\theoremstyle{plain}
\newtheorem{theorem}{Theorem}[section]
\newtheorem{corollary}[theorem]{Corollary}
\newtheorem{lemma}[theorem]{Lemma}
\newtheorem{proposition}[theorem]{Proposition}
\theoremstyle{definition}
\theoremstyle{remark}
\newtheorem{remark}{Remark}[section]
\numberwithin{equation}{section}
\newcommand{\RR}{\mathbb{R}}
\newcommand{\Rd}{\mathbb{R}^{d\times d}}
\newcommand{\Rds}{\mathbb{R}^{d\times d}_{{\rm sym}}}
\newcommand{\Rdsk}{\mathbb{R}^{d\times d}_{{\rm skw}}}
\newcommand{\Sph}{\mathbb{S}^2}
\newcommand{\tr}{{\rm tr}}
\newcommand{\dist}{{\rm dist}}
\newcommand{\Wvol}{W_{{\rm vol}}}
\newcommand{\ep}{\varepsilon}
\newcommand{\vph}{\varphi}
\newcommand{\Om}{\Omega}
\newcommand{\Wph}{W_h^{1,p}}
\newcommand{\Wp}{W^{1,p}(\Omega,\mathbb R^d)}
\newcommand{\pa}{\partial}
\newcommand{\na}{\nabla}
\begin{document}

\title[$\Gamma$-limits of multiwell energies]
{From nonlinear to linearized elasticity via $\Gamma$-convergence:
the case of multiwell energies satisfying weak coercivity conditions}
\author[V.~Agostiniani]{Virginia Agostiniani}
\address{Mathematical Institute\\
University of Oxford\\
24-29 St. Giles'\\
Oxford, OX1 3LB, UK
}
\email{Virginia.Agostiniani@maths.ox.ac.uk}

\author[T.~Blass]{Timothy Blass}
\address{
Center for Nonlinear Analysis\\
Department of Mathematical Sciences\\
Carnegie Mellon University\\
Pittsburgh, PA 15213, USA 
}
\email{tblass@andrew.cmu.edu}

\author[K.~Koumatos]{Konstantinos Koumatos}
\address{Mathematical Institute\\
University of Oxford\\
24-29 St. Giles'\\
Oxford, OX1 3LB, UK
}
\email{koumatos@maths.ox.ac.uk}

\thanks{V.~A. and K.~K. have received funding from the 
European Research Council under the 
European Union's Seventh Framework Programme (FP7/2007-2013) / ERC grant agreement ${\rm n^o}$ 291053.
T.~B. was supported by the NSF under the
PIRE Grant No. OISE-0967140. The authors would like to thank 
A. DeSimone for his contribution leading to the formulation of 
the problem and J. Kristensen for useful discussions about the 
Young measure representation.}

\begin{abstract} 
Linearized elasticity models are derived, via $\Gamma$-convergence, 
from suitably rescaled nonlinear energies when the corresponding energy 
densities have a multiwell structure and satisfy a weak coercivity 
condition, in the sense that the typical quadratic bound from below 
is replaced by a weaker $p$ bound, $1<p <2$, away from the wells. 
This study is motivated by, and our results are applied to, energies arising in 
the modeling of nematic elastomers.
\end{abstract}
\maketitle

\textsc{MSC (2010): 74G65, 49J45, 74B15, 76A15}

\keywords{Linearized elasticity, $\Gamma$-convergence, nematic elastomers}

\section{Introduction} 

Consider a homogeneous and hyperelastic body occupying in its 
reference configuration a bounded domain $\Om\subset\RR^d$. 
Deformations of the body are described by mappings $v:\Om\to\RR^d$, where 
$v(x)$ denotes the deformed position of the material point $x\in\Om$. 
The total elastic energy
corresponding to the deformation $v$ is given by
\[
\int_\Om W(\na v(x))dx,
\]
where $\na v\in\Rd$ is the deformation gradient and $W:\Rd\to\RR$ is 
a frame-indifferent energy density associated with the material. 
More generally, we consider energies of the form
\[
\int_\Om W(\na v(x))\,dx - \int_\Om l(x)\cdot u(x)\,dx,
\]
where $u(x)=v(x)-x$ is the displacement and $l(x)$ is an external 
(dead) load at $x\in\Om$, so that the term $\int_\Om l\cdot u\,dx$ 
accounts for the work performed by the applied loads.

Let us illustrate the idea behind the passage from
nonlinear to linearized elasticity.
Suppose that $W$ is $C^2$ near the identity, nonnegative 
(up to additive constants), and vanishing precisely on $SO(d)$. 
In the absence of external loads, the deformation $v(x)=x$ is 
an equilibrium state and it is natural to expect that small 
external loads $\ep l$ result in small displacements $\ep u$, 
where $\ep > 0$ is a small parameter. The associated energy 
then becomes
\begin{equation}
\label{eq:intro_energy}
\int_\Om W(I+\ep\na u(x))\,dx - \ep^2\int_\Om l(x)\cdot u(x)\,dx,
\end{equation}
where $I\in\Rd$ is the identity matrix.
 
Assuming that $u\in W^{1,\infty}(\Om,\RR^d)$ and 
rescaling \eqref{eq:intro_energy} by $1/\ep^2$, the limit as 
$\ep\to0$ yields
\begin{equation}
\label{eq:intro_linear}
\frac{1}{2}\int_\Om D^2W(I)[e(u)]^2\,dx - \int_\Om l(x)\cdot u(x)\,dx,
\end{equation}
where $e(u):=[\na u+(\na u)^T]/2$. Recall 
that the quadratic form $M\mapsto D^2W(I)[M]^2$ acts only on the symmetric part of $M$,
due to frame-indifference.

Note that this argument is restricted to $u\in W^{1,\infty}(\Om,\RR^d)$ 
and does not entail whether minimizers $u_\ep$ of the rescaled nonlinear energies
\begin{equation*}
\label{eq:intro_nonlinear}
\frac{1}{\ep^2}\int_\Om W(I+\ep\na u(x))\,dx - \int_\Om l(x)\cdot u(x)\,dx,
\end{equation*}
subject to suitable boundary data, actually converge to the 
minimizer of the limiting linearized\footnote{Note that the limiting energy density in \eqref{eq:intro_linear}
 is quadratic and corresponds to a linear stress--strain relation. 
 For multiwell energies this is not the case and one may only speak 
 of \textit{geometrically linear} models. Thus, the term linearized is 
 preferred over the term linear.} problem \eqref{eq:intro_linear}, 
under the same boundary data.  
The rigorous derivation of the linearized elastic formula \eqref{eq:intro_linear} 
from nonlinear elasticity was provided in~\cite{dal2002linearized} 
via $\Gamma$-convergence, under the condition
\begin{equation}
\label{eq:intro_2growth}
W(F)\geq c\,\dist ^2 (F, SO(d)).
\end{equation}

In this paper, we derive linearized models from nonlinear energies 
with a multiwell structure, i.e.~$W$ is minimized on a set $\mathcal{U}$ 
of the form $SO(d)U$, $U$ ranging in a compact subset of positive definite, symmetric matrices. 
Also, we weaken condition \eqref{eq:intro_2growth} (with $SO(d)$ 
replaced by $\mathcal{U}$) to
\begin{equation*}
\label{eq:intro_pgrowth}
W(F)\geq c\,\dist ^p (F, \mathcal{U}),
\end{equation*}
$1 < p < 2$, for $F$ away from $\mathcal{U}$; the coercivity 
remaining quadratic near $\mathcal{U}$. 

Energies of this type arise naturally in a large class of compressible 
models for rubber-like materials, including nematic elastomers, 
the latter being materials consisting of networks of polymer chains 
with embedded liquid crystalline molecules.
In \cite{DeTe}, some nonlinear compressible models for nematic 
elastomers are considered together with their formally derived 
small-strain theories.
These nonlinear models satisfy our assumptions and our results 
rigorously justify their geometrically linear 
counterparts (see Theorem~\ref{thm:nematic}).

In order to derive small-strain limiting theories, we introduce a 
small parameter $\ep$ and we consider a family of densities $\{W_\ep\}$ 
with corresponding energy wells 
\begin{equation}
\label{eq:intro_wells}
\mathcal U_\ep := SO(d)\{U_\ep = U^T_\ep = I + \ep U + o(\ep) : U\in\mathcal M\}, 
\end{equation}
$\mathcal M$ being a compact subset of symmetric matrices 
and $o(\ep)$ being uniform with respect to $U\in\mathcal M$. 
We assume that
\begin{equation}
\label{eq:intro_ep_growth}
W_\ep\geq c\,\dist ^2 (\cdot, \mathcal U_\ep)\ \mbox{ near }\mathcal U_\ep,\qquad
W_\ep\geq C\,\dist ^p (\cdot, \mathcal U_\ep)\ \mbox{ away from }\mathcal U_\ep,
\end{equation}
and we investigate the limiting behavior, as $\ep\to0$, of the rescaled functionals
\[
\mathscr{E}_\ep(u):=\frac{1}{\ep^2}\int_\Om W_\ep(I+\ep\na u(x))\,dx - \int_\Om l(x)\cdot u(x)\,dx
\]
and their (almost) minimizers. For a discussion on the choice 
of the various scalings, the reader is referred to \cite{Schmidt}.

In view of the coercivity assumption, the natural ambient 
space is $W^{1,p}(\Om,\RR^d)$, where one can prove 
equicoercivity of the functionals $\mathscr E_\ep$. This compactness, 
coupled with a $\Gamma$-convergence result, allows us to 
prove that, under suitable boundary data, the infima 
of $\mathscr E_\ep$ over $W^{1,p}(\Om,\RR^d)$ converge to the infimum of
\[
\mathscr E(u):=\int_\Om V(e(u(x)))dx - \int_\Om l(x)\cdot u(x) dx
\]
over $W^{1,2}(\Om,\RR^d)$, under the same boundary conditions. 
The linearized energy density $V$ is obtained as the limit $\ep\to0$ 
of the quantities $\ep^{-2}W(I+\ep \cdot)$, whenever this limit is 
uniform on compact subsets of symmetric matrices. Moreover, 
sequences of almost minimizers of the functionals $\mathscr E_\ep$ 
converge to a minimizer of the relaxation of $\mathscr E$ in 
$W^{1,p}(\Om,\RR^d)$. This is the content of Theorem~\ref{main_thm}.

We remark that the first attempt to rigorously justify the passage 
from nonlinear to linearized elasticity in the case of multiwell 
energy densities was due to B.~Schmidt in \cite{Schmidt}, 
where the author assumes the standard quadratic coercivity condition 
(corresponding to \eqref{eq:intro_ep_growth} with $p=2$)
\footnote{Note that in \cite{Schmidt} the author 
also assumes that the set $\mathcal M$ appearing in \eqref{eq:intro_wells} 
consists of a finite number of symmetric matrices. However, the same results 
extend to the case of a compact set $\mathcal M$ 
without changing the proofs.}. 
In the same 
paper Schmidt applies his results to discuss the validity of the 
so-called KRS model \cite{khachaturyan_book} for crystalline 
solids which can be thought of as a formal linearization of nonlinear 
theories for solid-to-solid phase transitions (see \cite{bj92} and 
\cite{kohn1991}). The theory developed in \cite{Schmidt} was 
later applied in \cite{AgDe} to justify certain linearized models 
for nematic elastomers with quadratic growth. To include other 
natural compressible models for nematic elastomers, we extend 
the results of \cite{Schmidt} to the case $1 < p < 2$.
Some of the proofs rely on techniques 
introduced in \cite{AgDalDe} where the case of single well 
energies satisfying the weak coercivity condition is treated. 

The paper is organized as follows: in Section \ref{sec:main}, 
we introduce all the ingredients and state our main results. 
The models for nematic elastomers under consideration are 
described in detail in Section~\ref{sec:application} where the 
results of Section~\ref{sec:main} are applied. Section~\ref{sec:cpt} 
is devoted to the proofs of the main statements.
In Section~\ref{sec:equiint} we prove that it is
possible to provide a Young measure representation for the limiting functional, as
well as prove the strong convergence of sequences of almost minimizers under strong 
convexity assumptions on the limiting density $V$.
The paper concludes with an Appendix where various already 
established results are gathered, along with their proofs, for the convenience of the reader. 


\section{Main results}\label{sec:main}
The sets of matrices we work with are $\Rd$ ($d\times d$ real matrices),
$\Rds$ (symmetric matrices), $SO(d)$ (rotations).  
Here and throughout, $c>0$ denotes a generic constant which might differ in each instance. 
We denote by $id$ the identity function on $\RR^d$ and by $I\in\Rd$ the identity matrix.
For every $F\in\Rd$, ${\rm sym}\,F:=\frac{F+F^T}2$.

Let $W_{\ep}:\Rd\rightarrow\overline{\RR}:=\RR\cup\left\{\infty\right\}$ be a 
family of frame-indifferent multiwell energy densities with a corresponding set of wells
$\mathcal U_\ep$ given by \eqref{eq:intro_wells}.
Note that the matrices $U_\ep\in\mathcal U_\ep$ are positive definite
for every $\ep$ small enough.
We also assume that the energies $W_\ep$ are measurable, continuous in an $\ep$-independent 
neighborhood of the identity and satisfy the following coercivity condition:
\begin{equation}
\label{eq:coercivity}
W_\ep(F)\geq c g_p({\rm dist}(F,\mathcal{U}_\ep)),
\end{equation} 
for all $F\in\mathbb{R}^{d\times d}$ and for a constant $c>0$ independent of $\ep$, 
where for some $1<p\leq 2$, $g_p:[0,\infty)\rightarrow\mathbb{R}$ is given by:
\begin{equation}
\label{eq:gp}
g_p(t)=\left\{\begin{array}{ll}\frac{t^2}{2},& t\in[0,1]\\
\,&\,\\
\frac{t^p}{p}+\left(\frac{1}{2}-\frac{1}{p}\right),& t\in[1,\infty).\end{array}\right.
\end{equation}
To retain physicality, though not required for the proofs, we impose the additional condition that
\begin{align*}
\label{eq:growth}
W_{\ep}(F)&\rightarrow +\infty,\quad\mbox{as }\ \det F\rightarrow 0, \\
W_{\ep}(F)&=+\infty,\quad\mbox{if }\ \det F\leq 0.
\end{align*}

The reference configuration is represented by a bounded and Lipschitz domain $\Om\subset\RR^d$ and,
to incorporate the boundary data, we fix $h\in W^{1,\infty}(\Om,\RR^d)$, a subset $\pa_D\Om\subseteq\pa\Om$ of
positive surface measure, and introduce for every $1<p\leq 2$ the set 
\begin{equation}\label{Wph}
\Wph:=\{u\in W^{1,p}(\Om,\RR^d):Tu=Th\mbox{ on }\pa_D\Om\},
\end{equation}
where $T$ stands for the trace operator. We require $\partial_D\Om$
to have \emph{Lipschitz boundary} in $\partial\Om$ according to \cite[Definition 2.1]{AgDalDe}.
This condition implies that $\Wph$ agrees with the closure of $W^{1,\infty}_h$ in 
$W^{1,p}(\Om,\RR^d)$ (see \cite[Proposition A.2]{AgDalDe}). We use this equivalence in 
the proof of Theorem~\ref{thm:gammaconvergence} below.

A continuous and linear functional $\mathscr L:\Wp\to\RR$,
with $p$ as in (\ref{eq:coercivity}), represents the applied loads.
It is in principle a function of the deformation $v$, but
it enters the expression of the total energy
of the system only as $\mathcal L(u)$, where $u(x)=v(x) - x$ is the displacement
associated with the deformation $v$. This is because the total energy can be renormalized
by $-\mathscr L(id)$, in view of the linearity of $\mathscr L$.    

The problem under investigation is to understand the behavior, as $\ep\to0$, 
of the infimum of the total energy appropriately
rescaled by $1/\ep^2$:
$$
\frac{1}{\ep ^2}\int_{\Omega}W_\ep(I+\ep\nabla u)dx-\mathscr L(u)
$$
subject to the boundary data $h$.
The analysis is thus based on the rescaled quantities $W_\ep(I+\ep F)/\ep^2$, whose limit as $\ep\to0$
depends only on the symmetric part of $F$, due to frame indifference. 
Thus, we consider the rescaled densities
\begin{equation}\label{lin_lim_1}
V_\ep(E):=\frac{1}{\ep ^2}W_\ep(I+\ep E),
\end{equation}
defined for every $E\in\Rds$,
or, equivalently, their extensions $f_\ep:\mathbb{R}^{d\times d}\rightarrow\RR$ given by
\begin{equation}\label{eq:fep}
f_\ep(F):=V_\ep({\rm sym}\,F).
\end{equation}
Assume that $V_\ep\rightarrow V$ uniformly on compact subsets of $\Rds$ for some $V:\Rds\rightarrow\RR$. 
Note that this is equivalent to asking that 
$f_\ep\to f$ uniformly on compact subsets of $\Rd$ where $f:\Rd\rightarrow\RR$ is the extension of $V$ given by
\begin{equation}\label{eq:fonly}
f(F):=V({\rm sym}\,F).
\end{equation}
Also, we remark that $V$ satisfies the growth condition
\begin{equation*}
\label{eq:vgrowth}
V(E)\leq c(1+\vert E\vert ^2)
\end{equation*}
if and only if $f(F)\leq c(1+\vert F\vert^2)$. 

Observe that, in view of the growth condition (\ref{eq:coercivity}) and 
Lemma \ref{lem:dist}, if $V(E)=0$ then $E\in\mathcal M$, where $\mathcal M$ 
appears in definition (\ref{eq:intro_wells}).
The following theorem  is our main result.

\begin{theorem}
\label{main_thm}
Let $1<p\leq 2$, suppose that $f_\ep\rightarrow f$ uniformly on compact subsets of $\Rd$, 
and that  $f$ satisfies $0\leq f(F)\leq c(1+\vert F\vert^2)$ for every $F\in\Rd$ and some constant $c>0$. 
If
$$
m_{\ep}:=\inf_{u\in\Wph}\left\{\frac{1}{\ep ^2}\int_{\Omega}W_\ep(I+\ep\nabla u)dx-\mathscr L(u)\right\},
$$
and if $\{u_{\ep}\}$ is a sequence such that
\begin{equation}\label{almost_min}
\lim_{\ep\to0}\left\{\frac{1}{\ep ^2}\int_{\Omega}W_\ep(I+\ep\nabla u_\ep)dx-\mathscr L(u_{\ep})\right\}
=\lim_{\ep\to0}m_{\ep},
\end{equation}
then, up to a subsequence, $u_{\ep}\rightharpoonup u$ in $\Wp$, where $u$ is a solution to the minimum problem
\begin{equation}\label{eq:min_pb}
m:=\min_{u\in W_h^{1,2}}\left\{\int_{\Om}f^{qc}(\na u)dx-\mathscr L(u)\right\}.
\end{equation}
Moreover, $m_{\ep}\to m$.
\end{theorem}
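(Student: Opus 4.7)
The plan is to prove the theorem via the fundamental theorem of $\Gamma$-convergence: combining equicoercivity of $\{\mathscr E_\ep\}$ in the weak topology of $\Wp$ with the $\Gamma$-convergence of $\mathscr E_\ep$ to the functional $u\mapsto \int_\Om f^{qc}(\na u)\,dx - \mathscr L(u)$ on $\Wph$ (where $f^{qc}$ is the quasiconvex envelope of the limit density $f$ defined in~(\ref{eq:fonly})) simultaneously yields convergence of (almost) minimizers and convergence of the infima $m_\ep\to m$.

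First I would establish equicoercivity. From (\ref{eq:coercivity}) and the definition of $g_p$, evaluating at $F=I+\ep\na u_\ep$ and rescaling by $1/\ep^2$ gives, after using the expansion (\ref{eq:intro_wells}) to compare $\dist(I+\ep\na u_\ep,\mathcal U_\ep)$ with $\ep\,\dist(\na u_\ep,\mathcal M)$, a lower bound of the form $\mathscr E_\ep(u_\ep)\geq c\int_\Om |\na u_\ep|^p\,dx - C - \mathscr L(u_\ep)$. Absorbing the loads using continuity of $\mathscr L$ and a Poincaré inequality on $\Wph$ (valid thanks to the Lipschitz-boundary hypothesis on $\pa_D\Om$), together with (\ref{almost_min}), produces a uniform bound $\|u_\ep\|_{W^{1,p}}\leq C$. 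Hence $u_\ep\rightharpoonup u$ in $\Wp$ along a subsequence, with $u\in\Wph$ by weak continuity of the trace.

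The $\Gamma$-limsup is the easier half: one uses the density of $W^{1,\infty}_h$ in $\Wph$ (noted after~(\ref{Wph})) to approximate any target $v\in W^{1,2}_h$ by Lipschitz competitors, for which a recovery sequence is constructed via the lamination/quasiconvexification argument of \cite{Schmidt}, exploiting uniform convergence $f_\ep\to f$ on compact sets to pass to the limit.

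The main technical obstacle is the $\Gamma$-liminf in the weakly-coercive regime $p<2$: weak $\Wp$-compactness alone does not ensure $e(u)\in L^2$, yet this is required by the quadratic growth of $f^{qc}$. To overcome this I would follow the decomposition method of \cite{AgDalDe}, splitting $\Om$ into the ``good'' set on which $\dist(I+\ep\na u_\ep,\mathcal U_\ep)\leq 1$ and its ``bad'' complement. On the good set the quadratic branch of $g_p$, combined with a geometric-rigidity/polar-decomposition argument, produces a rotation field $R_\ep\to I$ and an $L^2$-bounded renormalized symmetric strain; on the bad set the $p$-branch of $g_p$ forces the measure to vanish as $\ep\to 0$. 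Uniform convergence $f_\ep\to f$ on compact sets, coupled with a Young measure representation to capture oscillations between wells, then yields the sharp lower bound $\liminf \mathscr E_\ep(u_\ep)\geq \int_\Om f^{qc}(\na u)\,dx - \mathscr L(u)$ after quasiconvexification; combined with the equicoercivity and the $\Gamma$-limsup, this identifies $u$ as a minimizer of (\ref{eq:min_pb}) and proves $m_\ep\to m$.
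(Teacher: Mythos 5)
Your high-level strategy (equicoercivity $+$ $\Gamma$-convergence $\Rightarrow$ convergence of minimizers and of minimum values via the fundamental theorem of $\Gamma$-convergence) is exactly the paper's strategy, so the architecture of the proposal is sound. However, your sketch of the equicoercivity step contains a genuine gap, and it is the crux of the whole argument in the $p<2$ regime.

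You claim that from the coercivity condition \eqref{eq:coercivity} one can ``compare $\dist(I+\ep\na u_\ep,\mathcal U_\ep)$ with $\ep\,\dist(\na u_\ep,\mathcal M)$'' and thereby obtain a pointwise lower bound leading to $\mathscr E_\ep(u)\geq c\int_\Om|\na u|^p\,dx-C$. This comparison is false. The wells $\mathcal U_\ep$ contain the full group $SO(d)$ as a factor, so the coercivity only sees the symmetric part of the strain: if, for instance, $\na u_\ep=(R-I)/\ep$ for a fixed rotation $R\neq I$, then $I+\ep\na u_\ep=R$ and $\dist(R,\mathcal U_\ep)=\dist(I,\mathcal U_\ep)=O(\ep)$, so $g_p(\dist(I+\ep\na u_\ep,\mathcal U_\ep))=O(\ep^2)$, while $\ep^p|\na u_\ep|^p=|R-I|^p$ is bounded away from zero. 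No pointwise inequality between the two quantities can hold. Passing from integral control of $\dist(\na v,SO(d))$ (where $v=id+\ep u$) to integral control of $|\na u|^p$ is precisely the content of the geometric rigidity theorem of Friesecke--James--M\"uller, in the $g_p$-modified form of \cite[Lemma~3.1]{AgDalDe}: one first replaces the minimizing rotation field by a single constant rotation $R_\ep$, and then uses the boundary data together with \cite[Lemma~3.3]{AgDalDe} to show $|R_\ep-I|=O(\ep)$. Without this rigidity input, the equicoercivity bound (Proposition~\ref{prop:compactness}) does not follow, and the whole scheme collapses.

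For the $\Gamma$-liminf you correctly point the finger at the conflict between weak $W^{1,p}$-compactness and the quadratic growth of $f^{qc}$, and you cite the right reference. Two remarks on the route the paper actually takes, which is slightly different from the one you outline: the decomposition is not the sublevel set $\{\dist(I+\ep\na u_\ep,\mathcal U_\ep)\leq 1\}$ but the set $B_j=\{|\na u_j|\leq\ep_j^{-1/2}\}$, whose complement has vanishing measure and on which the renormalized polar strain is $L^2$-bounded (Lemma~\ref{lemma:bj}); and the sharp lower bound is obtained not through a Young-measure representation but through Kristensen's monotone approximation of a quadratic-growth quasiconvex integrand by quasiconvex functions with linear growth at infinity (Lemma~\ref{lemma:kristensen}), combined with the $\delta|F|^2$-perturbation trick. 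The Young-measure representation appears in the paper only as a separate refinement in Section~\ref{sec:equiint}, not as an ingredient of the $\Gamma$-convergence proof.
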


The integrand $f^{qc}$ obtained in the limit is the \emph{quasiconvexification} of $f$.
The corresponding notion for $V$ is the \emph{quasiconvexification on linear strains}
(see Subsection \ref{subsection:quasi} for definitions), and we denote it by $V^{qce}$.
Although we have the equality $f^{qc}(F)=V^{qce}({\rm sym}\,F)$ for every $F\in\Rd$ (see Proposition \ref{prop:qcqce}),
we prefer to retain both the notation $f^{qc}$ and $V^{qce}$, because while our 
proofs seem more natural in terms of $f^{qc}$, some results are more easily stated in terms 
of $V^{qce}$. 

\begin{remark}
Note that condition \eqref{eq:coercivity} and the definition of $V$ as 
the uniform limit on compact subsets of $\Rds$ of $V_\ep$ in \eqref{lin_lim_1},
yields the existence of constants $C_1,C_2 >0$ such that
\begin{equation*}
  V(E) \geq C_1 |E|^2 - C_2.
\end{equation*}
Indeed, for $\ep$ sufficiently small, ${\rm dist} (I + \ep E,\mathcal{U}_\ep)\leq 1$ 
so that by the definition of $g_p$ 
\begin{align*}
  V(E) & \geq 
\limsup_{\ep \to 0}\frac{c}{2\ep^2} {\rm dist}^2(I + \ep E,\mathcal{U}_\ep)\\
&\geq  C_1\lim_{\ep \to 0}\frac{1}{\ep^2}{\rm dist}^2(I+\ep E, SO(d)) -C_2=C_1 |E|^2-C_2,
\end{align*}
where in the last equality we have used \eqref{eq:limdSOd}.
\end{remark}

The proof of Theorem \ref{main_thm} is based on two intermediate results: a compactness result
following from equicoercivity, and a $\Gamma$-convergence result. In order to state them, 
we define the approximate functionals $\mathscr E_{\ep}:\Wp\rightarrow\overline{\RR}$ by
\begin{equation}
\label{eq:Eepsilon}
\mathscr E_{\ep}(u):=
\left\{
\begin{array}{ll}\frac{1}{\ep ^2}\int_{\Omega}W_\ep(I+\ep\nabla u)\,dx,& u\in W^{1,p}_h\\
\,& \,\\
+\infty ,& \mbox{otherwise},\end{array}\right.
\end{equation}
and the limiting functional $\overline{\mathscr E}:\Wp\rightarrow\overline\RR$ by
\begin{equation}
\label{eq:Erel}
\overline{\mathscr E}(u):=\left\{\begin{array}{ll}\int_{\Omega} f^{qc}(\nabla u)\,dx, & u\in W^{1,2}_h\\
\,&\,\\
+\infty ,& \mbox{otherwise.}\end{array}\right.
\end{equation}

\begin{proposition}[Equicoercivity]
\label{prop:compactness}
Let $1\leq p\leq 2$. There exists a constant $C=C(\Om,p,\pa_D\Om,h)>0$ such that
\begin{equation*}
\int_{\Om}|\na u|^pdx\leq C(1+\mathscr E_\ep(u))
\end{equation*}
for every $u\in\Wph$ and every $\ep$ sufficiently small.
\end{proposition}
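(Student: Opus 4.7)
The plan is to combine the coercivity \eqref{eq:coercivity} with the $L^p$-version of the Friesecke--James--M\"uller geometric rigidity theorem, and then exploit the Dirichlet condition $u=h$ on $\pa_D\Om$ to control the rotation produced by rigidity. My first task is to convert the energy bound into a sharp $L^p$ estimate on $\dist(I+\ep\na u,\mathcal U_\ep)$ in terms of $\ep$. Splitting $\Om$ according to whether $\dist(I+\ep\na u,\mathcal U_\ep)\leq 1$ or $>1$ and using respectively the quadratic branch $W_\ep\geq (c/2)\dist^2$ and the power branch $W_\ep\geq c'\dist^p$ of \eqref{eq:coercivity}, together with H\"older's inequality (exponents $2/p$ and $2/(2-p)$) on the small-distance set and Young's inequality $a^{p/2}\leq a+1$, I arrive at
\begin{equation*}
\int_\Om \dist^p(I+\ep\na u,\mathcal U_\ep)\,dx \,\leq\, C\ep^p\bigl(\mathscr E_\ep(u)+1\bigr).
\end{equation*}
The structure \eqref{eq:intro_wells} together with compactness of $\mathcal M$ yields $d_H(\mathcal U_\ep,SO(d))\leq C\ep$ for small $\ep$, so the same estimate holds with $SO(d)$ in place of $\mathcal U_\ep$.

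Next, I apply the $L^p$-version of the Friesecke--James--M\"uller rigidity theorem to $v(x):=x+\ep u(x)\in\Wp$: there exists a constant matrix $R_\ep\in SO(d)$ such that
\begin{equation*}
\int_\Om |I+\ep\na u-R_\ep|^p\,dx \,\leq\, C\int_\Om \dist^p(I+\ep\na u,SO(d))\,dx \,\leq\, C\ep^p(\mathscr E_\ep(u)+1).
\end{equation*}
Decomposing $\ep\na u=(I+\ep\na u-R_\ep)+(R_\ep-I)$ and integrating $L^p$-norms, the desired bound $\int_\Om|\na u|^p\,dx\leq C(1+\mathscr E_\ep)$ reduces to the matrix estimate $|R_\ep-I|^p\leq C\ep^p(\mathscr E_\ep+1)$.

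This last step is where the boundary datum enters, and is the principal obstacle. Consider the auxiliary map $\hat u(x):=R_\ep^T v(x)-x=(R_\ep^T-I)x+\ep R_\ep^T u(x)$, which satisfies $\|\na\hat u\|_{L^p(\Om)}^p\leq C\ep^p(\mathscr E_\ep+1)$ thanks to rigidity. The Poincar\'e--Wirtinger inequality applied to $\hat u-c_\ep$ (with $c_\ep\in\RR^d$ the mean of $\hat u$ over $\Om$) together with the trace theorem gives $\|\hat u-c_\ep\|_{L^p(\pa_D\Om)}^p\leq C\ep^p(\mathscr E_\ep+1)$. Since $\hat u=(R_\ep^T-I)x+\ep R_\ep^T h$ on $\pa_D\Om$ and $h\in W^{1,\infty}$, a triangle inequality produces
\begin{equation*}
\int_{\pa_D\Om}|(R_\ep^T-I)x-c_\ep|^p\,dS\,\leq\, C\ep^p(\mathscr E_\ep+1).
\end{equation*}
The argument is then closed by a non-degeneracy lemma: there exists $c_0=c_0(\Om,\pa_D\Om,p)>0$ such that $\inf_{c\in\RR^d}\int_{\pa_D\Om}|(R-I)x-c|^p\,dS\geq c_0|R-I|^p$ uniformly for $R\in SO(d)$. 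This holds because $\mathrm{rank}(R-I)\geq 2$ for every $R\in SO(d)\setminus\{I\}$, so the affine map $x\mapsto (R-I)x-c$ cannot vanish on any subset of $\pa\Om$ of positive $(d-1)$-surface measure, while compactness of $SO(d)$ upgrades pointwise positivity to the claimed uniform bound. The main difficulty is precisely this non-degeneracy step and the careful propagation of the sharp scaling $O(\ep^p(\mathscr E_\ep+1))$ through the Poincar\'e--trace chain, since the cruder $O(\ep^2\mathscr E_\ep+1)$ bound is insufficient to close the argument when $p<2$.
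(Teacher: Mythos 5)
Your argument is essentially correct for $1<p\leq 2$, but it departs from the paper's proof in two substantive ways and leaves a small gap at $p=1$. The paper never passes to a pure $L^p$ estimate: it keeps the mixed-growth function $g_p$ throughout, proves $\int_\Om g_p(\dist(\nabla v,SO(d)))\,dx\leq c\ep^2(\mathscr E_\ep(u)+1)$, applies the $g_p$-adapted rigidity result (the modified Friesecke--James--M\"uller lemma cited from \cite[Lemma 3.1]{AgDalDe}), obtains the rotation bound $|R_\ep-I|^2\leq c\ep^2(\mathscr E_\ep(u)+1)$ by citing \cite[Lemma 3.3]{AgDalDe}, and only at the very end converts $\int_\Om g_p(|\ep\nabla u|)\,dx\lesssim\ep^2(\cdots)$ into the $L^p$ bound via H\"older and Lemma~\ref{lemma:gp}. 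You instead degrade immediately to $\int\dist^p\leq C\ep^p(\mathscr E_\ep+1)$ (using $\ep^2\leq\ep^p$ for $p\leq 2$ on the large-distance set) and invoke the plain $L^p$ rigidity theorem. This buys you a more self-contained proof of the rotation bound (your Poincar\'e--trace--nondegeneracy chain is essentially an in-line re-proof of \cite[Lemma 3.3]{AgDalDe}), but it costs you the $p=1$ endpoint: geometric rigidity in $L^1$ is known to fail, whereas the $g_p$-version used in the paper (quadratic near $SO(d)$) remains valid for all $1\leq p\leq 2$, matching the statement. Your final remark is also off: the quadratic bound $|R_\ep-I|^2\leq c\ep^2(\mathscr E_\ep+1)$ that the paper obtains is in fact \emph{stronger} than what you need, since raising it to the power $p/2$ and using $(\mathscr E_\ep+1)^{p/2}\leq\mathscr E_\ep+1$ gives precisely $|R_\ep-I|^p\leq C\ep^p(\mathscr E_\ep+1)$; so the ``cruder'' bound would have sufficed to close your argument, and your nondegeneracy lemma is not really the principal obstacle. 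Finally, the compactness step in that lemma needs a line of care: the normalized directions $(R-I)/|R-I|$ do not form a compact set on their own as $R\to I$, and you must pass to the closure, which adjoins the unit skew-symmetric matrices (still of rank $\geq 2$, so positivity of the infimum survives). With that adjustment and the restriction $p>1$, the argument goes through.
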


This result allows us to deduce that, in the case $1<p\leq2$, 
if we have a sequence $\{u_{\ep}\}$ of almost minimizers, that is 
$\{u_{\ep}\}$ satisfies (\ref{almost_min}), then, up to a subsequence, 
$u_{\ep}\rightharpoonup u\in\Wph$. By standard $\Gamma$-convergence arguments, 
Theorem \ref{thm:gammaconvergence} below then implies that $u$ is indeed a solution of the
minimum problem (\ref{eq:min_pb}). 
 
\begin{theorem}[$\Gamma$-convergence]
\label{thm:gammaconvergence}
Let $1<p\leq 2$. Under the hypotheses of Theorem \ref{main_thm}, 
the sequence of functionals $\{\mathscr E_\ep\}$ $\Gamma$-converges to $\overline{\mathscr E}$ 
with respect to the weak topology of $W^{1,p}(\Om,\RR^d)$.
\end{theorem}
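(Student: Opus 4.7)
The plan is to establish the two $\Gamma$-convergence inequalities separately, adapting Schmidt's scheme~\cite{Schmidt} for $p=2$ and importing the truncation techniques of~\cite{AgDalDe} to handle $1 < p < 2$. A preliminary observation that enters both halves is that, by frame-indifference and polar decomposition,
\[
\tfrac{1}{\ep^{2}}W_\ep(I+\ep F)=V_\ep\bigl({\rm sym}\,F+\ep\,r_\ep(F)\bigr)
\]
with $r_\ep(F)$ bounded uniformly on compact subsets of $\Rd$; combined with the hypothesis $V_\ep\to V$ uniformly on compacts, this yields $\tfrac{1}{\ep^{2}}W_\ep(I+\ep\,\cdot\,)\to f$ uniformly on compact subsets of $\Rd$.

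For the $\liminf$ inequality, I would take $u_\ep\rightharpoonup u$ weakly in $\Wp$ with $\liminf_\ep\mathscr E_\ep(u_\ep)<\infty$, so that by Proposition~\ref{prop:compactness} $\{\na u_\ep\}$ is bounded in $L^p$. For each $\lambda>0$ I would apply a Kirchheim--Kristensen-type truncation adapted to the Dirichlet condition (via the density of $W^{1,\infty}_h$ in $\Wph$ recorded in Section~\ref{sec:main}) to produce $v_\ep^\lambda\in W^{1,\infty}_h$ with $\|\na v_\ep^\lambda\|_\infty\leq c\lambda$ and $|\{v_\ep^\lambda\neq u_\ep\}|\leq c\lambda^{-p}$. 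Since $\ep\|\na v_\ep^\lambda\|_\infty\to 0$, the uniform convergence above delivers
\[
\mathscr E_\ep(u_\ep)\geq \int_\Om f(\na v_\ep^\lambda)\,dx - o_\ep(1) - \text{(bad-set error)},
\]
where the bad-set error is to be absorbed into a joint passage to the limit in $\ep$ and $\lambda$. Replacing $f$ by $f^{qc}$, invoking weak lower semicontinuity of $w\mapsto\int_\Om f^{qc}(\na w)\,dx$ along the $W^{1,2}$-bounded sequence $v_\ep^\lambda$ (with weak limit $v^\lambda$), and finally letting $\lambda\to\infty$ using $v^\lambda\to u$ a.e., would yield $u\in W^{1,2}_h$ together with the desired inequality.

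For the $\limsup$ inequality, by density of $W^{1,\infty}_h$ in $W^{1,2}_h$ and continuity of the limit functional along such approximations, it suffices to construct a recovery sequence for each $u\in W^{1,\infty}_h$. I would combine Dacorogna's quasiconvexification formula with interior perturbations: select $\phi_k\in W^{1,\infty}_0(\Om,\RR^d)$ of uniformly bounded gradient with $\int_\Om f(\na u+\na\phi_k)\,dx\to\int_\Om f^{qc}(\na u)\,dx$, and take the constant-in-$\ep$ sequence $u+\phi_k\in W^{1,\infty}_h$, whose gradients lie in a fixed compact set. The uniform convergence of $\tfrac{1}{\ep^{2}}W_\ep(I+\ep\,\cdot\,)$ to $f$ then gives $\mathscr E_\ep(u+\phi_k)\to\int_\Om f(\na u+\na\phi_k)\,dx$, and a diagonal argument in $(\ep,k)$ delivers a recovery sequence in $\Wph$ weakly converging to $u$.

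The main obstacle is expected in the $\liminf$ step when $p<2$: the coercivity only bounds $\na u_\ep$ in $L^p$, whereas $f^{qc}$ carries quadratic growth, so the truncation bad set, though of small measure, can a priori produce a large deficit in the energy comparison. A calibrated choice $\lambda=\lambda(\ep)\to\infty$ must simultaneously make the uniform-convergence error small on the good set, force $|\{v_\ep^\lambda\neq u_\ep\}|\to 0$, and allow the family of weak limits $\{v^\lambda\}$ to upgrade the regularity of $u$ from $W^{1,p}_h$ to $W^{1,2}_h$; this balancing is the technical heart of the proof and is precisely where the analysis of~\cite{AgDalDe} becomes decisive.
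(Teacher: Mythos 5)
Your $\Gamma$-$\limsup$ strategy matches the paper's: reduce to $u\in W^{1,\infty}_h$ via strong-$W^{1,2}$ continuity of $\overline{\mathscr E}$ and density of $W^{1,\infty}_h$, use the uniform convergence $\ep^{-2}W_\ep(I+\ep\cdot)\to f$ on compacts to treat the constant sequence, and relax. One detail is off: for a general Lipschitz $u$, the Dacorogna formula \eqref{eq:dac} only yields pointwise information at a single matrix; to produce $\phi_k$ with $\int_\Om f(\na u+\na\phi_k)\,dx\to\int_\Om f^{qc}(\na u)\,dx$ you actually need the Acerbi--Fusco relaxation theorem (the paper's Theorem~\ref{thm:relaxation}) or, as the paper does, the abstract fact that the upper $\Gamma$-limit $\mathscr E''$ is sequentially weak$^\ast$ l.s.c.\ and hence dominated by $\overline{\mathscr F}$.

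The genuine gap is in the $\liminf$ inequality, and you have in fact named it yourself: with a Lipschitz truncation $v_\ep^\lambda$ at level $\lambda$, the bad-set error scales like $\lambda^{-p}\cdot\sup_{|F|\lesssim\lambda}f(F)\sim\lambda^{2-p}\to\infty$ when $p<2$, and no calibration $\lambda=\lambda(\ep)$ removes this tension: you need $\lambda\to\infty$ to recover $u$ and its $W^{1,2}$ regularity, but every growth in $\lambda$ aggravates the deficit. Flagging the obstacle is not the same as resolving it. The paper's resolution uses two ingredients you do not invoke, and which together make a \emph{sequential} rather than a joint passage to the limit possible. First, from the $g_p$-coercivity (quadratic near the wells) and the modified rigidity estimate, Lemma~\ref{lemma:bj}(ii) gives an $L^2$ bound on $1_{B_j}\na u_j$ and the weak $L^2$-convergence $1_{B_j}\na u_j\rightharpoonup\na u$, with $B_j=\{|\na u_j|\le\ep_j^{-1/2}\}$; this is what upgrades $u$ to $W^{1,2}_h$ and is not available from a pure $L^p$ truncation. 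Second, and decisively, the paper does not compare against $f$ or $f^{qc}$ directly but against Kristensen's approximation (Lemma~\ref{lemma:kristensen}): a nondecreasing sequence of quasiconvex $\psi_k\le f^{qc}+\delta|\cdot|^2$ that agree with an affine function of $|F|$ for $|F|\ge r_k$, hence have \emph{linear} growth at infinity. With $\psi_k$ in hand, the contribution of $B_j^c$ is bounded by $\tilde a_k|B_j^c|^{(p-1)/p}\|\na u_j\|_p+\tilde b_k|B_j^c|$, which vanishes as $j\to\infty$ for each fixed $k$; weak lower semicontinuity of $w\mapsto\int_\Om\psi_k(\na w)\,dx$ in $W^{1,1}$ then closes the $j$-limit, monotone convergence handles $k\to\infty$, and finally $\delta\to0$. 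Without the linear-growth approximants (or some equivalent device), the proposal as written does not produce the $\liminf$ inequality for $1<p<2$.
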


To obtain this result, the requirement $h\in W^{1,\infty}(\Om,\RR^d)$ is sharp in the sense that
there are some particular $h\in W^{1,q}(\Om,\RR^d)$, with $2\leq q<\infty$ such that
the $\Gamma$-convergence does not hold unless the energy densities 
satisfy suitable bounds from above which are not natural in this context
(see \cite[Remark 2.7]{AgDalDe}).

\begin{remark}[Relaxation]
\label{rmk:relaxation}
Notice that from Theorem \ref{thm:gammaconvergence} we have also obtained, as a by-product, 
that $\overline{\mathscr E}$ is the sequentially weak lower semicontinuous envelope
in $\Wp$ of the functional $\mathscr E:\Wp\rightarrow\overline\RR$ defined by
\begin{equation*}
\label{eq:Elimit}
\mathscr E(u):=
\left\{
\begin{array}{ll}\int_{\Omega} f(\nabla u)\,dx, & u\in W^{1,2}_h\\
\,&\,\\
+\infty ,& \mbox{otherwise.}\end{array}\right.
\end{equation*}

Indeed, by standard $\Gamma$-convergence results, the functional $\overline{\mathscr E}$ is
weakly lower semicontinuous in $W^{1,p}(\Om,\RR^d)$. Hence, whenever $u_j\rightharpoonup u$ in 
$W^{1,p}(\Om,\RR^d)$, we have that
\[
\overline{\mathscr E}(u)\leq\liminf_{j\to\infty}\overline{\mathscr E}(u_j)\leq
\liminf_{j\to\infty}\mathscr E(u_j),
\]
since $\overline{\mathscr E}\leq\mathscr E$. On the other hand, to prove the existence of a 
relaxing sequence it is enough to consider $u\in W^{1,2}_h$ and then 
standard relaxation results (see Theorem \ref{thm:relaxation})
for the functional $\mathscr E$ restricted to $W^{1,2}(\Om,\RR^d)$ provide the required sequence.
In particular, we have that $\min_{W^{1,2}_h}\overline{\mathscr E}=\inf_{W^{1,2}_h}\mathscr E$.
Note also that $\overline{\mathscr E -\mathscr L} = \overline{\mathscr E} - \mathscr L$.
\end{remark}

Next we present Corollary \ref{cor:lowen}, analogous to \cite[Corollary~2.8]{Schmidt}. We begin by 
introducing some notation. 
Let  $\mathcal Q$ denote the set of quasiconvex functions
from $\Rd$ to $\RR$ and for $1\leq q<\infty$ let $\mathcal Q_q$ denote the set of functions $f\in\mathcal Q$
such that $0\leq f(F)\leq C(1+|F|^q)$ for every $F$ and some $C>0$.
For a compact set $K\in \Rd$, the \emph{strong $q$-quasiconvex hull} of $K$ is
\begin{equation}
  \label{Khulls}
   {\bf Q}_qK: = \left\{F\in \Rd \, : \,f(F)\leq \sup_{G\in K}f(G)
\mbox{ for every }f\in \mathcal Q_q \right\}.
\end{equation}
Recall that the \emph{quasiconvex hull} $QK$ of $K$ is defined as the right-hand side of (\ref{Khulls})
with $\mathcal Q$ in place of $\mathcal Q_q$.
On the other hand, the \emph{weak $q$-quasiconvex hull} $Q_q K$ of $K$ is 
 the zero-level set of the quasiconvexification of the function $F\mapsto{\rm dist}^q(F,K)$. 
Finally, we define the sets
$Q^eK$, ${\bf Q}^e_qK$, and $Q^e_q K$ analogously 
in terms of quasiconvexity on linear strains. 

\begin{corollary}\label{cor:lowen}
Suppose $\partial_D \Omega = \partial \Omega$, and $h(x) = Fx$
for a fixed $F\in \Rd$. If $\{u_\ep \} \subset W^{1,p}_h$ satisfies
\begin{equation*}
  \label{lowen}
  \liminf_{\ep \to 0}\left\{\frac1{\ep^2}\int_{\Om}W_{\ep}(I+\ep\na u_{\ep})dx\right\}=0,
\end{equation*}
then ${\rm sym}\,F \in\{V^{qce}=0\}\subseteq Q^e_2\mathcal M$. 
\end{corollary}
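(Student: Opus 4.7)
The plan is to combine Proposition~\ref{prop:compactness} and Theorem~\ref{thm:gammaconvergence} (applied with $\mathscr L\equiv 0$) with Jensen's inequality for the quasiconvex envelope in order to produce the first containment ${\rm sym}\,F\in\{V^{qce}=0\}$, and then to establish the inclusion $\{V^{qce}=0\}\subseteq Q^e_2\mathcal M$ by passing to the limit in the coercivity bound~\eqref{eq:coercivity} and taking the quasiconvexification on linear strains.

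I would first pass to a subsequence realising the liminf, so that $\mathscr E_\ep(u_\ep)\to 0$. By Proposition~\ref{prop:compactness} the sequence $\{u_\ep\}$ is bounded in $\Wp$, hence along a further subsequence $u_\ep\rightharpoonup u$ in $\Wp$ for some $u\in\Wph$. Since $f\geq 0$ on $\Rd$, also $f^{qc}\geq 0$, and the liminf inequality of Theorem~\ref{thm:gammaconvergence} forces $\overline{\mathscr E}(u)=0$; in particular $u\in W^{1,2}_h$ and $f^{qc}(\na u)=0$ a.e.\ in $\Om$. Because $\pa_D\Om=\pa\Om$ and $h(x)=Fx$, the trace condition together with the divergence theorem yield $\fint_\Om\na u\,dx = F$. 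The quadratic growth of $f$ is inherited by $f^{qc}\in\mathcal Q_2$, so Jensen's inequality on $W^{1,2}_h$ gives
\[
0\leq f^{qc}(F)=f^{qc}\!\left(\fint_\Om\na u\,dx\right)\leq\fint_\Om f^{qc}(\na u)\,dx=0,
\]
whence $f^{qc}(F)=0$, equivalently $V^{qce}({\rm sym}\,F)=0$ by Proposition~\ref{prop:qcqce}.

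For the second inclusion, I would exploit \eqref{eq:coercivity} to extract a pointwise lower bound for $V$. For $E$ in any fixed compact subset of $\Rds$ and $\ep$ small enough, ${\rm dist}(I+\ep E,\mathcal U_\ep)\leq 1$, so the quadratic branch of $g_p$ in~\eqref{eq:gp} together with~\eqref{eq:coercivity} gives
\[
V_\ep(E)=\frac{1}{\ep^2}W_\ep(I+\ep E)\geq \frac{c}{2\ep^2}\,{\rm dist}^2(I+\ep E,\mathcal U_\ep).
\]
Invoking Lemma~\ref{lem:dist} to pass to the limit on the right, and using that $V_\ep\to V$ uniformly on compact subsets of $\Rds$, produces the pointwise bound $V(E)\geq (c/2)\,{\rm dist}^2(E,\mathcal M)$ on $\Rds$. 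Taking the quasiconvexification on linear strains preserves the inequality, so $V^{qce}(E)\geq (c/2)\bigl({\rm dist}^2(\cdot,\mathcal M)\bigr)^{qce}(E)$; the zero set of the right-hand side is exactly $Q^e_2\mathcal M$ by definition of the weak $2$-quasiconvex hull on linear strains, and the desired inclusion follows.

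The only step that is not immediately standard is the passage to the pointwise lower bound $V(E)\geq (c/2)\,{\rm dist}^2(E,\mathcal M)$, which relies crucially on the linearisation of distances $\ep^{-1}{\rm dist}(I+\ep E,\mathcal U_\ep)\to{\rm dist}(E,\mathcal M)$ furnished by Lemma~\ref{lem:dist}; granted this, everything else reduces to a routine combination of $\Gamma$-convergence, the trace condition, and Jensen's inequality for $f^{qc}\in\mathcal Q_2$.
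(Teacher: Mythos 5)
Your proof follows the same overall strategy as the paper's: compactness plus the $\Gamma$-liminf inequality give $u\in W^{1,2}_h$ with $f^{qc}(\nabla u)=0$ a.e., the inequality
\[
f^{qc}(F)\leq\fint_\Om f^{qc}(\nabla u)\,dx=0
\]
forces $V^{qce}({\rm sym}\,F)=0$, and the lower bound $V(E)\geq (c/2)\,{\rm dist}^2(E,\mathcal M)$ via Lemma~\ref{lem:dist} yields the inclusion $\{V^{qce}=0\}\subseteq Q^e_2\mathcal M$.

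The one place where your write-up is imprecise is the labelling of the middle step as ``Jensen's inequality for the quasiconvex envelope.'' Jensen applies to convex integrands; $f^{qc}$ is only quasiconvex. The inequality you want is exactly the $W^{1,2}$-quasiconvexity inequality for $f^{qc}$ tested against $u-h\in W^{1,2}_0(\Om,\RR^d)$ (note that $\pa_D\Om=\pa\Om$ and $h(x)=Fx$, so $\nabla u=F+\nabla\varphi$ with $\varphi\in W^{1,2}_0$, and there is no need to pass through $\fint_\Om\nabla u\,dx=F$). Quasiconvexity in the usual sense is $W^{1,\infty}$-quasiconvexity, i.e.\ the inequality only holds a priori for Lipschitz test functions; to extend it to $W^{1,2}$ test functions one needs that $f^{qc}$ is $W^{1,2}$-quasiconvex, which follows from the Ball--Murat theorem because $0\leq f^{qc}\leq c(1+|\cdot|^2)$. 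You correctly note $f^{qc}\in\mathcal Q_2$ but don't cite Ball--Murat, which is the ingredient doing the work here and is exactly what the paper invokes; without it, the inequality does not follow just from ordinary quasiconvexity. With that reference inserted the argument is complete and matches the paper's proof.
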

The idea is that at low energy scales,
i.e. $\ep^{-2}\int_{\Omega}W_\ep \ll 1$, 
restrictions are imposed on the possible boundary data $F$ so that they are
compatible with the wells $U\in \mathcal M$. For example,
it is straightforward to show (see the proof of Corollary \ref{cor:lowen})
that the data $F$ must satisfy  ${\rm sym}\, F\in\{V^{qce}=0\}$.
However, 
such restrictions can be improved and 
are typically expressed in terms of some quasiconvex hull of the wells.
In this case, the appropriate restriction appears to be ${\rm sym}\, F\in Q^e_2\mathcal M$. 
Corollary \ref{cor:lowen}  asserts that $\{V^{qce}=0\}\subseteq Q^e_2\mathcal M$,
so that the restriction is mild; it would be interesting to know if further
restrictions could be imposed on $F$.

We remark that for a general $V$ it is not known whether
$\{ V^{qce}=0\} = Q^e\{V=0\}$, but it is always true (and it is easy to check)
that $ Q^e\{V=0\}\subseteq \{ V^{qce}=0\}$. 
On the other hand, in \cite[Theorem~4]{Zhang2004} it is proved that
\begin{equation*}\label{agosto}
{\bf Q}^e_q\tilde K=Q^e_q\tilde K=Q^e_1\tilde K, 
\quad\mbox{for every }q\in[1,\infty),
\end{equation*}
for every compact  $\tilde K\in\Rds$ but 
it is not known whether $Q_q^e\tilde K=Q^e\tilde K$ is true in general. 
The nonlinear version of these results is given by 
\cite[Proposition~2.5]{Zhang1998} and we have that
\begin{equation*}
QK={\bf Q}_qK=Q_q K,\quad\mbox{for every }q\in[1,\infty),   
\end{equation*}
for every compact $K\in\Rd$.

\begin{remark}
For energies describing nematic elastomers (materials to which our results apply, see Section \ref{sec:application}), 
more can be said for the geometrically linear as well as for the nonlinear case. Indeed, in the geometrically linear case,
the fact that $\{ V^{qce}=0\} = Q^e\{V=0\}$ is proved in \cite{Cesana} (see also \cite{Ce_DeSim}),  
while the nonlinear case $\{ W^{qc}=0\} = Q\{W=0\}$ is due to \cite{DeSim_Dolz}. 
\end{remark}

\begin{remark}[Inhomogeneous materials]
We conclude this section by noting that all the results stated hold in the more general case 
of inhomogeneous materials, that is when the energy densities $W_{\ep}$ are also functions
of $x\in\Om$. In this case, our hypotheses can be reformulated in the following way:
$W_{\ep}:\Om\times\Rd\to[0,\infty]$ is measurable, $W_{\ep}(x,\cdot)$ is continuous in an $(\ep,x)$-independent 
neighborhood of the identity and frame-indifferent, and $W_{\ep}(x,F)=0$ if and only if $F\in\mathcal U_{\ep}$
for a.e. $x\in\Om$. Moreover,
$$
W_\ep(x,F)\geq c g_p(d(F,\mathcal{U}_\ep))
$$
for all $F\in\mathbb{R}^{d\times d}$, for a.e. $x\in\Om$, and some $c>0$ independent of $\ep$ and $x$.
The functions $f_{\ep}(F)$ are replaced by $f_{\ep}(x,F)$ and we require that
$f_{\ep}\to f$ uniformly on $\Om\times K$ for every compact $K\in\Rd$, and that
$f(x,F)\leq c(1+|F|^2)$ for every $F\in\Rd$ and some constant $c>0$ independent of $x$.
Finally, $f^{qc}(F)$ has to be replaced
 by $f^{qc}(x,F)$, where
$f^{qc}(x,\cdot)=\left(f(x,\cdot)\right)^{qc}$ for a.e. $x\in\Om$.  
\end{remark}


\section{Application to Nematic Elastomers}\label{sec:application}

In this section, we consider the case $d=3$, so all deformations are maps from
$\RR^3$ to $\RR^3$. We use the notation $\tr F^2$ to denote the trace of the square
of a matrix $F\in\RR^{3\times3}$, while $\tr^2 F$ stands for $(\tr F)^2$.
The unit sphere of $\RR^3$ is denoted by $\Sph$.
 
We begin by recalling that the standard \emph{neo-Hookean} 
energy for incompressible deformations
\begin{equation}
  \label{neohook}
  W(F) = C\left(|F|^2-3\right),\quad {\rm if}\,\, \det F=1,
\end{equation}
with $C>0$, has a natural generalization to compressible strains (see \cite{Holz})
\begin{align}
  \label{neohookcomp0}
  W_{\rm comp}(F)  &= W((\det F)^{-1/3}F) + \Wvol (\det F)\\
  \label{neohookcomp}
  & = C\left(\frac{|F|^2}{(\det F)^{2/3}}-3\right) + 
\Wvol (\det F), \quad \det F>0.
\end{align}
The $1/3$ power in \eqref{neohookcomp0}
is natural because $\det[(\det F)^{-1/3}F]=1$, whenever $\det F>0$. 
We assume the function $\Wvol$ satisfies the following natural properties:
\begin{equation}
  \label{Wvol}\left.
  \begin{array}{l}
    \Wvol \in C^2((0,\infty),\RR),\\
    \Wvol (t) =0\,\mbox{ if and only if }\,t=1,\\
    \Wvol (t) \to +\infty, \,\, {\rm as} \,\, t\to 0^+,\\
    \Wvol(t) \geq k\,t^2 ,\,\, \mbox{for every }t\geq M >0,\,\mbox{for some }M,k>0,\\
    \Wvol''(1) >0. \\
  \end{array}\right\}
\end{equation}
In the condition $\Wvol''(1) >0 $, the strict inequality is
important for our analysis to apply, as will be apparent later. An example of $\Wvol$
is $\Wvol(t) = t^2-1-2\log t$.

We note that if $W_{\rm comp}$ is given by \eqref{neohookcomp} and $\Wvol$ satisfies
\eqref{Wvol}, then $W_{\rm comp}(F) \geq 0$ and $W_{\rm comp}(F) = 0$ if and only if $F\in SO(3)$.  
This can be seen by using a standard inequality between arithmetic
and geometric mean.

The transition from incompressible to compressible energies 
is the same for models of nematic elastomers. 
We begin by considering the standard energy density 
for modeling incompressible nematic elastomers given by
\begin{equation*}
  \label{Wincomp}
  W(F):=\min_{n\in \Sph}W_n (F),\quad {\rm if}\,\, \det F=1,
\end{equation*}
where
\begin{align*}
& W_n(F) := \frac{\mu}{2}\left[\tr(F^TL_n^{-1}F)-3\right],\\
& L_n := a^{2/3}n\otimes n + a^{-1/3}(I-n\otimes n), \label{Ln}
\end{align*}
and $\mu>0$,  $a>1$ are constants. 
This density has been studied, e.g., in \cite{AgDe,AgDe2,DeSim_Dolz,DeTe}.
Note that $n$ is an eigenvector
of $L_n$, with eigenvalue $a^{2/3}$. Any nonzero vector perpendicular to
$n$ is also an eigenvector of $L_n$, with eigenvalue $a^{-1/3}$. Hence,
$\det L_n = 1$. It is straightforward to check that
\begin{equation*}
L_n^{\alpha} = a^{2\alpha/3}n\otimes n + a^{-\alpha/3}(I-n\otimes n),\quad\mbox{for every }\alpha\in\RR.
\end{equation*}
Note that $W_n$ can be written in the neo-Hookean form
\[
W_n(F) = \frac{\mu}{2}\left[\tr(F_n^TF_n)-3\right],\quad F_n = L_n^{-\frac12}F,
\]
which shows that only the quantity $F_n$ related to the deformation gradient $F$ 
is responsible for the storage of energy.
Generalizing this form of $W_n$, just as $W$ in \eqref{neohook}
was replaced by $W_{{\rm comp}}$ in \eqref{neohookcomp},
we replace $W_n$ by
\begin{equation}
  \label{Wncomp}
  W_n(F)  := \frac{\mu}{2}\left[\frac{\tr(F^TL_n^{-1}F)}{\left(
\det F\right)^{2/3}}-3\right]+\Wvol (\det F),\quad \det F>0,
\end{equation}
where we have used the fact that $\det F_n=\det F$. 
We always assume that $\Wvol$ satisfies \eqref{Wvol}.

We work with the compressible model for nematic elastomers
given by the minimum over $n$ of the compressible densities $W_n$ in \eqref{Wncomp}:
\begin{equation*}
  \label{Wcomp}
W(F)  := \min_{n\in\Sph}\frac{\mu}{2}\left[\frac{\tr(F^TL_n^{-1}F)}{\left(
\det F\right)^{2/3}}-3\right]+\Wvol (\det F),\quad \det F>0.
\end{equation*}
A straightforward computation (cf. \cite{DeTe}) gives
\begin{equation}
  \label{min_form}
\min_{n\in \Sph}  \tr(F^TL_n^{-1}F)= 
\left(\frac{\lambda_1(F)}{a^{-1/6}}\right)^2 + 
\left(\frac{\lambda_2(F)}{a^{-1/6}}\right)^2 + 
\left(\frac{\lambda_3(F)}{a^{1/3}}\right)^2,
\end{equation}
where $0<\lambda_1(F)\leq \lambda_2(F) \leq \lambda_3(F)$ are the ordered singular values of $F$.
It is easy to check that $W(F) \geq 0$ and
$W(F)=0$ if and only if $F$ belongs to the set
\begin{equation*}
  \label{Unem}
  \mathcal U : = \left\{L_n^{\frac12}R \, :\, n \in \Sph, \, R\in SO(3)\right\}.
\end{equation*}
\begin{remark}\label{rem:wells}
In the sequel, we may equivalently consider wells of the form
\begin{equation*}
  \label{Unem2}
 \tilde{ \mathcal U} : = \left\{RL_n^{\frac12} \, :\, n \in \Sph, \, R \in SO(3)
\right\}.
\end{equation*}
Indeed, we have that $\mathcal U = \tilde{\mathcal U}$, because
$RL_n^{\frac12} = L_{Rn}^{\frac12}R$
for any $n\in \Sph$ and any $R\in SO(3)$.
\end{remark}


\subsection{The small-strain regime and its rigorous justification}
We consider the small-strain regime $a=(1+\ep)^3$, with $\ep \ll 1$.
In this case, we write
\begin{equation}
  \label{Lnep}
  L_{n,\ep}: = (1+\ep)^2n\otimes n + (1+\ep)^{-1}(I-n \otimes n),
\end{equation}
and using \eqref{min_form},
\begin{align}\label{Wep}
  W_\ep(F):&= \min_{n\in\Sph}\frac{\mu}{2}\left[\frac{\tr(F^TL_{n,\ep}^{-1}F)}{\left(
\det F\right)^{2/3}}-3\right]+\Wvol (\det F) \\
&=  \frac{\mu}{2}\left\{\frac{1}{(\det F)^{2/3}} \right.
\left[\left(\frac{\lambda_1(F)}{(1+\ep)^{-\frac12}}\right)^2 + 
\left(\frac{\lambda_2(F)}{(1+\ep)^{-\frac12}}\right)^2 \right.\nonumber\\
& \qquad \qquad \qquad \qquad \left. \left.+ 
\left(\frac{\lambda_3(F)}{1+\ep}\right)^2-3\right]\right\}+\Wvol (\det F)\label{Wep_2}
\end{align}
for all $F$ with $\det F>0$.
The set of wells for $W_\ep$ is
\begin{equation}
  \label{Uep}
 \mathcal{ U}_\ep : = \left\{RL_{n,\ep}^{\frac12} \, :\, n \in \Sph, \, R \in SO(3)
\right\}.
\end{equation}

\begin{remark}\label{rem:Un}
From \eqref{Lnep}, we can write $L_{n,\ep} = I + 2\ep U_n + o(\ep)$, where
\begin{equation}
  \label{Un}
  U_n := \frac12 \left( 3n\otimes n -I\right)
\end{equation}
is traceless. 
This definition of $U_n$ will be useful later on because we
will use the equivalence $(I+\ep E)^2-L_{n,\ep} = 2\ep(E-U_n) + o(\ep)$
to deduce the expression of the limiting small-strain energy density.
We note that using $a=(1+\ep)^\alpha$,
$\alpha\in\RR$, would also be a valid small-strain regime.
In this case the first-order expansion of $L_{n,\ep}$ would be the same as before but with
$\frac{\alpha}6(3n\otimes n-I)$ in place of (\ref{Un}).
The power $\alpha=3$ is chosen only for notational convenience. 
\end{remark}

The results of Section \ref{sec:main} can be applied with $p=3/2$ (see Lemma \ref{lem:Wgrowth}) 
to the model we have presented so far and 
in particular we can deduce Theorem \ref{thm:nematic} below. 
Here, $\Om$ is a bounded and Lipschitz domain of $\RR^3$ and,
as in Section \ref{sec:main}, $h\in W^{1,\infty}(\Om,\RR^3)$ represents the boundary data
and $\mathscr L:W^{1,\frac32}(\Om,\RR^3)\to\RR$, continuous and linear, represents the applied loads.
The set $W^{1,\frac32}_h$ is defined as in (\ref{Wph}) with $p=3/2$.

\begin{theorem}\label{thm:nematic}
Consider the family of energy densities given by (\ref{Wep}).
Set 
\[
m_{\ep}:=\inf_{u\in W^{1,\frac32}_h}\left\{\frac{1}{\ep ^2}\int_{\Omega}W_\ep(I+\ep\nabla u)dx-\mathscr L(u)\right\},
\]
and suppose that $\{u_{\ep}\}$ is a sequence such that
\begin{equation*}
\lim_{\ep\to0}\left\{\frac{1}{\ep ^2}\int_{\Omega}W_\ep(I+\ep\nabla u_\ep)dx-\mathscr L(u_{\ep})\right\}
=\lim_{\ep\to0}m_{\ep}.
\end{equation*}
Then, up to a subsequence, $u_{\ep}\rightharpoonup u$ in $W^{1,\frac32}(\Om,\RR^3)$, 
where $u$ is a solution to the minimum problem
\begin{equation*}
m:=\min_{u\in W_h^{1,2}}\left\{\int_{\Om}f^{qc}(\na u)dx-\mathscr L(u)\right\},
\end{equation*}
with
\begin{equation}\label{eqn:exp_f}
f(F):=\mu \min_{n\in\Sph}|{\rm sym}\,F-U_n|^2 +\frac{\lambda}{2}\tr^2F,
\qquad\lambda:=\Wvol ''(1)-\frac23\mu,
\end{equation}
for every $F\in\RR^3$. Moreover, $m_{\ep}\to m$.
\end{theorem}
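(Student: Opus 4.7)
The plan is to apply Theorem \ref{main_thm} with $p = 3/2$, which amounts to verifying three things for the family \eqref{Wep}: (i) the wells $\mathcal{U}_\ep$ in \eqref{Uep} have the form \eqref{eq:intro_wells} with a compact $\mathcal{M}\subset \mathbb{R}^{3\times 3}_{\rm sym}$; (ii) the coercivity inequality \eqref{eq:coercivity} with $p=3/2$, which is precisely the content of Lemma \ref{lem:Wgrowth} already referenced in the statement; (iii) the rescaled densities $f_\ep$ converge uniformly on compact subsets of $\mathbb{R}^{3\times 3}$ to the $f$ defined by \eqref{eqn:exp_f}, with $0 \leq f(F) \leq c(1+|F|^2)$.

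For (i), Taylor-expanding $(1+\ep)^2$ and $(1+\ep)^{-1}$ in \eqref{Lnep} gives $L_{n,\ep} = I + 2\ep U_n + \ep^2 I + o(\ep^2)$ uniformly in $n \in \Sph$, hence $L_{n,\ep}^{1/2} = I + \ep U_n + o(\ep)$; together with Remark \ref{rem:wells} this identifies $\mathcal{U}_\ep$ with the form \eqref{eq:intro_wells} for the compact set $\mathcal{M} = \{U_n : n \in \Sph\} \subset \mathbb{R}^{3\times 3}_{\rm sym}$. The main task is (iii). For $F = I + \ep E$ with symmetric $E$ ranging in a compact $K \subset \mathbb{R}^{3\times 3}_{\rm sym}$, I would expand $W_{n,\ep}(I + \ep E)$ to order $\ep^2$ uniformly in $(n,E) \in \Sph \times K$, using $L_{n,\ep}^{-1} = I - 2\ep U_n + O(\ep^2)$, $\det(I+\ep E) = 1 + \ep\, \tr E + O(\ep^2)$, $(1+x)^{-2/3} = 1 - \tfrac{2}{3}x + \tfrac{5}{9}x^2 + o(x^2)$, together with $\Wvol(1) = \Wvol'(1) = 0$ which gives $\Wvol(\det F) = \tfrac{1}{2}\Wvol''(1)\ep^2 \tr^2 E + o(\ep^2)$. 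Collecting the $O(\ep^2)$ contributions one obtains
\[
\frac{1}{\ep^2}\, W_{n,\ep}(I+\ep E) \;=\; \mu\, |E - U_n|^2 + \tfrac{1}{2}\bigl(\Wvol''(1) - \tfrac{2}{3}\mu\bigr)\tr^2 E + o(1),
\]
with $o(1)$ uniform in $(n,E)$. Taking $\min_{n \in \Sph}$ and using compactness of $\Sph$ to interchange the minimum with the limit yields $V(E)$ in the form \eqref{eqn:exp_f}, so $f_\ep \to f$ uniformly on compact subsets of $\mathbb{R}^{3\times 3}$. Nonnegativity of $f$ follows from $W_\ep \geq 0$ (arithmetic--geometric mean inequality applied to $\tr(F^T L_{n,\ep}^{-1}F)/(\det F)^{2/3}$ together with $\Wvol \geq 0$), while the upper bound is immediate from boundedness of $\mathcal{M}$. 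With all hypotheses of Theorem \ref{main_thm} verified, the conclusion follows directly.

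The principal obstacle is the second-order bookkeeping leading to the displayed expansion. The first-order term $2\ep\,\tr(E - U_n) = 2\ep\, \tr E$ in $\tr(F^T L_{n,\ep}^{-1}F) - 3$ must cancel exactly against the first-order contribution of $(\det F)^{-2/3}$ multiplying the leading $3$, so that only genuinely quadratic terms survive; the remaining cross terms, together with the second-order contribution from $\Wvol$, must then conspire to produce precisely the coefficient $\tfrac{1}{2}\lambda = \tfrac{1}{2}(\Wvol''(1) - \tfrac{2}{3}\mu)$ in front of $\tr^2 E$, which is where the hypothesis $\Wvol''(1) > 0$ plays its expected role in rendering the limiting density coercive.
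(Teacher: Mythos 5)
Your overall plan — reduce to Theorem~\ref{main_thm} with $p=3/2$ by verifying (i) the well structure \eqref{eq:intro_wells} with $\mathcal M=\{U_n:n\in\Sph\}$, (ii) the coercivity bound via Lemma~\ref{lem:Wgrowth}, and (iii) the uniform convergence $f_\ep\to f$ on compacts with $0\le f\le c(1+|F|^2)$ — is exactly the paper's reduction, and items (i) and (ii) are handled correctly.

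The genuine divergence, and the gap, is in item (iii), which in the paper is the content of Lemmas~\ref{prop:lin_lim} and~\ref{lem:limit1}. You propose to expand $W_{n,\ep}(I+\ep E)$ directly to second order in $\ep$ and claim the remainder $o(1)$ is uniform in $(n,E)\in\Sph\times K$; from such a joint uniform estimate, passing $\min_{n}$ inside the limit is indeed immediate. The paper instead routes the computation through $\widetilde W_{n,\ep}(B)=W_{n,\ep}(F)$ with $B=FF^T$, exploiting the fact that $\widetilde W_{n,\ep}$ has a minimum at $L_{n,\ep}$ and that $D^2\widetilde W_{n,\ep}(L_{n,\ep})$ has the tractable closed form \eqref{D2WH}; this gives $q_{n,\ep}\to V_n$ uniformly in $E$ for each fixed $n$ (Lemma~\ref{lem:limit1}), but \emph{not} a uniformity-in-$n$ statement, which is why Lemma~\ref{prop:lin_lim} then requires a nontrivial $\sup_K/\inf_K$ argument with extraction of convergent subsequences $E_{\ep_j}\to E$, $n_j\to\hat n$ to deduce $\min_n q_{n,\ep}\to\min_n V_n$ uniformly on $K$. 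Your route, if fully carried out, would bypass that extraction argument and is arguably cleaner — but the joint uniformity in $(n,E)$ is precisely the thing you say you ``would'' establish via the second-order bookkeeping and never do. That bookkeeping (the cancellations you correctly flag between the linear term $2\ep\,\tr(E-U_n)$ in $\tr(F^TL_{n,\ep}^{-1}F)$ and the linear term from $(\det F)^{-2/3}$, together with the second-order contribution of $\Wvol$) is the entire analytical content of the lemma; as it stands, the proposal replaces the paper's proof with an IOU for the same computation, plus an additional unproven uniformity-in-$n$ claim. If you want to keep your direct route, you should note that uniformity in $n$ can be expected because all $n$-dependence enters through $L_{n,\ep}$ and $U_n$, both continuous on the compact $\Sph$, so the Taylor remainder is controlled by a modulus of continuity of $D^2$ over a compact set — but this has to be said and checked, not assumed.
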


The choice of notation $\mu$ and $\lambda$ for the constants in (\ref{eqn:exp_f}) 
is motivated by the theory of isotropic linear
elasticity where $\mu$ and $\lambda$ correspond to the shear and bulk modulus, respectively.

We remark that an explicit expression for the quasiconvexification $f^{qc}$ of $f$, 
as given in \eqref{fqc_form} below,
is due to \cite{Cesana}. It turns out that, while the expression of $f$ involves
the distance of ${\rm sym}\,F$ to the set of matrices 
$$
\{U_n:n\in\Sph\}=\Big\{U\in\RR^{3\times 3}_{{\rm sym}}:\{\mbox{eigenvalues of }U\}=\{-1/2,-1/2,1\}\Big\}
$$ 
(where $U_n$ is defined in (\ref{Un})), 
the expression of $f^{qc}$ involves the distance of ${\rm sym}\,F$ to the set
$$
\mathscr Q:=\Big\{U\in\RR^{3\times 3}_{{\rm sym}}:\tr\,U=0\mbox{ and }\{\mbox{eigenvalues of }U\}\subseteq[-1/2,1]\Big\}.
$$
More precisely,
\begin{equation}
  \label{fqc_form}
f^{qc}(F)=\mu\min_{U\in\mathscr Q}|{\rm sym}\,F-U|^2 +\frac{\lambda}2\tr^2F,  
\end{equation}
for every $F\in\RR^{3\times 3}$. 

Theorem \ref{thm:nematic} is a straightforward application of Theorem \ref{main_thm}
once we establish that the family of energies $\{W_{\ep}\}$ given by (\ref{Wep})
satisfies the hypotheses. Essentially, we have to verify that the growth condition (\ref{eq:coercivity})
with $p=3/2$ and $\mathcal U_\ep$ given by (\ref{Uep}) is satisfied (see Lemma \ref{lem:Wgrowth}). 
Also, we have to show that $f$ as in (\ref{eqn:exp_f}) satisfies
$f(F)=V({\rm sym}\,F)$, where $W_{\ep}(I+\ep\cdot)/\ep^2\to V$, as $\ep\to 0$, 
uniformly on compact subsets of $\RR^{3\times 3}_{{\rm sym}}$ 
(see Lemma \ref{prop:lin_lim}).
The fact that $0\leq f(F)\leq c(1+\vert F\vert^2)$ is a direct consequence of (\ref{eqn:exp_f}).

In establishing estimates, it is useful to define the functions
\begin{equation}
  \label{tildeWB}
  \widetilde W_{n,\ep}(B): =\frac{\mu}{2}\left(\frac{\tr(BL_{n,\ep}^{-1})}{(\det B)^{1/3}}-3\right) + 
\Wvol (\sqrt{\det B\,}),
\quad  \widetilde W_{\ep}(B)  := \min_{n\in\Sph}\widetilde W_{n,\ep}(B),
\end{equation}
for every positive definite, symmetric matrix $B$,
to replace $W_{n,\ep}$ and $W_\ep$.
Indeed, 
\begin{equation}
  \label{tildeW}
  W_{n,\ep}(F)=\widetilde W_{n,\ep}(FF^T),\qquad W_{\ep}(F)=\widetilde W_{\ep}(FF^T),
\end{equation}
for every $F\in\RR^{3\times 3}$ with $\det F>0$.

\begin{lemma}
  \label{lem:Wgrowth}
Let $W_\ep$ be defined as in \eqref{Wep}, and $\mathcal U_\ep$
as in \eqref{Uep}. Then the following holds:
\begin{itemize}
\item[(i)] $W_\ep (F) \geq c\,{\rm dist}^2(F,\mathcal U_\ep)$ for $F$ near $\mathcal U_\ep$,
\item[(ii)] $W_\ep (F) \geq c\,{\rm dist}^{3/2}(F,\mathcal U_\ep)$ 
for $F$ far from $\mathcal U_\ep$.
\end{itemize}
\end{lemma}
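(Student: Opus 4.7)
The approach is to reduce the multi-well coercivity of $W_\ep$ to the single-well coercivity of the compressible neo-Hookean density
\[
W_{\rm NH}(G) := \tfrac{\mu}{2}\left(\tfrac{|G|^2}{(\det G)^{2/3}} - 3\right) + \Wvol(\det G),\qquad \det G > 0,
\]
whose zero set is $SO(3)$. Since $\tr(F^T L_{n,\ep}^{-1} F) = |L_{n,\ep}^{-1/2} F|^2$ and $\det L_{n,\ep} = 1$, one has the factorization $W_{n,\ep}(F) = W_{\rm NH}(L_{n,\ep}^{-1/2}F)$, so $W_\ep(F) = \min_{n \in \Sph} W_{\rm NH}(L_{n,\ep}^{-1/2}F)$. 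Letting $n^* = n^*(F,\ep)$ attain this minimum and $R^* \in SO(3)$ realize $\dist(L_{n^*,\ep}^{-1/2}F, SO(3))$, the matrix $L_{n^*,\ep}^{1/2}R^*$ lies in $\mathcal U_\ep$ and yields
\[
\dist(F, \mathcal U_\ep) \leq \|L_{n^*,\ep}^{1/2}\|\,\dist(L_{n^*,\ep}^{-1/2}F, SO(3)).
\]
Because $\|L_{n,\ep}^{1/2}\| \to 1$ as $\ep \to 0$ uniformly in $n\in\Sph$, the factor on the right is $1 + O(\ep)$, and it suffices to prove, for $G$ with $\det G > 0$: (i$'$) $W_{\rm NH}(G) \geq c\,\dist^2(G, SO(3))$ for $G$ near $SO(3)$, and (ii$'$) $W_{\rm NH}(G) \geq c\,\dist^{3/2}(G, SO(3))$ for $G$ far from $SO(3)$.

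For (i$'$), by frame indifference and polar decomposition it is enough to Taylor expand $W_{\rm NH}(I + H)$ for symmetric $H$ near $0$. Using $\Wvol(1) = \Wvol'(1) = 0$ (the former from the single zero condition, the latter because $1$ is then necessarily a local minimum) one obtains
\[
W_{\rm NH}(I+H) = \mu\bigl|H - \tfrac{\tr H}{3}I\bigr|^2 + \tfrac{1}{2}\Wvol''(1)(\tr H)^2 + O(|H|^3),
\]
which is a positive-definite quadratic form on $\Rds$ since $\mu > 0$ and $\Wvol''(1) > 0$. As $|H| = \dist(G, SO(3))$ when $G$ is close to its polar rotation, this proves (i$'$) in a uniform neighborhood of $SO(3)$, hence (i) in a uniform neighborhood of $\mathcal U_\ep$.

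For (ii$'$), the bounded-norm regime ($|G|$ bounded, $\dist(G, SO(3)) \geq \delta$) is immediate from continuity, the fact that $W_{\rm NH}$ vanishes only on $SO(3)$, and $\Wvol(t) \to \infty$ as $t \to 0^+$. For $r := |G| \to \infty$, set $d = \det G$ and split at the scale $d = r^{3/4}$: if $d \leq r^{3/4}$ then $r^2/d^{2/3} \geq r^{3/2}$, so the isochoric term alone gives $W_{\rm NH}(G) \geq \tfrac{\mu}{2}\,r^{3/2} - \tfrac{3\mu}{2}$; if $d \geq r^{3/4}$ then for $r$ large one has $d \geq M$, whence $\Wvol(d) \geq k d^2 \geq k r^{3/2}$. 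Since $\dist(G, SO(3)) \sim |G|$ at infinity, (ii$'$) follows. The principal obstacle is precisely this balancing argument: the isochoric contribution degenerates along the volumetric direction, so the exponent is dictated by the quadratic lower bound on $\Wvol$. The bound $\Wvol(t) \geq k t^2$ in \eqref{Wvol} is sharp, and the resulting $3/2$-growth cannot be improved without strengthening it, which is precisely why the weak-coercivity framework of Section~\ref{sec:main} is required for this class of models.
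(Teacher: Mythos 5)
Your proof is correct, but it takes a genuinely different route from the paper. The key structural move you make is the factorization $W_{n,\ep}(F)=W_{\rm NH}(L_{n,\ep}^{-1/2}F)$ with $W_{\rm NH}$ the single-well compressible neo-Hookean density (using $\det L_{n,\ep}=1$), which collapses the multiwell estimate to single-well coercivity of $W_{\rm NH}$ about $SO(3)$, combined with the observation that the change of variable $G=L_{n,\ep}^{-1/2}F$ distorts $\dist(F,\mathcal U_\ep)$ and $|F|$ by factors $\|L_{n,\ep}^{\pm1/2}\|$ that are uniformly bounded in $n$ and small $\ep$. The paper instead passes to the Cauchy--Green variable $B=FF^T$, Taylor-expands $\widetilde W_{n,\ep}$ about $B=L_{n,\ep}$, computes the Hessian \eqref{D2WH} explicitly, and returns to $F$ via the Lipschitz estimate $|\sqrt A-\sqrt B|\le c|A-B|$; for the far regime the paper uses the singular-value formula \eqref{Wep_2} and a one-shot Young's inequality, while you split at the scale $\det G=|G|^{3/4}$. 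Both arguments are sound and elementary. What your reduction to $W_{\rm NH}$ buys is conceptual transparency (the role of $L_{n,\ep}$ as a mere change of reference is made explicit, and the intermediate compact region is dispatched cleanly), whereas the paper's route of computing $D^2\widetilde W_{n,\ep}$ in $B$-space pays off because the identical formula \eqref{D2WH} is reused verbatim to identify the limiting density $V$ in Lemma~\ref{prop:lin_lim}. One small point worth spelling out if you were to write this up in full: in (i$'$) the identity $|H|=\dist(I+H,SO(3))$ is only asymptotic for small symmetric $H$, but two-sided comparability $c_1|H|\le\dist(I+H,SO(3))\le|H|$ is what is actually needed and does hold uniformly near $I$.
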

Note that the above lemma implies that, up to a multiplicative constant, $W_\ep(F)$ 
is bounded below by $g_{\frac32}(\dist(F,\mathcal U_\ep))$, where $g_{\frac32}$ is the function given 
by (\ref{eq:gp}) with $p=3/2$.
\begin{proof}
Note that $\widetilde W_{n,\ep}(B)$ is minimized at the level $0$ 
by $B=L_{n,\ep}$, so that Taylor expansion gives
\begin{equation}
\label{eq:taylor_tilde}
\widetilde W_{n,\ep}(B) = \frac12 D^2 \widetilde W_{n,\ep}(L_{n,\ep})[B-L_{n,\ep}]^2 + 
o\left(|B-L_{n,\ep}|^2\right). 
\end{equation}
A direct computation yields

\begin{equation}
  \label{D2WH}
  D^2 \widetilde W_{n,\ep}(L_{n,\ep})[H]^2 
= \left(\frac14\Wvol ''(1)-\frac{\mu}{6}\right)
\tr^2(H L_{n,\ep}^{-1})+\frac{\mu}{2}\tr{(H L_{n,\ep}^{-1})^2},
\end{equation}
for every $H\in\Rds$.
Hence, if $\frac14\Wvol ''(1)-\frac{\mu}{6}\geq 0$ then 
$D^2 \widetilde W_{n,\ep}(L_{n,\ep})[H]^2\geq \frac{\mu}{2}\tr{(H L_{n,\ep}^{-1})^2}$.
On the other hand, if $\frac14\Wvol ''(1)-\frac{\mu}{6} < 0$ then
\begin{equation*}
  D^2 \widetilde W_{n,\ep}(L_{n,\ep})[H]^2 
\geq \frac34 \Wvol ''(1) \tr{ (H L_{n,\ep}^{-1})^2}.
\end{equation*}
This is due to the fact that 
$\tr(H L_{n,\ep}^{-1}) = \tr(L_{n,\ep}^{-\frac12} H L_{n,\ep}^{-\frac12})$
and that 
$(\tr{A})^2\leq 3\,\tr{A^2}$ for any $A\in \RR^{3\times 3}_{{\rm sym}}$. Thus,

\begin{equation}
  \label{DWne1}
  D^2 \widetilde W_{n,\ep}(L_{n,\ep})[H]^2\geq \min\left\{ \frac{\mu}{2},\frac34\Wvol''(1) \right\} \tr{ (H L_{n,\ep}^{-1})^2}.
\end{equation}
Now, since $\Sph$ is compact, one can show that
\begin{equation}
  \label{trace}
   \tr{ (H L_{n,\ep}^{-1})^2} \geq 
   \frac12 |H|^2,
\end{equation}
for every $n\in \Sph$, $H\in \Rds$, and for all $\ep$ small enough (independently of $n$ and $H$).
Then, from (\ref{eq:taylor_tilde}), (\ref{DWne1}) and (\ref{trace}), we obtain
\begin{equation}
  \label{DWne2}
  \widetilde W_{n,\ep}(B) 
\geq \frac14\min\left\{ \frac{\mu}{2},\frac34\Wvol''(1) \right\} |B-L_{n,\ep}|^2 + 
o\left(|B-L_{n,\ep}|^2\right). \nonumber
\end{equation}
Thus, provided $|B-L_{n,\ep}|$ is small, we have that
\begin{align}
  W_{n,\ep}(F) &= \widetilde W_{n,\ep}(FF^T) \geq c\left|FF^T-L_{n,\ep}\right|^2 
\geq c\left| \sqrt{FF^T}-L_{n,\ep}^{\frac12} \right|^2\nonumber\\
&\geq c\min_{Q\in SO(3)} \left|F -L_{n,\ep}^{\frac12}Q \right|^2
 \geq  c\min_{n\in\Sph}\min_{Q\in SO(3)} \left|F -L_{n,\ep}^{\frac12}Q \right|^2\nonumber\\
 & = c\,{\rm dist}^2(F,\mathcal U_\ep),
 \label{eq:near_estimate}
\end{align}
where we have also used the fact that $|\sqrt{F}-\sqrt{G}| \leq c |F-G|$ 
for any two positive definite matrices $F$ and $G$ sufficiently close to the identity.
Since $c$ in (\ref{eq:near_estimate}) is independent of $n$, we then have 
$\min_{n\in\Sph}W_{n,\ep}(F)\geq c\,{\rm dist}^2(F,\mathcal U_\ep)$,
whenever ${\rm dist}(F,\mathcal U_\ep)$ and $\ep$ are sufficiently small. This establishes (i).

Without loss of generality, we can assume $\ep \leq 1$ so that $1\leq (1+\ep)^2\leq 4$. Hence,
\[
\left(\frac{\lambda_1}{(1+\ep)^{-\frac12}}\right)^2+\left(\frac{\lambda_2}{(1+\ep)^{-\frac12}}\right)^2
+\left(\frac{\lambda_3}{1+\ep}\right)^2
\geq \lambda_1^2 + \lambda_2^2 +  \frac{\lambda_3^2}{4},
\]
and from \eqref{Wep_2}
\begin{equation}\label{bound1}
  W_\ep(F) 
\geq \frac{\mu}{2}\left(\frac{|F|^2}{4(\det F)^{2/3}}-3\right) +\Wvol(\det F).
\end{equation}
There are two cases. Either $\det F <M$ or $\det F \geq M$, where $M$ is the 
constant in \eqref{Wvol}. In the case  $\det F <M$, from \eqref{bound1} we obtain
\begin{align*}
  W_\ep(F)\geq c_1|F|^2-c_2 
  \geq  c\,{\rm dist}^2(F,\mathcal U_\ep)
\end{align*}
for $|F|\gg 1$, since $\Wvol \geq 0$.
Now, if $\det F \geq M$, we know from \eqref{Wvol} that $\Wvol (\det F)\geq k(\det F)^2$. Hence,
it follows from \eqref{bound1} that
\[
W_\ep(F) \geq \min \left\{ \frac{\mu}{8}, k\right\}\left(\frac{|F|^2}{(\det F)^{2/3}}+
(\det F)^2\right) -\frac{3\mu}{2}.
\]
Applying Young's inequality $xy \leq \frac1r x^r +\frac1q y^q$
with $x = (\det F)^{-1/2}|F|^{3/2}$, $y = (\det F)^{1/2}$, and $r=4/3$, $q=4$, we  have
\begin{equation*}
  W_\ep(F)
  \geq \frac43  \min \left\{ \frac{\mu}{8}, k\right\} \left(\frac34 x^{4/3}+\frac14 y^4\right) -\frac{3\mu}{2}\\
\geq\frac43  \min \left\{ \frac{\mu}{8}, k\right\}|F|^{3/2} -\frac{3\mu}{2}.
\end{equation*}
Thus, for $|F|\gg 1$, 
$W_\ep(F)
\geq  c\,{\rm dist}^{3/2}(F,\mathcal U_\ep)$.
\end{proof}

\begin{lemma}\label{prop:lin_lim}
Let $U_n$ be defined as in \eqref{Un}. For $E\in \RR^{3\times 3}_{{\rm sym}}$, we define
\begin{equation*}
  \label{Vform}
  V(E) = \mu \min_{n\in\Sph}|E-U_n|^2 +\frac{\lambda}{2}\tr^2E,
\qquad \lambda = \Wvol ''(1)-\frac23\mu.
\end{equation*}
Then 
\begin{equation}
  \label{VWlim}
  V(E) = \lim_{\ep \to 0}\frac{1}{\ep^2}W_\ep(I+\ep E),
\end{equation}
where $W_\ep$ is defined in \eqref{Wep}, and the limit is uniform on compact subsets of $\RR^{3\times 3}_{{\rm sym}}$.
\end{lemma}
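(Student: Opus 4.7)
The plan is to use the substitution $B_\ep := (I+\ep E)(I+\ep E)^T = I+2\ep E+\ep^2 E^2$ together with \eqref{tildeW} to rewrite
\[
\frac{1}{\ep^2}W_\ep(I+\ep E) = \frac{1}{\ep^2}\min_{n\in\Sph}\widetilde W_{n,\ep}(B_\ep),
\]
and then to Taylor expand each $\widetilde W_{n,\ep}$ about its minimizer $L_{n,\ep}$. By \eqref{eq:taylor_tilde},
\[
\widetilde W_{n,\ep}(B_\ep) = \tfrac12 D^2\widetilde W_{n,\ep}(L_{n,\ep})[B_\ep-L_{n,\ep}]^2 + o\bigl(|B_\ep-L_{n,\ep}|^2\bigr).
\]
The crucial input is the first-order expansion $L_{n,\ep} = I+2\ep U_n+o(\ep)$ recorded in Remark \ref{rem:Un}, valid uniformly in $n\in\Sph$; combined with $B_\ep = I+2\ep E+O(\ep^2)$ it gives
\[
B_\ep - L_{n,\ep} = 2\ep(E-U_n) + o(\ep),
\]
uniformly in $n\in\Sph$ and in $E$ on a compact subset of $\Rds$.

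Next I would substitute this expansion into the explicit expression \eqref{D2WH}. Using $L_{n,\ep}^{-1} = I + O(\ep)$ uniformly in $n$, the symmetry of $E-U_n$, and $\tr U_n = 0$ (also from Remark \ref{rem:Un}), a short calculation yields
\[
\tr\bigl((B_\ep-L_{n,\ep})L_{n,\ep}^{-1}\bigr) = 2\ep\,\tr E + o(\ep),
\]
\[
\tr\bigl(((B_\ep-L_{n,\ep})L_{n,\ep}^{-1})^2\bigr) = 4\ep^2|E-U_n|^2 + o(\ep^2).
\]
Plugging these into \eqref{D2WH} and into the Taylor expansion above, and dividing by $\ep^2$, the constants combine as $2\bigl(\tfrac14\Wvol''(1)-\tfrac{\mu}{6}\bigr) = \tfrac{\lambda}{2}$ and $2\cdot\tfrac{\mu}{2}=\mu$, producing
\[
\tfrac{1}{\ep^2}\widetilde W_{n,\ep}(B_\ep) = \mu|E-U_n|^2 + \tfrac{\lambda}{2}\tr^2 E + o(1),
\]
with $\lambda = \Wvol''(1)-\tfrac{2}{3}\mu$, exactly as in the statement.

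The remaining point, which I regard as the main (though essentially routine) subtlety, is that the $o(1)$ in the last display must be uniform in $n\in\Sph$ and in $E$ on a prescribed compact set $K\subset\Rds$. This is handled by writing the Taylor remainder in integral form and exploiting the joint continuity of $(n,\ep,B)\mapsto D^2\widetilde W_{n,\ep}(B)$ on a compact neighborhood of $\Sph\times\{0\}\times\{I\}$, together with $|B_\ep-L_{n,\ep}|=O(\ep)$ uniformly in $(n,E)\in\Sph\times K$. Once this uniform convergence is established for each $n$, passing to the minimum over $n\in\Sph$ preserves it, and yields
\[
\frac{1}{\ep^2}W_\ep(I+\ep E)\longrightarrow \mu\min_{n\in\Sph}|E-U_n|^2 + \frac{\lambda}{2}\tr^2 E = V(E)
\]
uniformly on compact subsets of $\Rds$, which is the claim.
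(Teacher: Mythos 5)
Your proof is correct, and it takes a genuinely more direct route than the paper. You establish that $q_{n,\ep}(E):=\ep^{-2}W_{n,\ep}(I+\ep E)$ converges to $V_n(E):=2D^2\widetilde W_{n,0}(I)[E-U_n]^2$ uniformly in \emph{both} $n\in\Sph$ and $E$ on a compact $K\subset\Rds$, and then you pass the convergence through the minimum via the elementary inequality
\[
\Bigl|\min_{n\in\Sph}q_{n,\ep}(E)-\min_{n\in\Sph}V_n(E)\Bigr|\le\sup_{n\in\Sph}\bigl|q_{n,\ep}(E)-V_n(E)\bigr|.
\]
The paper instead proves (in its Lemma~\ref{lem:limit1}) only the weaker statement that, for each \emph{fixed} $n$, $q_{n,\ep}\to V_n$ uniformly on $K$, and then handles the minimum over $n$ by an extraction-of-subsequences argument: it evaluates $\sup_K(q_{\ep_j}-\tilde V)$ and $\inf_K(q_{\ep_j}-\tilde V)$ at attaining points $E_{\ep_j}$, passes to convergent subsequences $E_{\ep_j}\to E$ and $n_j\to\hat n$, and shows both $\limsup$ and $\liminf$ vanish. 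Your approach avoids this two-sided compactness argument entirely by front-loading the uniformity in $n$, which is available because $\Sph$ is compact and $(n,\ep,B)\mapsto D^2\widetilde W_{n,\ep}(B)$ is jointly continuous, and because the $o(\ep)$ in $L_{n,\ep}=I+2\ep U_n+o(\ep)$ is in fact $O(\ep^2)$ uniformly in $n$ (as one reads off from \eqref{Lnep}). Your computation of the quadratic form is also consistent with \eqref{D2WH}: $\tr U_n=0$ kills the cross term, $\tr A^2=|A|^2$ for symmetric $A$, and the constants recombine to $\mu$ and $\lambda/2$ as you state. So the result is the same, obtained by a cleaner mechanism; the only caveat is that the uniform-in-$n$ Taylor remainder estimate, which you correctly identify as the one nontrivial step, should be spelled out (integral form of the remainder plus uniform continuity of $D^2\widetilde W_{n,\ep}$ on a compact neighborhood of $\Sph\times\{0\}\times\{I\}$), as you indicate.
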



\begin{proof}
For every $E\in\RR^{3\times 3}_{{\rm sym}}$, let us define
\begin{equation*}
  \label{Vn}
  V_n(E):=2D^2\widetilde W_{n,0}(I)[E-U_n]^2,\qquad \tilde V(E):=\min_{n\in\Sph}V_n(E),
\end{equation*}
where $\widetilde W_{n,0}$ is given by (\ref{tildeWB}) with $\ep=0$.
Note that from \eqref{D2WH} with $\ep=0$ we have that
\[
D^2\widetilde W_{n,0}(I)[H]^2 = \left(\frac14\Wvol ''(1) -\frac{\mu}{6}\right)\tr^2H +\frac{\mu}{2}|H|^2,
\]
for every $H\in\RR^{3\times 3}_{{\rm sym}}$, so that 
\begin{align*}
\tilde V(E) & =\min_{n\in\Sph} \left(\frac12 \Wvol ''(1) 
                     -\frac{\mu}{3}\right)\tr^2(E-U_n)+\mu|E-U_n|^2 \\
 & =  \min_{n\in\Sph} \mu|E-U_n|^2 +\frac{\lambda}{2} \tr^2E,
\end{align*}
with $\lambda = \Wvol ''(1)-\frac23\mu$, in view of the fact that $\tr\,U_n=0$.
Thus, $\tilde V=V$ where $V$ is given by (\ref{VWlim}). 
Now, for $W_{n,\ep}$ defined in \eqref{Wncomp} with
$L_{n,\ep}$ in place of $L_n$, let us introduce for every $E\in \RR^{3\times 3}_{{\rm sym}}$
\begin{equation*}
  \label{qdef}
  q_{n,\ep}(E) : = \frac{1}{\ep^2}W_{n,\ep}(I+\ep E),\quad 
q_\ep (E) : = \min_{n\in \Sph}q_{n,\ep}(E)=\frac1{\ep^2}W_{\ep}(I+\ep E).
\end{equation*}
To prove the lemma,
we show that $q_{\ep_j}\to\tilde V$, uniformly on compact subsets of $\Rds$,
for every vanishing sequence $\{\ep_j\}$.

Given a compact $K\subset \RR^{3\times 3}_{{\rm sym}}$, 
we  prove that $\sup_K (q_{\ep_j} - \tilde V) \to 0$ and $\inf_K (q_{\ep_j} - \tilde V) \to 0$, so that
$\sup_K |q_\ep - \tilde V|=\max\{\sup_K (q_{\ep_j} - \tilde V),-\inf_K (q_{\ep_j} - \tilde V)\} \to 0$. Note
that $q_{\ep_j}$ and $\tilde V$ are both continuous so
\begin{equation*}
  \label{Eep}
  \sup_K (q_{\ep_j} - \tilde V) =
q_{\ep_j} (E_{\ep_j})-\tilde V(E_{\ep_j})
\end{equation*}
for some $E_{\ep_j}\in K$. Up to subsequences, we have that $E_{\ep_j}\to E$, as $j\to\infty$,
for some $E\in K$.  
For any $n\in \Sph$,
\begin{equation}
  \label{supEep}
  \sup_K(q_{\ep_j}-\tilde V) 
\leq q_{n,\ep_j}(E_{\ep_j})-\tilde V(E_{\ep_j}) 
\leq \sup_K |  q_{n,\ep_j} -V_n| + V_n(E_{\ep_j})-\tilde V(E_{\ep_j}) .
\end{equation}
By Lemma \ref{lem:limit1} below, $q_{n,\ep} \to V_n$ uniformly on $K$, as $\ep \to 0$, for every $n\in\Sph$.
Therefore, by continuity of $V_n$, we obtain from (\ref{supEep}) that
\[
\limsup_{j\to \infty} \, \sup_K(q_{\ep_j}-\tilde V)\leq V_n(E)-\tilde V(E),\quad\mbox{for every }n\in \Sph.
\]
Taking the minimum over $n\in \Sph$ implies 
\begin{equation}
  \label{limsup}
  \limsup_{j \to \infty} \, \sup_K(q_{\ep_j}-\tilde V)\leq \min_{n\in \Sph }V_n(E)-\tilde V(E)=0.
\end{equation}

On the other hand, 
\[
\sup_K (q_{\ep_j}-\tilde V) = q_{\ep_j}(E_{\ep_j})-\tilde V(E_{\ep_j}) = 
q_{n_j,\ep_j}(E_{\ep_j})-\tilde V(E_{\ep_j}) ,
\]
where $n_j\in \Sph$ attains the minimum. Up to taking a further subsequence,
we may assume that $n_j\to\hat n$ as $j\to \infty$. Now
\begin{equation*}
|q_{n_j,\ep_j}(E_{\ep_j})-V_{\hat n}(E)|
\leq \sup_K |q_{n_j,\ep_j} - V_{\hat n} | + | V_{\hat n}(E_{\ep_j})-V_{\hat n}(E)| ,
\end{equation*}
and 
$\sup_K |q_{n_j,\ep_j}  - V_{\hat n} |\to 0$ by elementary computations.
Thus, $q_{n_j,\ep_j}(E_{\ep_j})\to V_{\hat n}(E)$ and 
\begin{equation}
  \label{liminf}
  \liminf_{j\to \infty} \, \sup_K(q_{\ep_j}-\tilde V) = V_{\hat n}(E) - \tilde V(E)
\geq \min_{n\in \Sph} V_n (E) - \tilde V(E) = 0.
\end{equation}
Together, \eqref{limsup} and \eqref{liminf} imply
\begin{equation}
  \label{suplimit}
\lim_{j\to\infty}\sup_K(q_{\ep_j}-\tilde V)=0.
\end{equation}

Establishing \eqref{suplimit} with $\sup_K$ replaced by $\inf_K$ is very similar. 
Indeed, let $E_{\ep_j}$ be such that 
\begin{equation*}
  \label{Eep2}
  \inf_K (q_{\ep_j} - \tilde V)=q_{\ep_j}(E_{\ep_j})-\tilde V(E_{\ep_j}),
\end{equation*}
to obtain an $E\in K$ and $\{n_j\}\subset\Sph$ such that
$E_{\ep_j}\to E$, $n_j\to\hat n$, and $q_{\ep_j}(E_{\ep_j})=q_{n_j,\ep_j}(E_{\ep_j})$.
For any $n\in \Sph$,
$\inf_K(q_{\ep_j}-\tilde V) \leq q_{n,\ep_j}(E_{\ep_j})-\tilde V(E_{\ep_j})$ and,
just as  before, $q_{n,\ep_j}(E_{\ep_j})\to V_n(E)$. Thus 
\[
\limsup_{j\to \infty} \, \inf_K(q_{\ep_j}-\tilde V) \leq V_n(E)-\tilde V(E),\quad\mbox{for every }n\in \Sph ,
\]
and in turn
\begin{equation}\label{limsup2}
\limsup_{j\to \infty} \, \inf_K (q_{\ep_j}-\tilde V)\leq\min_{n\in\Sph}V_n(E)-\tilde V(E)=0.
\end{equation}
On the other hand, as before $q_{n_j,\ep_j}(E_{\ep_j})\to V_{\hat n}(E)$ so that
\begin{align*}
   \liminf_{j\to \infty} \, \inf_K(q_{\ep_j}-\tilde V) &= \liminf_{j\to \infty}
\left(q_{n_j,\ep_j}(E_{\ep_j})-\tilde V (E_{\ep_j})\right)\nonumber\\
& = V_{\hat n}(E) -\tilde V(E) \geq \min_{n\in\Sph}V_{\hat n}(E) -\tilde V(E) =0.
\end{align*}
This, together with (\ref{limsup2}), implies
\begin{equation}
  \label{inflimit}
  \lim_{j\to \infty} \inf_{K}(q_{\ep_j}-V) = 0.
\end{equation}
Equations \eqref{suplimit} and \eqref{inflimit} complete the proof.

\end{proof}

\begin{lemma}\label{lem:limit1}
  For all $n\in \Sph$, $q_{n,\ep} \longrightarrow V_n$, as $\ep \to 0$,  uniformly on compact subsets of
$\RR^{3\times 3}_{{\rm sym}}$. 
\end{lemma}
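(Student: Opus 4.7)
The plan is to expand $W_{n,\ep}$ around its zero set via the representation $W_{n,\ep}(F)=\widetilde W_{n,\ep}(FF^T)$ and then read off the limit. First, I would compute
\[
(I+\ep E)(I+\ep E)^T - L_{n,\ep} = 2\ep(E-U_n) + O(\ep^2),
\]
uniformly in $E$ on compact subsets of $\Rds$. For the left-hand side I use $E^T=E$ to get $(I+\ep E)(I+\ep E)^T = I + 2\ep E + \ep^2 E^2$, and for $L_{n,\ep}$ I expand $(1+\ep)^2$ and $(1+\ep)^{-1}$ so that
\[
L_{n,\ep} = I + \ep(3n\otimes n - I) + O(\ep^2) = I + 2\ep U_n + O(\ep^2),
\]
with an $O(\ep^2)$ remainder independent of $n$.

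Next, since $\widetilde W_{n,\ep}$ attains its minimum value $0$ at $B = L_{n,\ep}$, both $\widetilde W_{n,\ep}(L_{n,\ep})=0$ and $D\widetilde W_{n,\ep}(L_{n,\ep})=0$. Using the integral form of Taylor's theorem at order two,
\[
\widetilde W_{n,\ep}(B) = \tfrac12 D^2\widetilde W_{n,\ep}(L_{n,\ep})[B-L_{n,\ep}]^2 + \int_0^1 (1-t)\bigl(D^2\widetilde W_{n,\ep}(L_{n,\ep}+t(B-L_{n,\ep})) - D^2\widetilde W_{n,\ep}(L_{n,\ep})\bigr)[B-L_{n,\ep}]^2\,dt.
\]
Substituting $B = (I+\ep E)(I+\ep E)^T$, the leading term becomes
\[
\tfrac12 D^2\widetilde W_{n,\ep}(L_{n,\ep})\bigl[2\ep(E-U_n)+O(\ep^2)\bigr]^2 = 2\ep^2 D^2\widetilde W_{n,\ep}(L_{n,\ep})[E-U_n]^2 + O(\ep^3),
\]
with $O$ uniform in $E$ on compacts, since $D^2\widetilde W_{n,\ep}(L_{n,\ep})$ is bounded for $\ep$ small.

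For the remainder, joint continuity of $D^2\widetilde W_{n,\ep}(B)$ in $(\ep, B)$ in a neighborhood of $(0,I)$ (guaranteed by the smoothness of $B\mapsto \tr(BL_{n,\ep}^{-1})$, $B\mapsto(\det B)^{\alpha}$ on positive-definite $B$, the $C^2$ regularity of $\Wvol$ from \eqref{Wvol}, and the smooth dependence of $L_{n,\ep}^{-1}$ on $\ep$) yields a modulus of continuity for $D^2\widetilde W_{n,\ep}$ that is uniform in $\ep$ near $0$. Thus the remainder is $o(|B-L_{n,\ep}|^2) = o(\ep^2)$ uniformly in $E$ on any compact $K \subset \Rds$ and in $\ep$ small. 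Dividing by $\ep^2$ gives
\[
q_{n,\ep}(E) = 2 D^2\widetilde W_{n,\ep}(L_{n,\ep})[E-U_n]^2 + o(1),
\]
uniformly on $K$. Finally, by continuity of $\ep\mapsto D^2\widetilde W_{n,\ep}(L_{n,\ep})$ at $\ep=0$ (a consequence of the same joint smoothness together with $L_{n,\ep}\to I$), this expression converges uniformly on $K$ to $2D^2\widetilde W_{n,0}(I)[E-U_n]^2 = V_n(E)$.

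The one slightly delicate point, and hence the main obstacle, is showing the Taylor remainder is $o(\ep^2)$ \emph{uniformly} in both $E\in K$ and $\ep$. Pointwise $o$-smallness of $D^2\widetilde W_{n,\ep}$ at $L_{n,\ep}$ is immediate, but to upgrade to uniform smallness I need the modulus of continuity of $D^2\widetilde W_{n,\ep}$ near $L_{n,\ep}$ to not degenerate as $\ep\to 0$; this is precisely what the joint continuity in $(B,\ep)$ provides on a fixed neighborhood of $(I,0)$, after which a standard compactness argument closes the estimate. Everything else is routine algebra and continuity.
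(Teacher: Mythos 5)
Your proof follows essentially the same route as the paper: rewrite $W_{n,\ep}(I+\ep E)=\widetilde W_{n,\ep}((I+\ep E)^2)$, expand $(I+\ep E)^2-L_{n,\ep}=2\ep(E-U_n)+O(\ep^2)$, Taylor-expand $\widetilde W_{n,\ep}$ about its minimizer $L_{n,\ep}$, and pass to the limit using regularity of $D^2\widetilde W_{n,\ep}$ near $(I,0)$. The only cosmetic difference is that you close the uniformity-of-remainder issue via the integral form of Taylor's theorem and joint continuity in $(B,\ep)$, whereas the paper adds and subtracts $2D^2\widetilde W_{n,0}(L_{n,\ep})[\cdot]^2$ and leaves the resulting two differences to ``elementary computations''; your version makes that step more explicit but is not a genuinely different argument.
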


\begin{proof}
Let $K\subset \RR^{3\times 3}_{{\rm sym}}$ be compact. Recall from \eqref{tildeW} 
that for every $E\in\RR^{3\times 3}_{{\rm sym}}$,
$W_{n,\ep}(I+\ep E) = \widetilde W_{n,\ep}((I+\ep E)^2)$ and that
$(I+\ep E)^2-L_{n,\ep} = 2\ep (E-U_n)+o(\ep)$, from Remark \ref{rem:Un}.
Thus, for every $E\in K$ we have by Taylor expansion that
  \begin{align*}
q_{n,\ep}(E)
& =\frac1{2\ep^2}D^2\widetilde W_{n,\ep}(L_{n,\ep})[(I+\ep E)^2-L_{n,\ep}]^2
+\frac1{\ep^2}o(|(I+\ep E)^2-L_{n,\ep}|^2)\\
& = 2D^2\widetilde W_{n,\ep}(L_{n,\ep})[E-U_n+o(1)]^2  + o(1).
\end{align*}
Adding and subtracting $2D^2\widetilde W_{n,0}(L_{n,\ep}))[  E-U_n+o(1)]^2$ and taking the
supremum over $E\in K$ gives
\begin{align*} 
\sup_K|q_{n,\ep}-V_n| 
&\leq 2\sup_{E\in K} \left\{\left|(D^2\widetilde W_{n,\ep}(L_{n,\ep})
-D^2\widetilde W_{n,0}(L_{n,\ep}))[  E-U_n+o(1)]^2\right|\right. \\
&+\left. \left| D^2\widetilde W_{n,0}(L_{n,\ep})[  E-U_n+o(1)]^2-\frac{V_n(E)}2+o(1)\right|\right\} \\
&\leq C\sup_{M\in K} \left|D^2\widetilde W_{n,\ep}(M)-D^2\widetilde W_{n,0}(M)\right|\\
&\qquad \qquad \qquad +C \left|D^2\widetilde W_{n,0}(L_{n,\ep})-D^2\widetilde W_{n,0}(I)\right|+o(1),
  \end{align*}
where in the last inequality we have used the definition of $V_n$.
By elementary computations one can verify that the 
summands on the right side of the last inequality
tend to $0$ as $\ep\to0$.
\end{proof}


\section{Proofs of the main results}\label{sec:cpt}

For the sequential characterization of $\Gamma$-convergence, as 
well as to prove that almost minimizers of $\mathscr E_\ep$ 
converge to minimizers of $\overline{\mathscr E}$, we need 
to establish that the functionals $\mathscr E_\ep$ are equicoercive; 
this is the content of Proposition~\ref{prop:compactness}. 
Before proving it, we collect some useful properties of the function 
$g_p$ defined in \eqref{eq:gp}.

\begin{lemma}
\label{lemma:gp}
The function $g_p$ satisfies the following:
\begin{itemize}
\item[(i)] $g_p$ is convex;
\item[(ii)] $g_p(s+t)\leq C(g_p(s)+t^2)$ for all $s,t\geq0$, where
 $C>0$ is a constant depending only on $p$;
\item[(iii)] for each $K>0$ there exists a constant $C>0$ depending on $K$ and $p$ such that
\begin{eqnarray*}
t^2\leq C g_p(t),&& 0\leq t\leq K\\
t^p\leq C g_p(t),&& t\geq K.
\end{eqnarray*}

\end{itemize}
\end{lemma}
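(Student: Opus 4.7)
The plan is to handle each of the three claims separately, in each case leveraging the explicit piecewise expression of $g_p$ on $[0,1]$ and on $[1,\infty)$, and the fact that the two pieces match smoothly at $t=1$.

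For (i), I would verify convexity by reducing to the second derivative on each piece. On $(0,1)$, $g_p''(t) = 1 > 0$; on $(1,\infty)$, $g_p''(t) = (p-1)t^{p-2} \geq 0$ since $p>1$. The junction at $t=1$ is handled by noting that $g_p$ is continuous there (both branches equal $1/2$) and that the left and right derivatives both equal $1$, so $g_p'$ is nondecreasing on all of $[0,\infty)$ and convexity follows.

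For (ii), I would split into cases according to the position of $s+t$ relative to $1$. If $s+t \leq 1$ the elementary bound $(s+t)^2 \leq 2(s^2+t^2)$ gives the conclusion with $C=2$. If $s+t > 1$, I would apply the convexity estimate $(s+t)^p \leq 2^{p-1}(s^p+t^p)$, together with $t^p \leq 1 + t^2$, which is valid for all $t\geq 0$ when $1<p\leq 2$ (separately on $[0,1]$ and on $[1,\infty)$). These manipulations produce constant terms that one absorbs into the right-hand side by exploiting $g_p(s)\geq g_p(1)=1/2$ when $s\geq 1$, and by splitting the case $s<1$ further according to whether $t$ is bounded below by a universal positive constant (forced by $s+t>1$ and $s$ small). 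The main delicate point, and what I expect to be the only real obstacle in the whole lemma, is the regime $s\geq 1$ with $t$ small: here $t^2$ can be arbitrarily close to zero while $(s+t)^p$ still genuinely contains a $t^p$ piece, so I must rely on $g_p(s)\geq 1/2$ to swallow the additive constants that arise from $t^p \leq 1+t^2$.

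For (iii), the two inequalities reduce to bounded ratios on the indicated sets. For $0\leq t\leq K$, if $K\leq 1$ the identity $t^2 = 2g_p(t)$ is exact; if $K>1$, split $[0,K]$ into $[0,1]$ (again $t^2 = 2 g_p(t)$) and $[1,K]$ (where $t^2 \leq K^2$ and $g_p(t) \geq g_p(1) = 1/2$), so that $C = \max(2, 2K^2)$ works. For $t\geq K$, compute $t^p/g_p(t) = 2t^{p-2}$ on $[K,1]$ (bounded by $2K^{p-2}$ when $K\leq 1$) and $t^p/g_p(t) = p/(1 + p(1/2-1/p)t^{-p})$ on $[1,\infty)$, which is continuous, positive, and tends to $p$ as $t\to\infty$, hence bounded above on $[\max(K,1),\infty)$. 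Combining the two regimes yields the claim.
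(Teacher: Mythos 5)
The paper does not actually supply a proof of this lemma; it simply records that the proof is ``elementary and left to the reader,'' so there is no authorial argument to compare against. Your proof is correct and complete: part~(i) is handled cleanly by checking that $g_p$ is $C^1$ at the matching point $t=1$ (both one-sided derivatives equal $1$, both one-sided values equal $1/2$) with nonnegative second derivative on each open piece; part~(iii) is a direct bounded-ratio check on the relevant compact and noncompact intervals, and the formula $t^p/g_p(t)=p/\bigl(1+(p/2-1)t^{-p}\bigr)$ on $[1,\infty)$ is indeed bounded (between $p$ and $2$). For part~(ii), the scheme $(s+t)^p\le 2^{p-1}(s^p+t^p)$ together with $t^p\le 1+t^2$ is sound, and the resulting constants do get absorbed as you indicate: when $s\ge 1$, $g_p(s)\ge 1/2$ gives $s^p+1\le 4\,g_p(s)$; when $s<1$ and $s+t>1$, either $t\ge 1/2$ (so $t^2\ge 1/4$ absorbs the additive constant) or $t<1/2$ forces $s>1/2$ (so $g_p(s)=s^2/2>1/8$ does). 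One small imprecision worth noting: the parenthetical remark that $s+t>1$ with $s<1$ forces $t$ to be ``bounded below by a universal positive constant'' is not literally true when $s$ is close to $1$; what saves you, as you then correctly exploit, is the dichotomy in $t$ versus a lower bound on $g_p(s)$, not a universal lower bound on $t$ alone. With that clarification the argument is airtight.
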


The proof of this lemma is elementary and left to the reader.



\begin{proof}[Proof of Proposition \ref{prop:compactness}]
We may assume that $u\in W^{1,p}_{h}$, otherwise the result follows trivially. 
Supposing that the bound
\begin{equation}
\label{eq:compactness0}
\int_{\Omega}g_p(\vert\ep\nabla u(x)\vert)dx\leq C\ep^2(1+\mathscr E_\ep(u))
\end{equation}
holds,  Proposition \ref{prop:compactness} can be established 
by estimating $\Vert\ep\nabla u\Vert^{p}_{p}$. Indeed,
by H\"older's inequality and the definition of $g_p$ in (\ref{eq:gp}),
\[
\int_{\left\{x\in\Omega:\vert\ep\nabla u(x)\vert\leq1\right\}}\vert\ep\nabla u(x)\vert^pdx 
\leq C\left(\int_{\left\{x\in\Omega:\vert\ep\nabla u(x)\vert\leq1\right\}}g_p(\vert\ep\nabla u(x)\vert)dx\right)^{\frac{p}{2}}.
\]
Thus, using \eqref{eq:compactness0} and the fact that
$t^{p/2}\leq t+1$ for every $t\geq0$, we obtain 
\begin{equation}
\label{eq:compactness1}
  \int_{\left\{x\in\Omega:\vert\ep\nabla u(x)\vert\leq1\right\}}\vert\ep\nabla u(x)\vert^pdx \leq
C\ep^p\left(1+\mathscr E_\ep(u)\right)^{\frac{p}{2}} 
\leq  2C\ep^p (1+\mathscr E_\ep(u)),
\end{equation}
because $\mathscr E_\ep(u)\geq0$.
On the other hand, by Lemma~\ref{lemma:gp} (iii) and  \eqref{eq:compactness0},
\begin{eqnarray}
\int_{\left\{x\in\Omega:\vert\ep\nabla u(x)\vert >1\right\}}\vert\ep\nabla u(x)\vert ^pdx&\leq & C \int_{\left\{x\in\Omega:\vert\ep\nabla u(x)\vert >1\right\}}g_p(\vert\ep\nabla u(x)\vert)\nonumber\\
&\leq & C\ep ^2(1+\mathscr E_\ep(u))
\leq C\ep ^p(1+\mathscr E_\ep(u)),
\label{eq:compactness2}
\end{eqnarray}
since $p\leq 2$ and $\ep$ is small. The compactness result now follows by (\ref{eq:compactness1}) and (\ref{eq:compactness2}). Hence, to complete the proof, 
we need only establish (\ref{eq:compactness0}).

Note that, by the coercivity condition (\ref{eq:coercivity}),
\[
\int_{\Omega}g_p(\dist (I+\ep\nabla u(x),\mathcal{U}_\ep))\,dx\leq C\ep ^2\mathscr E_\ep(u)
\]
and, since $\mathcal{U}_\ep$ is compact, there exist 
$R_\ep(x)U_\ep (x) \in\mathcal{U}_\ep$ 
such that the distance is achieved for a.e.~$x\in\Omega$, i.e.
\begin{equation}
\label{eq:compactness3}
\int_{\Omega}g_p(\vert I+\ep\nabla u(x)-R_\ep(x)U_\ep(x)\vert)\,dx
\leq C\ep ^2\mathscr E_\ep(u).
\end{equation}

In order to apply the rigidity result of Friesecke, James and M\"uller \cite{FrieJamesMuller} 
we need a lower bound for $g_p(\vert 1+\ep\nabla u(x)-R_\ep(x)U_\ep(x)\vert)$ 
in terms of the distance of $I+\ep\nabla u(x)$ to $SO(d)$. 
But since $g_p$ is increasing, by Lemma~\ref{lemma:gp} (ii) we infer that for $v(x)=x+\ep u(x)$,
\begin{eqnarray}\label{gp_est1}
g_p(\dist (\nabla v(x), SO(d)))&\leq & g_p(\vert \nabla v(x)-R_\ep(x)\vert)\nonumber \\
&\leq &g_p(\vert \nabla v(x)-R_\ep(x)U_\ep(x)\vert +\vert R_\ep(x)U_\ep(x)-R_\ep(x)\vert)\nonumber \\
&\leq & c\{g_p(\vert \nabla v(x)-R_\ep(x)U_\ep(x)\vert)+\vert U_\ep(x) -I\vert ^2\}
\end{eqnarray}
But $U_\ep(x)=I+\ep U(x)+o(\ep)$ for some $U(x)$ in the compact set $\mathcal{M}$ implies that
\begin{equation*}
\vert U_\ep(x) -I\vert =\ep\vert U(x)-o(1)\vert\leq c\,\ep
\end{equation*}
so that, by \eqref{eq:compactness3} and \eqref{gp_est1}, 
\begin{equation}
\label{eq:compactness4}
\int_\Omega g_p(\dist (\nabla v(x), SO(d)))\,dx
\leq c\,\ep^2(\mathscr E_\ep(u) +1).
\end{equation}
Now we may apply the modified rigidity result of \cite{FrieJamesMuller} 
(cf.~\cite[Lemma 3.1]{AgDalDe}) 
to get the existence of an $x$-independent $R_\ep\in SO(d)$ such that,
in conjunction with (\ref{eq:compactness4}),
\begin{equation}
\label{eq:compactness5}
\int_{\Omega}g_p(\vert\nabla v(x)-R_\ep\vert)\,dx
\leq c\int_{\Omega}g_p(\dist (\nabla v(x), SO(d)))\,dx
\leq c\,\ep^2(\mathscr E_\ep(u) +1).
\end{equation}
Also, by \cite[Lemma 3.3]{AgDalDe}, we infer that
\begin{equation}
\label{eq:compactness6}
\vert R_\ep -I\vert ^2
\leq c\,\ep^2 \left[\mathscr E_\ep(u) 
    + \left(\int_{\partial_D\Omega}|h|d\mathscr{H}^{n-1}\right)^2\right] 
    \leq c\,\ep^2(\mathscr E_\ep(u) +1),
\end{equation}
where $c$ now depends on $h$ and $\partial_D\Omega$ as well.
To complete the proof, note that as before, $g_p$ being increasing, 
a further application of Lemma~\ref{lemma:gp} (ii) shows that
\begin{eqnarray*}
\int_{\Omega}g_p(\vert\ep\nabla u(x)\vert)dx&\leq &\int_{\Omega}g_p(\vert I+\ep\nabla u(x)-R_\ep\vert +\vert R_\ep -I\vert )\\
&\leq &C\int_{\Omega}g_p(\vert I+\ep\nabla u(x)-R_\ep\vert) + \vert R_\ep -I\vert ^2dx\\
&\leq & C\ep ^2(I+\mathscr E_\ep(u))
\end{eqnarray*}
by (\ref{eq:compactness5}) and (\ref{eq:compactness6}), establishing (\ref{eq:compactness0}).
\end{proof}

Before proving Theorem \ref{thm:gammaconvergence},
we state two auxiliary key results which are used in the proof. 
For the proof of Lemma~\ref{lemma:bj}, 
we refer the reader to~\cite{AgDalDe}; Lemma~\ref{lemma:kristensen} 
is due to J.~Kristensen \cite{Kristensen}. 
In what follows, given a set $B\subset\mathbb{R}^d$, we denote by $1_{B}$ its characteristic function.

\begin{lemma}
\label{lemma:bj}
Let $\ep_j\rightarrow0$, as $j\rightarrow\infty$. 
Suppose that $\left\{\mathscr E_{\ep_j}(u_j)\right\}$ is bounded for some sequence 
$\{u_j\}\subset W^{1,p}(\Omega,\mathbb{R}^d)$ and that, in view of compactness, 
$u_j\rightharpoonup u$ in $W^{1,p}(\Omega,\mathbb{R}^d)$. For each $j$, define the sets
\begin{equation}
\label{eq:bj}
B_j:=\left\{x\in\Omega : \vert\nabla u_j(x)\vert\leq\frac{1}{\sqrt{\ep_{j}}}\right\}.
\end{equation}
Then, the following holds:
\begin{itemize}
\item[(i)] $|B^c_j|\to 0$ and $1_{B^c_j}\nabla u_j\to 0$ in $L^q(\Omega,\Rd)$, for all $1\leq q < p$;
\item[(ii)] $\nabla u\in L^2(\Omega,\mathbb{R}^{d\times d})$ and $1_{B_j}\nabla u_j\rightharpoonup\nabla u$ in $L^2(\Omega,\mathbb{R}^{d\times d})$.
\end{itemize}

\end{lemma}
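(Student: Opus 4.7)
The plan is to exploit the key bound \eqref{eq:compactness0} established in the proof of Proposition \ref{prop:compactness}, which yields
\[
\int_\Om g_p(|\ep_j \na u_j(x)|)\,dx \leq C\ep_j^2(1+\mathscr E_{\ep_j}(u_j)) \leq C\ep_j^2,
\]
together with the two-regime structure of $g_p$ from \eqref{eq:gp}. The quadratic branch $g_p(t)=t^2/2$ on $[0,1]$ will provide an $L^2$ bound for $\na u_j$ on the good set $B_j$, while the two branches combined will force $|B_j^c|\to 0$ at a rate proportional to $\ep_j$.

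For part (i), I would split $B_j^c = A_j\cup C_j$, where $A_j := \{1/\sqrt{\ep_j} < |\na u_j| \leq 1/\ep_j\}$ and $C_j := \{|\na u_j| > 1/\ep_j\}$. On $A_j$ one has $\sqrt{\ep_j} < |\ep_j \na u_j| \leq 1$, so $g_p(|\ep_j \na u_j|)\geq |\ep_j\na u_j|^2/2 \geq \ep_j/2$, whence $|A_j|\leq C\ep_j$. On $C_j$ instead $|\ep_j \na u_j| > 1$, so $g_p(|\ep_j \na u_j|)\geq g_p(1) = 1/2$, giving $|C_j|\leq C\ep_j^2$. Summing, $|B_j^c|\to 0$. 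Since $\{\na u_j\}$ is bounded in $L^p$ by weak convergence in $\Wp$, H\"older's inequality with exponents $p/q$ and $p/(p-q)$ then yields
\[
\int_{B_j^c} |\na u_j|^q\,dx \leq |B_j^c|^{1-q/p} \|\na u_j\|_{L^p}^q \longrightarrow 0
\]
for every $1\leq q < p$, which is (i).

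For part (ii), I would observe that on $B_j$ we have $|\ep_j \na u_j|\leq \sqrt{\ep_j} \leq 1$ for $j$ large, so $g_p(|\ep_j \na u_j|)=|\ep_j \na u_j|^2/2$, and hence
\[
\int_{B_j} |\na u_j|^2\,dx = \frac{2}{\ep_j^2}\int_{B_j} g_p(|\ep_j \na u_j|)\,dx \leq C.
\]
Thus $\{1_{B_j}\na u_j\}$ is bounded in $L^2(\Om,\Rd)$ and a subsequence converges weakly in $L^2$ to some $w$. To identify $w = \na u$, I would use the decomposition $\na u_j = 1_{B_j}\na u_j + 1_{B_j^c}\na u_j$ together with $1_{B_j^c}\na u_j \to 0$ in $L^q$ from (i) and $\na u_j \rightharpoonup \na u$ weakly in $L^p$; these together force $1_{B_j}\na u_j \rightharpoonup \na u$ weakly in $L^q$ for any $q<p$, and by uniqueness of weak limits we conclude $w=\na u \in L^2$. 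Since the limit is independent of the extraction, the full sequence converges. The one delicate point is the choice of threshold $1/\sqrt{\ep_j}$: it is precisely at this scale that the quadratic branch of $g_p$ still yields a useful $L^2$ bound on $B_j$ while keeping $|B_j^c|$ small enough to discard in the limit, so deviating from this scale would break one of the two estimates.
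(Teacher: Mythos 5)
Your proof is correct, and it reconstructs what must be the intended argument: the paper does not prove Lemma~\ref{lemma:bj} but defers to~\cite{AgDalDe}, and the natural proof is exactly the one you give, combining the key coercivity estimate~\eqref{eq:compactness0} with the two-regime structure of $g_p$. Both the measure estimate $|B_j^c|=O(\ep_j)$ (via the split into the intermediate range where the quadratic branch gives $g_p\geq\ep_j/2$ and the far range where $g_p\geq 1/2$) and the uniform $L^2$ bound for $1_{B_j}\nabla u_j$ (via the quadratic branch on $B_j$) are handled correctly, and the identification of the weak $L^2$ limit with $\nabla u$ through weak $L^q$ convergence for $q<p\leq 2$, followed by the Urysohn subsequence principle, is sound.
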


\begin{lemma}[Proposition 1.10, \cite{Kristensen}]
\label{lemma:kristensen}
Let $g:\Rd\to\RR$ be a quasiconvex function such that for some $c_1,\,c_2>0$ and $p>1$
\[
c_1|F|^p - c_2 \leq g(F) \leq c_2(1 + |F|^p),\quad\mbox{for all $F\in\Rd$.}
\]
Then there exists a nondecreasing sequence of quasiconvex functions $\psi_k:\Rd\to\RR$, 
bounded above by $g$, such that $\{\psi_k\}$ converges to $g$ pointwise and
\begin{equation*}
\psi_k(F) = a_k|F| + b_k,\quad\mbox{for all $|F|\geq r_k$,}
\end{equation*}
for some $a_k,\,r_k>0$ and $b_k\in\RR$.
\end{lemma}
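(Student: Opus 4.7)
The plan is to construct $\psi_k$ as the quasiconvex envelope of a suitably capped version of $g$, where the cap is an affine-in-$|F|$ function designed to force the linear tail. Fix sequences $a_k\uparrow+\infty$, $r_k\uparrow+\infty$, and $b_k\in\RR$, and set $\tau_k(F):=a_k|F|+b_k$, which is convex (hence quasiconvex) in $F$. The parameters are chosen so that the region $\{\tau_k\leq g\}$ is the complement of a ball exhausting $\Rd$ as $k\to\infty$, while $\tau_k>g$ inside that ball. Define
\[
h_k(F):=\min\{g(F),\tau_k(F)\}
\]
and put $\psi_k:=h_k^{qc}$. By construction $\psi_k$ is quasiconvex, and from $h_k\leq g$ together with $g^{qc}=g$ (as $g$ is itself quasiconvex), one obtains $\psi_k\leq g$.

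For the nondecreasing property, I would choose the sequences $\{a_k\}$, $\{b_k\}$, $\{r_k\}$ so that $h_k\leq h_{k+1}$ pointwise, whence $\psi_k\leq\psi_{k+1}$ by monotonicity of the quasiconvex envelope. Pointwise convergence $\psi_k(F)\to g(F)$ at any fixed $F$ would follow by taking $k$ so large that $F$ lies deep inside $\{h_k=g\}$ and observing that any test field $\varphi$ used to lower $\psi_k(F)$ below $g(F)$ must push $F+\nabla\varphi$ into the far region $\{h_k=\tau_k\}$; the $p$-growth lower bound on $g$, with $p>1$, then makes this increasingly expensive, allowing a diagonal argument combined with the $W^{1,p}$-quasiconvexity characterization of $g$ to identify the limit with $g^{qc}(F)=g(F)$.

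The main obstacle, I expect, is the linear-tail identity $\psi_k(F)=\tau_k(F)$ for $|F|\geq r_k$. The inequality $\psi_k(F)\leq\tau_k(F)$ there is immediate from $h_k(F)=\tau_k(F)$, but the reverse inequality is delicate: $\tau_k$ itself is not a global minorant of $h_k$ (it exceeds $h_k$ on the interior ball where $\tau_k>g$), so $\tau_k$ cannot be used directly as a quasiconvex candidate in the envelope. To remedy this, I would exploit that $\tau_k$ is convex and rely on the Young-measure representation of $\psi_k$: if $\nu$ is a homogeneous gradient Young measure with barycenter $F$, Jensen's inequality gives $\int\tau_k\,d\nu\geq\tau_k(F)$; combining this with the observation that, for $|F|\gg r_k$, a dominant fraction of the mass of $\nu$ must lie in the outer region where $h_k=\tau_k$, and controlling the inner-region contribution via the superlinear lower bound on $g$, one recovers $\psi_k(F)\geq\tau_k(F)-o(1)$ as $|F|/r_k\to\infty$. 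Calibrating the parameters $a_k$, $b_k$, $r_k$ jointly so that this $o(1)$ error closes up exactly on the prescribed region, while simultaneously preserving monotonicity of $\{h_k\}$ and pointwise convergence $h_k\to g$, is the technical heart of the argument.
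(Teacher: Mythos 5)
Note first that the paper offers no proof of Lemma~\ref{lemma:kristensen}: it is quoted as Proposition~1.10 of Kristensen's article \cite{Kristensen} (``Lemma~\ref{lemma:kristensen} is due to J.~Kristensen~\cite{Kristensen}''), so there is no in-paper argument against which to compare your sketch.

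As for the proposal itself: setting $\psi_k:=\bigl(\min\{g,\tau_k\}\bigr)^{qc}$ with $\tau_k(F)=a_k|F|+b_k$ does give quasiconvex functions below $g$, and monotonicity and pointwise convergence can be arranged. The gap is exactly where you flag it, and ``calibrating'' the parameters cannot close it, because the obstruction is structural rather than quantitative. Put $h_k:=\min\{g,\tau_k\}$. For $\psi_k\to g$ pointwise you must allow $\tau_k>g$ on some ball, hence $g^*(a_k\hat F)>-b_k$ for some direction $\hat F$, where $g^*$ denotes the Legendre transform. Since $h_k^*=\max\{g^*,\tau_k^*\}$ and $\tau_k^*\equiv -b_k$ on $\{|v|\le a_k\}$, a direct computation gives $h_k^{**}(F)-a_k|F|\to-\max\{g^*(a_k\hat F),-b_k\}$ as $|F|\to\infty$ along each fixed direction $\hat F$, and this limit genuinely depends on $\hat F$ whenever $g$ is not radially symmetric. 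Thus already the \emph{convex} envelope of $h_k$ fails to have a radial linear tail, and since $h_k^{**}\le\psi_k\le\tau_k$, your $\psi_k$ is squeezed between a direction-dependent lower bound and the radial cone $\tau_k$; nothing in the sketch forces $\psi_k$ to be radially linear past any finite radius. The Young-measure/Jensen estimate you describe yields only $\psi_k(F)\ge\tau_k(F)-o(1)$ as $|F|/r_k\to\infty$, which is strictly weaker than the claimed identity on $\{|F|\ge r_k\}$. Producing the exact radial tail $\psi_k(F)=a_k|F|+b_k$ is the genuinely nontrivial content of Kristensen's proposition and requires a construction different from the one you propose.
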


\begin{proof}[Proof of Theorem \ref{thm:gammaconvergence}]
It suffices to show that, fixing a vanishing sequence $\{\ep_j\}$, 
$\{\mathscr E_{\ep_j}\}$ $\Gamma$-converges to $\overline{\mathscr E}$.

To establish the $\Gamma$-$\liminf$ inequality, we follow the 
lines of the proof of the $\Gamma$-$\liminf$ inequality in \cite{Schmidt} with
some modifications.

Let $u_j\rightharpoonup u$ in $W^{1,p}(\Omega,\mathbb{R}^d)$ and assume 
that $\liminf\mathscr E_{\ep_j}(u_j)<\infty$, as otherwise, the result follows trivially. 
In particular, we may assume that, up to a subsequence, $\mathscr E_{\ep_j}(u_j)\leq c<\infty$,
so that, in particular, $u_j$, $u\in W^{1,p}_h$, and
\[
\mathscr E_{\ep_j}(u_j)\geq
\int_{B_j}\frac{1}{\ep^{2}_{j}}
W_{\ep_j}\left(\sqrt{(I+\ep_j\nabla u_j)^T(I+\ep_j\nabla u_j)}\right)\,dx,
\]
in view of frame-indifference. Note that the above integral 
is taken over the set $B_j$, defined in \eqref{eq:bj}, where we may assume that 
the determinant of $I+\ep_j\nabla u_j$ is bounded away from zero and apply the 
polar decomposition.
It is useful to introduce the function $\zeta:\Rd\to\Rds$ defined as
\begin{equation}\label{eq:zeta}
\zeta(F):=\sqrt{(I+F)^T(I+F)}-I-{\rm sym}\,F,
\end{equation}
which satisfies
\begin{equation}\label{eq:prop_h}
\zeta(F)\leq c\min\{|F|,|F|^2\},\qquad\mbox{for every }F\in\Rd,
\end{equation}
and to use the notation \eqref{lin_lim_1}-\eqref{eq:fep} to write
\begin{align*}
W_{\ep_j}\left(\sqrt{(I+\ep_j F)^T(I+\ep_j F)}\right)&=
       W_{\ep_j}\left(I+\ep_j\left({\rm sym}\,F+\frac{\zeta(\ep_j F)}{\ep_j}\right)\right)\\
   &=\ep_j^2 f_{\ep_j}\left(F+\frac{\zeta(\ep_j F)}{\ep_j}\right).
\end{align*}
Note also that we may exploit the boundedness of 
the sequence $\{1_{B_j}\na u_j\}$ in $L^2(\Om,\Rd)$ given by Lemma \ref{lemma:bj} 
to get that
\begin{equation}\label{eq:E_jf_j}
\mathscr E_{\ep_j}(u_j)\geq\int_{B_j}
      f_{\ep_j}\left(\na u_j+\frac{\zeta(\ep_j\na u_j)}{\ep_j}\right)\,dx.
\end{equation}
Consider the function $f$ which is the uniform limit of the sequence
$\{f_{\ep_j}\}$ on compact subsets of $\Rd$.
To employ the approximation result of Lemma~\ref{lemma:kristensen} which
requires quadratic growth from below,
fix $\delta>0$ arbitrarily and consider the function
\begin{equation*}
g(F):=f^{qc}(F)+\delta\vert F\vert ^2.
\end{equation*}
Note that $g$ is quasiconvex and satisfies
\[
\delta\vert F\vert ^2\leq g(F)\leq c(1+\vert F\vert ^2),\qquad
\mbox{for all }F\in\Rd. 
\]
Then, by Lemma~\ref{lemma:kristensen},
there exists a nondecreasing sequence of quasiconvex functions $\psi_k\leq g$, 
converging to $g$ pointwise and such that 
$\psi_k(F)=a_k\vert F\vert + b_k$ for all $\vert F\vert\geq r_k$, 
for some $a_k, r_k>0$ and $b_k\in\mathbb{R}$.
Now, observe that for every $k$ there exists $\hat j=\hat j(\delta,k)$
such that
\begin{equation}\label{eq:f+delta}
f_{\ep_j}(F)+\delta|F|^2\geq\psi_k(F)-\frac 1k,\quad\mbox{for every }F\in\Rd,\,
j\geq\hat j.
\end{equation}
This is because for every $k$ there exists $\hat c=\hat c(\delta,k)\geq r_k$ such that
\begin{equation*}
\delta|F|^2\geq a_k|F|+b_k-\frac1k
            =\psi_k(F)-\frac1k,\quad\qquad\mbox{for every }|F|>\hat c.
\end{equation*}
Moreover, since $f_{\ep_j}\to f$ uniformly on $\Om_k:=\{F\in\Rd:|F|\leq\hat c\}$, 
there exists $\hat j=\hat j(\delta,k)$ such that
\begin{equation*}
f_{\ep_j}(F)+\delta|F|^2\geq f(F)+\delta|F|^2-\frac1k\geq g^{qc}(F)-\frac1k
\geq\psi_k(F)-\frac1k,   
\end{equation*}
for every $F\in\Om_k$ and all $j\geq\hat j$.

Using (\ref{eq:E_jf_j}) and (\ref{eq:f+delta}) we can then write
\begin{equation}\label{eq:fine-1}
\mathscr E_{\ep_j}(u_j)\geq\int_{B_j}
 \left\{\psi_k\left(\na u_j+\frac{\zeta(\ep_j\na u_j)}{\ep_j}\right)
     -\frac 1k-\delta\left|\na u_j+\frac{\zeta(\ep_j\na u_j)}{\ep_j}\right|^2\right\}\,dx.
\end{equation}     
Focusing on the first term on the right-hand side of (\ref{eq:fine-1}), note that
\begin{multline}\label{eq:fine0}
\int_{B_j}\psi_k\left(\na u_j+\frac{\zeta(\ep_j\na u_j)}{\ep_j}\right)\,dx\\
   \geq\int_{B_j}\psi_k(\na u_j)\,dx-
   \int_{B_j}\left|\psi_k(\na u_j)-\psi_k\left(\na u_j+\frac{\zeta(\ep_j\na u_j)}{\ep_j}\right)\right|\,dx,
\end{multline}
and that the second summand on the right-hand side of (\ref{eq:fine0}) is bounded by
\begin{multline*}
\int_{\{|\na u_j|\leq M\}}
\left|\psi_k(\na u_j)-\psi_k\left(\na u_j+\frac{\zeta(\ep_j\na u_j)}{\ep_j}\right)\right|\,dx\\
+\int_{B_j\cap\{|\na u_j|> M\}}\left(|\psi_k(\na u_j)|
 +\left|\psi_k\left(\na u_j+\frac{\zeta(\ep_j\na u_j)}{\ep_j}\right)\right|\right)\,dx,
\end{multline*}
for a fixed $M>0$. Therefore, on the one hand, 
since each $\psi_k$ is quasiconvex and hence 
locally Lipschitz (see \cite[Theorem 2.31]{Dacorogna}),
we have that
\begin{align}\label{eq:fine1}
\int_{\{|\na u_j|\leq M\}}
\bigg|\psi_k(\na u_j)-\psi_k\bigg(\na u_j+&\frac{\zeta(\ep_j\na u_j)}{\ep_j}\bigg)\bigg|\,dx\nonumber\\
&\leq c\,\int_{\{|\na u_j|\leq M\}}\left|\frac{\zeta(\ep_j\na u_j)}{\ep_j}\right|\,dx\nonumber\\
&\leq c\,\ep_j\int_{\{|\na u_j|\leq M\}}|\na u_j|^2\,dx\leq c\,\ep_j M^2,
\end{align}
where in the second inequality we have used (\ref{eq:prop_h}) and $c=c(k,M)$.
On the other hand, since $\psi_k\leq\tilde a_k|\cdot|+\tilde b_k$ for suitable
constants $\tilde a_k$, $\tilde b_k$, then
\begin{multline}\label{eq:fine2}
\int_{B_j\cap\{|\na u_j|> M\}}\left(|\psi_k(\na u_j)|
 +\left|\psi_k\left(\na u_j+\frac{\zeta(\ep_j\na u_j)}{\ep_j}\right)\right|\right)\,dx\\
\leq\int_{\{|\na u_j|> M\}}\big[(2+c)\tilde a_k|\na u_j|+2\tilde b_k\big]\,dx,
\end{multline}
where we have also used (\ref{eq:prop_h}).
Now, by the equiintegrability of $\{|\na u_j|\}$, we can choose $M=M_k$ such that
\begin{equation*}
\int_{\{|\na u_j|> M_k\}}\big[(2+c)\tilde a_k|\na u_j|+2\tilde b_k\big]\,dx\leq\frac1{2k}. 
\end{equation*}
Also, up to a bigger $\hat j$, we can suppose that
$c\,\ep_j M^2_k\leq1/(2k)$ for every $j\geq\hat j$, 
so that (\ref{eq:fine0}) and inequalities (\ref{eq:fine1}) and (\ref{eq:fine2})
(with $M_k$ in place of $M$) yield
\begin{equation}\label{eq:fine3}
\int_{B_j}\psi_k\left(\na u_j+\frac{\zeta(\ep_j\na u_j)}{\ep_j}\right)\,dx\\
   \geq\int_{B_j}\psi_k(\na u_j)\,dx-\frac1k.
\end{equation}
Going back to (\ref{eq:fine-1}), observe that
\begin{equation}\label{eq:fine4}
\int_{B_j}\delta\left|\na u_j+\frac{\zeta(\ep_j\na u_j)}{\ep_j}\right|^2\,dx
\leq c\,\delta\int_{\Om}1_{B_j}|\na u_j|^2\,dx\leq c\,\delta,
\end{equation}
in view of (\ref{eq:prop_h}) and the 
boundedness of $\{1_{B_j}\na u_j\}$ in $L^2(\Om,\Rd)$.
Inequalities (\ref{eq:fine-1}), (\ref{eq:fine3}), and (\ref{eq:fine4}) give
\begin{equation}\label{eq:fine5}
\mathscr E_{\ep_j}(u_j)\geq\int_{B_j}\psi_k(\na u_j)\,dx-\frac ck-c\,\delta.
\end{equation}
Concentrating on the term involving $\psi_k$, let us write
\begin{equation}\label{eq:fine6}
\int_{B_j}\psi_k(\nabla u_j)\,dx = \int_{\Omega}\psi_k(\nabla u_j)\,dx
                   -\int_{B^{c}_{j}}\psi_k(\nabla u_j)\,dx,
\end{equation}
and note that
\[
\int_{B^{c}_{j}}\psi_k(\nabla u_j)\,dx
     \leq\tilde a_k\int_{B^{c}_{j}}\vert\nabla u_j\vert\,dx 
   +\tilde b_k\vert B^{c}_{j}\vert
\leq\tilde a_k\vert B^{c}_{j}\vert ^{\frac{p-1}{p}}\Vert\nabla u_j\Vert _p 
+\tilde b_k\vert B^{c}_{j}\vert,
\]
where we have used H\"older's inequality. Substituting into (\ref{eq:fine6}),
since $\{u_j\}$ is uniformly bounded in $W^{1,p}(\Omega,\mathbb{R}^d)$, 
inequality (\ref{eq:fine5}) now reads
\begin{equation}\label{eq:fine7}
\mathscr E_{\ep_j}(u_j)\geq
\int_{\Omega}\psi_k(\nabla u_j)\,dx-c\,\tilde a_k\vert B^{c}_{j}\vert^{\frac{p-1}{p}}
-\tilde b_k\vert B^{c}_{j}\vert -\frac ck-c\,\delta.
\end{equation}
We may now take the $\liminf$ over $j$ in (\ref{eq:fine7}) to infer that, 
since $\vert B^{c}_{j}\vert\rightarrow 0$ as $j\rightarrow\infty$,
\begin{equation}\label{eq:fine8}
\liminf_{j} \mathscr E_{\ep_j}(u_j)
   \geq\int_{\Omega}\psi_k(\nabla u)\,dx -\frac ck- c\,\delta,
\end{equation}
where we have used the fact that for each $k$, $\psi_k$ is a 
quasiconvex function satisfying a linear growth, and therefore 
the functional $w\mapsto\int_{\Omega}\psi_k(\nabla w)\,dx$ is sequentially weakly lower 
semicontinuous in $W^{1,1}(\Omega,\mathbb{R}^d)$.

Having eliminated $j$ in \eqref{eq:fine8}, 
we may now take the limit in $k$ and we can deduce, by monotone convergence, that
\begin{equation*}
\liminf_{j} \mathscr E_{\ep_j}(u_j)\geq 
\int_{\Omega}f^{qc}(\nabla u)\,dx + \delta\int_{\Omega}\vert\nabla u\vert ^2\,dx - c\,\delta.
\end{equation*}
However, $\nabla u\in L^{2}(\Omega,\Rd)$ by Lemma~\ref{lemma:bj} (ii) and $u\in W^{1,p}_h$. 
Therefore $u\in W^{1,2}_h$ and in turn
$ \int_{\Omega}f^{qc}(\nabla u)\,dx=\overline{\mathscr E}(u)$. 
Finally, since $\delta$ is arbitrary,
the $\Gamma$-$\liminf$ inequality follows by letting $\delta\to0$.

For the $\Gamma$-$\limsup$ inequality, we need to establish that 
$\mathscr E''(u)\leq\overline{\mathscr E}(u)$ for all 
$u\in W^{1,p}(\Omega,\mathbb{R}^d)$, where we recall that
\[
\mathscr E''(u):=\Gamma\mbox{-}\limsup_{j\rightarrow\infty}\mathscr E_{\ep_j}(u)=\inf\left\{\limsup_{j\rightarrow\infty}\mathscr E_{\ep_j}(u_j): u_j\rightharpoonup u\mbox{ in $W^{1,p}(\Omega,\mathbb{R}^d)$}\right\}.
\]
In fact, it suffices to prove that $\mathscr E''(u)\leq\overline{\mathscr E}(u)$ for all $u\in W^{1,2}_h$ as otherwise the result follows trivially.

First, note that the functional $\overline{\mathscr E}$ is continuous on $W^{1,2}_h$ 
in the strong topology of $W^{1,2}(\Omega,\mathbb{R}^d)$.
Indeed, letting $u_k, u\in W^{1,2}_h$, 
\begin{align}
\vert\overline{\mathscr E}(u_k)-\overline{\mathscr E}(u)\vert 
&\leq\int_{\Omega}\vert f^{qc}(\nabla u_k)-f^{qc}(\nabla u)\vert\,dx\nonumber\\
&\leq c\,\int_{\Omega}(1+\vert\nabla u_k\vert +\vert\nabla u\vert)\vert\nabla u_k 
    -\nabla u\vert\,dx\label{eq:stronglycts}\\
&\leq c\,\Vert\nabla u_k-\nabla u \Vert_{2},\nonumber
\end{align}
where the second inequality follows from \cite[Proposition 2.32]{Dacorogna}.
This continuity property allows us to work on the smaller space $W^{1,\infty}(\Omega,\mathbb{R}^d)$. 

For convenience, let us define the functional $\mathscr F:W^{1,\infty}\rightarrow\overline{\mathbb{R}}$ given by
\begin{equation*}
\label{eq:Flimit}
\mathscr F(u):=\left\{\begin{array}{ll}\int_{\Omega} f(\nabla u)\,dx, & u\in W^{1,\infty}_h\\
\,&\,\\
+\infty ,& u\in W^{1,\infty}(\Omega,\mathbb{R}^d)\setminus W^{1,\infty}_h .\end{array}\right. 
\end{equation*}
By Theorem \ref{thm:relaxation}, the sequential weak* 
lower semicontinuous envelope of $\mathscr F$ in $W^{1,\infty}(\Omega,\mathbb{R}^d)$ 
is given by
\begin{equation*}
\label{eq:Frel}
\overline{\mathscr F}(u):=\left\{\begin{array}{ll}\int_{\Omega} f^{qc}(\nabla u)\,dx, & u\in W^{1,\infty}_h\\
\,&\,\\
+\infty ,& u\in W^{1,\infty}(\Omega,\mathbb{R}^d)\setminus W^{1,\infty}_h .\end{array}\right. 
\end{equation*}
We now claim that 
$\mathscr E''(u)\leq\overline{\mathscr F}(u)$ for all 
$u\in W^{1,\infty}(\Omega,\mathbb{R}^d)$. We first establish that
\begin{equation}
\label{eq:gamma7}
\mathscr E''(u)\leq\mathscr F(u),\quad\mbox{for all $u\in W^{1,\infty}(\Omega,\mathbb{R}^d)$}.
\end{equation}
Indeed, we may assume that $u\in W^{1,\infty}_h$ as otherwise there is nothing to prove. 
Note that, for $u\in W^{1,\infty}_h$, the range of $\nabla u$ is compact and, 
therefore, $f_{\ep_j}(\nabla u(x))\rightarrow f(\nabla u(x))$ uniformly,
where $f_\ep$, $f$ are defined in \eqref{eq:fep}-\eqref{eq:fonly}. Then,
\begin{align*}
  \mathscr E''(u)&:=\inf\left\{\limsup_{j\to \infty}\mathscr E_{\ep_j}(u_j): u_j\rightharpoonup u
\mbox{ in $W^{1,p}(\Omega,\mathbb{R}^d)$}\right\}   
    \leq  \limsup_{j\to \infty}\mathscr E_{\ep_j}(u) \\
&= \lim_{j\to\infty}\frac{1}{\ep^{2}_{j}}\int_{\Omega}W_{\ep_j}(I+\ep_j\nabla u(x))\,dx
=\int_{\Omega}f(\nabla u(x))\,dx = \mathscr F(u),
\end{align*}
for every $u\in W^{1,\infty}_h$, proving (\ref{eq:gamma7}). 
Next, note that $\mathscr E''$ is sequentially weakly 
lower semicontinuous in $W^{1,p}(\Omega,\mathbb{R}^d)$,
since it is an upper $\Gamma$ limit, 
and therefore it is also sequentially weak* lower 
semicontinuous in $W^{1,\infty}(\Omega,\mathbb{R}^d)$. But $\overline{\mathscr F}$ is the largest 
functional below $\mathscr F$ enjoying this lower semicontinuity property and, hence,
\begin{equation}
\label{eq:gamma8}
\mathscr E''(u)\leq\overline{\mathscr F}(u),
\quad\mbox{for all $u\in W^{1,\infty}(\Omega,\mathbb{R}^d)$},
\end{equation}
establishing our claim. We now show that $\mathscr E''(u)\leq\overline{\mathscr E}(u)$ 
for all $u\in W^{1,2}_h$ proving the $\Gamma$-$\limsup$ inequality. 
Let $u\in W^{1,2}_h$ and, by \cite[Proposition A.2]{AgDalDe},
consider a sequence $u_k\in W^{1,\infty}_h$ such that $u_k\rightarrow u$ 
(strongly) in $W^{1,2}(\Omega,\mathbb{R}^d)$; 
in particular, $u_k\rightharpoonup u$ in $W^{1,p}(\Omega,\mathbb{R}^d)$. 
Then, by the lower semicontinuity of $\mathscr E''$, 
and by \eqref{eq:stronglycts} and \eqref{eq:gamma8}, 
\begin{align*}
  \mathscr E''(u)&\leq \liminf_{k\to\infty}\mathscr E''(u_k)
\leq  \liminf_{k\to\infty}\overline{\mathscr F}(u_k)\\
&= \liminf_{k\to\infty}\int_{\Omega}f^{qc}(\nabla u_k(x))\,dx
=\int_{\Omega}f^{qc}(\nabla u(x))\,dx =\overline{\mathscr E}(u),
\end{align*}
for every $u\in W^{1,2}_h$. This completes the proof. 
\end{proof}

\begin{proof}[Proof of Theorem \ref{main_thm}]
Define the functionals $\mathscr G_\ep$, $\overline{\mathscr G}:W^{1,p}(\Om,\RR^d)\to(-\infty,\infty]$ as
\[
\mathscr G_\ep:=\mathscr E_\ep-\mathscr L,\qquad\qquad
\overline{\mathscr G}:=\overline{\mathscr E}-\mathscr L,
\]
so that, by the definition of $\mathscr E_\ep$ and $\overline{\mathscr E}$, the hypothesis of the theorem
can be rewritten as
\[
\lim_{\ep\to0}\mathscr G_\ep(u_\ep)=\lim_{\ep\to0}m_\ep,\qquad\qquad m_\ep:=\inf_{W^{1,p}_h}\mathscr G_\ep.
\]
By Theorem \ref{thm:gammaconvergence} and \cite[Proposition 6.21]{Dal_Maso_book},
we infer that 
\begin{equation}\label{eq:Gammacvg_G}
\{\mathscr G_\ep\}\ \ \ \Gamma\mbox{-converges to }\ \ \overline{\mathscr G}\ \ 
\mbox{ in the weak topology of }\ W^{1,p}(\Om,\RR^d).
\end{equation}
Moreover, from Proposition \ref{prop:compactness} we deduce that $\{\mathscr G_\ep\}$ is a weakly-equicoercive 
sequence of functionals in $W^{1,p}(\Om,\RR^d)$, because $p>1$ and,
in view of Poincar\'e's inequality, 
\begin{align*}
\|u\|_{W^{1,p}}^p&\leq c(1+\mathscr G_\ep(u)+\mathscr L(u))\\
                 &\leq c(1+\mathscr G_\ep(u)+\|u\|_{W^{1,p}}),\quad\mbox{for every }u\in W^{1,p}_h.
\end{align*}
By standard $\Gamma$-convergence arguments, the equicoercivity of $\{\mathscr G\ep\}$
and convergence (\ref{eq:Gammacvg_G}) ensure that $m_\ep\to m$ (see \cite[Theorem 7.8]{Dal_Maso_book}).
Another standard argument then shows that, up to a subsequence, $u_\ep\rightharpoonup u$ weakly  
in $W^{1,p}(\Om,\RR^d)$, where $u$ is a minimizer of $\overline{\mathscr G}$. 
\end{proof}

\begin{proof}[Proof of Corollary \ref{cor:lowen}]
By using the notation (\ref{eq:Eepsilon})-(\ref{eq:Erel}), we can rewrite the 
hypothesis of the corollary as 
$\liminf_{\ep\to0}\mathscr E_{\ep}(u_{\ep})=0$.
Therefore, from Proposition~\ref{prop:compactness}, 
we obtain that, up to a subsequence,
$u_{\ep}\rightharpoonup u$ in $W^{1,p}(\Om,\RR^d)$, 
for some $u\in W^{1,p}(\Om,\RR^d)$. 
By Theorem \ref{thm:gammaconvergence},
we have that indeed $u\in W^{1,2}_h$ (recall that here $\pa_D\Om=\pa\Om$ and $h(x)=Fx$ on $\pa\Om$) and
\[
0\leq\overline{\mathscr E}(u)\leq\liminf_{\ep\to0}\mathscr E_{\ep}(u_{\ep})=0,
\]
so that ${\rm sym}\,(\na u)\in\{V^{qce}=0\}$ a.e.~in $\Om$, 
since $V^{qce}({\rm sym}\,G)=f^{qc}(G)$ for every $G\in\Rd$, by Proposition \ref{prop:qcqce}.
Now since $0\leq f^{qc}\leq c(1+|\cdot|^2)$, $f^{qc}$ is $W^{1,2}$-quasiconvex 
by \cite{BallMurat} (see Appendix for definitions), so that
\[
0\leq V^{qce}({\rm sym}\,F)=f^{qc}(F)\leq\fint_{\Om}f^{qc}(\na u)dx
=\fint_{\Om}V^{qce}(e(u))dx=0,
\]
and in turn $V^{qce}({\rm sym}\,F)=0$. 
Now, to see that $\{V^{qce}=0\}\subseteq Q^e_2\mathcal M$,
we note that
\begin{equation*}
V(E):=\lim_{\ep\to0}\frac{W_{\ep}(I+\ep E)}{\ep^2}
\geq\lim_{\ep\to0}\frac{g_p({\rm dist}(I+\ep E,\mathcal U_{\ep}))}{\ep^2}
=\lim_{\ep\to0}\frac{{\rm dist}^2(I+\ep E,\mathcal U_{\ep})}{2\ep^2}.
\end{equation*}
Thus, in view of Lemma \ref{lem:dist} below, we have that $V(E)\geq\frac12{\rm dist}^2(E,\mathcal M)$
for every $E\in\Rds$. This in particular implies that
$$
V^{qce}\geq\frac12\left({\rm dist}^2(\cdot,\mathcal M)\right)^{qce},
$$
so that, by definition of $Q^e_2\mathcal M$, if $E\in\{V^{qce}=0\}$ then $E\in Q^e_2\mathcal M$.
This concludes the proof.
\end{proof}

\begin{lemma}\label{lem:dist}
If $E\in \Rds$ and $\mathcal U_\ep$ is given by (\ref{eq:intro_wells}), then
\begin{equation}
  \label{eq:limd}
  \lim_{\ep \to 0}\frac{1}{\ep^2}{\rm dist}^2(I+\ep E, \, \mathcal U_\ep)
={\rm dist}^2(E,\mathcal M).
\end{equation}
\end{lemma}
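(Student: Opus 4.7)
My plan is to prove the two inequalities ``$\leq$'' and ``$\geq$'' in \eqref{eq:limd} separately; both follow from expanding to first order in $\ep$ and exploiting that $E$ and the elements of $\mathcal M$ are symmetric while tangent vectors to $SO(d)$ at $I$ are skew. Throughout I use that $\mathcal M$ is compact (so $\mathcal U_\ep$ is compact and the distance is attained) and that the correction $o(\ep)$ in $U_\ep = I + \ep U + o(\ep)$ is uniform in $U\in\mathcal M$.

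\textbf{Upper bound.} Pick any $U^\ast\in\mathcal M$ achieving ${\rm dist}(E,\mathcal M)$, and choose the competitor $R=I$, $U=U^\ast$ in $\mathcal U_\ep$. Then
\[
{\rm dist}(I+\ep E,\mathcal U_\ep)\leq |I+\ep E-(I+\ep U^\ast+o(\ep))|=\ep|E-U^\ast|+o(\ep),
\]
with $o(\ep)$ independent of $U^\ast$, which after squaring and dividing by $\ep^2$ yields $\limsup\frac1{\ep^2}{\rm dist}^2(I+\ep E,\mathcal U_\ep)\leq{\rm dist}^2(E,\mathcal M)$.

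\textbf{Lower bound.} Let $R_\ep\in SO(d)$ and $U_\ep=I+\ep U^{(\ep)}+o(\ep)\in\mathcal U_\ep$ attain ${\rm dist}(I+\ep E,\mathcal U_\ep)$, with $U^{(\ep)}\in\mathcal M$. The upper bound forces this distance to be $O(\ep)$, so expanding
\[
I+\ep E-R_\ep(I+\ep U^{(\ep)}+o(\ep))=(I-R_\ep)+\ep E-\ep R_\ep U^{(\ep)}+o(\ep)
\]
gives $|I-R_\ep|=O(\ep)$. Writing $R_\ep=\exp(S_\ep)$ with $S_\ep\in\Rdsk$ and $|S_\ep|=O(\ep)$, we set $B_\ep:=S_\ep/\ep$ so that $R_\ep=I+\ep B_\ep+O(\ep^2)$ with $\{B_\ep\}$ bounded in $\Rdsk$. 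Substituting back,
\[
R_\ep(I+\ep U^{(\ep)}+o(\ep))-(I+\ep E)=\ep\bigl(B_\ep+U^{(\ep)}-E\bigr)+o(\ep).
\]
Since $E-U^{(\ep)}\in\Rds$ and $B_\ep\in\Rdsk$ are orthogonal in the Frobenius inner product,
\[
\frac1{\ep^2}{\rm dist}^2(I+\ep E,\mathcal U_\ep)=|E-U^{(\ep)}|^2+|B_\ep|^2+o(1)\geq{\rm dist}^2(E,\mathcal M)+o(1),
\]
proving $\liminf\frac1{\ep^2}{\rm dist}^2(I+\ep E,\mathcal U_\ep)\geq{\rm dist}^2(E,\mathcal M)$.

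The only subtlety is justifying the expansion $R_\ep=I+\ep B_\ep+o(\ep)$ with $B_\ep$ skew and bounded; this is where I would spend the most care, but it follows from the fact that the exponential map is a $C^\infty$ diffeomorphism from a neighborhood of $0\in\Rdsk$ onto a neighborhood of $I$ in $SO(d)$, combined with the a priori bound $|I-R_\ep|=O(\ep)$ derived from the upper bound. Together, the two inequalities establish \eqref{eq:limd}.
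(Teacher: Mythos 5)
Your proof is correct and follows essentially the same strategy as the paper's: the upper bound comes from the competitor $R=I$, $U=U^\ast$, while the lower bound linearizes $SO(d)$ at $I$ and exploits the Frobenius orthogonality of skew and symmetric matrices. Your version is slightly more streamlined — by working with the bounded family $B_\ep\in\Rdsk$ rather than extracting a convergent subsequence $\frac1\ep(R_\ep-I)\to A$, you get $\liminf\geq{\rm dist}^2(E,\mathcal M)$ directly and avoid the paper's final squeeze argument showing $A=0$.
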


\begin{proof}For a fixed $E\in \Rds$, we have that
  \begin{align*}
    {\rm dist} (I+\ep E, \, \mathcal U_\ep) 
& \leq \min_{U\in\mathcal M} |I+\ep E  -(I +\ep U+o(\ep))|\\
& \leq \ep \min_{U\in\mathcal M}|E-U| + o(\ep).
  \end{align*}
Hence,
\begin{equation}
  \label{eq:d2u}
  \limsup_{\ep\to0}\frac{1}{\ep^2}{\rm dist}^2(I+\ep E, \, \mathcal U_\ep)\leq
{\rm dist}^2(E,\mathcal M).
\end{equation}
Now, let $R_{\ep}\in SO(d)$ and let $U_{\ep}\in\mathcal M$ such that
the distance between $I+\ep E$ and $\mathcal U_\ep$ is achieved for
$R=R_{\ep}$ and $U = U_{\ep}$. Since $SO(d)$ and $\mathcal M$ are bounded sets,
we have that, up to a subsequence, $R_{\ep}\to\hat R$ and $U_{\ep}\to\hat U$. Also,
\begin{align*}
  {\rm dist}(I+\ep E, \, \mathcal U_\ep) &= 
|I+\ep E -R_{\ep}(I+\ep U_{\ep}+o(\ep))|\\
& \geq |I -R_{\ep}| - |\ep E -\ep R_{\ep}U_{\ep}+o(\ep)|,
\end{align*}
from which we deduce that $\hat R=I$. Indeed, if $\hat R\neq I$, then by \eqref{eq:d2u}
\[
c\geq \frac{1}{\ep}{\rm dist}(I+\ep E, \, \mathcal U_\ep)
\geq\left\{\frac{1}{\ep}|I -R_{\ep}| -|E - R_{\ep}U_{\ep}+o(1)|\right\}\to \infty,
\]
which is absurd. Now, since $R_{\ep}\to I$ and $\Rdsk:=\{A\in\Rd \,:\, A=-A^T\}$ is the
tangent space to $SO(d)$ at $I$, we have that
\[
\lim_{\ep \to 0}\frac{1}{\ep}(R_{\ep}-I) = A,\qquad\mbox{for some }A\in\Rdsk.
\]
Hence,
\begin{align}
 \liminf_{\ep \to 0} \frac{1}{\ep^2} 
{\rm dist}^2 (I+\ep E,\, \mathcal U_\ep)   
& = \lim_{\ep \to 0} \left|\frac{1}{\ep}(I-R_{\ep})
+ E-R_{\ep}U_{\ep}\right|^2\nonumber\\
&=|A +  E - \hat U|^2 
\geq |A|^2 +  \min_{U\in \mathcal M}|E - U|^2,\label{eq:d2l}
\end{align}
where in the last passage we have also used the fact that $A$ is orthogonal to $E-\hat U\in \Rds$.
From \eqref{eq:d2u} and \eqref{eq:d2l}, we have
\begin{align*}
 |A|^2 +{\rm dist}^2(E,\mathcal M)
&\leq\liminf_{\ep \to 0} \frac{1}{\ep^2}{\rm dist}^2 (I+\ep E,\, \mathcal U_\ep) \\
&\leq\limsup_{\ep \to 0} \frac{1}{\ep^2}{\rm dist}^2 (I+\ep E,\, \mathcal U_\ep) 
\leq{\rm dist}^2(E,\mathcal M), 
\end{align*}
which implies $A = 0$ and, in turn, implies \eqref{eq:limd}.
\end{proof}

\begin{remark}
  A simple case of Lemma \ref{lem:dist} is when $\mathcal M$ contains
only the zero matrix, 
so that \eqref{eq:limd} yields 
\begin{equation}
  \label{eq:limdSOd}
  \lim_{\ep \to 0}\frac{1}{\ep^2}{\rm dist}^2(I+\ep E, \, SO(d))
= |E|^2.
\end{equation}
\end{remark}


\section{Young measure representation and strong convergence}\label{sec:equiint}

In this section we wish to discuss two improvements of our results,
in the spirit of \cite[Corollary 2.5 and Theorem 3.1]{Schmidt}, that is
a Young measure representation of the limiting functional $\overline{\mathscr E}$ and
the strong convergence of recovery sequences.
 
Recall that given a sequence $\{u_j\}\subset W^{1,p}(\Om,\RR^d)$, the set $B_j\subseteq\Om$ is defined as
\[
B_j:=\left\{x\in\Omega : \vert\nabla u_j(x)\vert\leq\frac{1}{\sqrt{\ep_{j}}}\right\},
\]
and
\[
v_j(x)=x+\ep_ju_j(x),
\]
where $\{\ep_j\}$ is a vanishing sequence. 
We start with a lemma, which will enter into the following discussion. 

\begin{lemma}
\label{lemma:equiint}
Let $u\in W^{1,2}_h$ and suppose that $\{u_j\}$ is a recovery sequence for $u$. Then
\begin{equation*}
\left\{1_{B_j}\frac{{\rm dist}^2(\nabla v_j, SO(d))}{\ep^{2}_{j}}\right\}
\end{equation*}
is equiintegrable.
\end{lemma}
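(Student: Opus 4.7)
The plan is to control the target sequence pointwise by the rescaled energy density $b_j:=W_{\ep_j}(\na v_j)/\ep_j^2$, then exploit the recovery property to deduce equiintegrability of $b_j$. For the first step I would show that, for $j$ large,
\[
1_{B_j}\,\frac{\dist^2(\na v_j,SO(d))}{\ep_j^2}\leq C(b_j+1).
\]
Since $\mathcal U_{\ep_j}=SO(d)\{I+\ep_j U+o(\ep_j):U\in\mathcal M\}$ with $\mathcal M$ compact, every element of $\mathcal U_{\ep_j}$ lies within $O(\ep_j)$ of $SO(d)$, so the triangle inequality combined with Young's inequality yields
\[
\dist^2(X,SO(d))\leq(1+\eta)\,\dist^2(X,\mathcal U_{\ep_j})+(1+\eta^{-1})|U_{\ep_j}-I|^2,
\]
where $|U_{\ep_j}-I|^2\leq c\,\ep_j^2$. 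On $B_j$ one has $\dist(\na v_j,\mathcal U_{\ep_j})\leq|\ep_j\na u_j|+O(\ep_j)\leq 1$ for large $j$, so by the coercivity (\ref{eq:coercivity}) and the quadratic shape of $g_p$ near the origin, $\dist^2(\na v_j,\mathcal U_{\ep_j})\leq C\,W_{\ep_j}(\na v_j)$ on $B_j$; dividing by $\ep_j^2$ yields the claimed bound.

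\textbf{Localized $\Gamma$-$\liminf$ and measure convergence.} The proof of the $\Gamma$-$\liminf$ half of Theorem~\ref{thm:gammaconvergence} is essentially local: it relies only on coercivity, polar decomposition on $B_j$, Kristensen's approximation $\{\psi_k\}$ and the weak-$L^1$ lower semicontinuity of $w\mapsto\int\psi_k(\na w)\,dx$, none of which invoke the boundary condition on $\pa_D\Om$. Running that argument with any open $A\subset\Om$ in place of $\Om$ yields
\[
\liminf_{j\to\infty}\int_A b_j\,dx\geq\int_A f^{qc}(\na u)\,dx \qquad \text{for every open } A\subset\Om.
\]
Applying this to $A$ and to $(\Om\setminus\bar A)^\circ$ simultaneously, together with the recovery equality $\int_\Om b_j\,dx\to\int_\Om f^{qc}(\na u)\,dx$, forces $\int_A b_j\,dx\to\int_A f^{qc}(\na u)\,dx$ for every open $A\subset\Om$ with negligible boundary, so the nonnegative measures $b_j\,dx$ converge weakly-$\ast$ to the absolutely continuous measure $f^{qc}(\na u)\,dx$.

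\textbf{From measure convergence to equiintegrability.} To upgrade this to weak $L^1$ convergence I would appeal to Chacon's biting lemma, extracting a subsequence $\{b_{j_k}\}$ and sets $E_k\subset\Om$ with $|E_k|\to 0$ such that $b_{j_k}1_{\Om\setminus E_k}\rightharpoonup\bar b$ weakly in $L^1$ for some $\bar b\geq 0$; the localized $\Gamma$-$\liminf$ together with the convergence of total mass forces the escaped mass $\int_{E_k}b_{j_k}\,dx$ to vanish and identifies $\bar b=f^{qc}(\na u)$ a.e., so $b_{j_k}\rightharpoonup f^{qc}(\na u)$ in $L^1(\Om)$. The Dunford-Pettis theorem gives equiintegrability of $\{b_{j_k}\}$ and, since the limit does not depend on the extraction, of the full sequence $\{b_j\}$. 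Combined with the pointwise bound of the first step, this yields the desired equiintegrability of $\{1_{B_j}\dist^2(\na v_j,SO(d))/\ep_j^2\}$. The main obstacle is precisely this last passage, since weak-$\ast$ convergence to an absolutely continuous measure does not in general exclude concentration along null sets; here it does, thanks to the combination of the recovery property (which pins down the total mass) and the localized $\Gamma$-$\liminf$ (which blocks any loss of mass on open subsets).
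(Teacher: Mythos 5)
Your pointwise reduction (Step~1) and the observation that the $\Gamma$-$\liminf$ estimate localizes to open subsets (Step~2) are both sound. The gap is in Step~3. You claim the localized $\Gamma$-$\liminf$ ``blocks any loss of mass on open subsets'' and thereby forces the biting escape $\int_{E_k}b_{j_k}\,dx\to 0$, but that inequality, $\liminf_j\int_A b_j\,dx\geq\int_A f^{qc}(\na u)\,dx$, concerns the \emph{full} sequence $\{b_j\}$, not the bitten sequence $\{b_{j_k}1_{\Om\setminus E_k}\}$; combined with the recovery equality it yields only $\lim_j\int_A b_j\,dx=\int_A f^{qc}(\na u)\,dx$ on open $A$ with null boundary, from which one gets $\bar b\leq f^{qc}(\na u)$ a.e., not the reverse. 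Weak-$\ast$ convergence of nonnegative densities to an absolutely continuous limit does \emph{not} preclude a positive biting escape: take $b_j=1+j^2\,1_{A_j}$ on $(0,1)$ with $A_j$ a union of $j^2$ equispaced intervals of total measure $1/j^2$; then $\int_A b_j\,dx\to 2|A|$ for every open $A$, yet $\int_{\{b_j>M\}}b_j\,dx\to 1$ for all $M$, because the escaped mass is itself absolutely continuous and so invisible to tests against open sets.

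To close your argument you would have to re-run the lower bound of Step~2 on the bitten sequence itself, establishing $\liminf_k\int_{\Om\setminus E_k}b_{j_k}\,dx\geq\int_\Om f^{qc}(\na u)\,dx$ by treating $E_k$ as an additional exceptional set alongside $B_{j_k}^c$ (both have measure tending to zero, and the Kristensen functions $\psi_k$ have linear growth, so their contribution over such sets vanishes). The paper's proof does essentially this, but more directly and without the biting lemma: assuming non-equiintegrability with parameter $\alpha>0$, it fixes a threshold $M$, splits the energy into the contribution from $\{|\na u_j|<M\}$ (estimated from below by $\int f^{qc}(\na u)$ up to errors vanishing as $M\to\infty$, using exactly the $\Gamma$-$\liminf$ machinery) and the contribution from $\{|\na u_j|\geq M\}$ (bounded below by $c\alpha$), and lands on the contradiction $\int_\Om f^{qc}(\na u)\,dx\geq\int_\Om f^{qc}(\na u)\,dx+c\,\alpha$.
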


\begin{proof}
For notational convenience, let $\rho_j:=1_{B_j}\ep^{-2}_{j}{\rm dist}^2(\nabla v_j, SO(d))$ 
and suppose for contradiction that $\{\rho_j\}$ is not equiintegrable, 
i.e.~there exists some $\alpha>0$ such that for all $k$ there exists $j_k$ with 
\begin{equation*}
\int\limits_{\left\{x\in\Omega :\rho_{j_k}(x)\geq k\right\}}\rho_{j_k}\,dx\geq\alpha.
\end{equation*}
Fixing any $M>0$, we have that 
$
\int_{\left\{x\in\Omega :\rho_{j_k}(x)\geq M\right\}}\rho_{j_k}\,dx\geq\alpha.
$ 
In particular, up to passing to the subsequence $\rho_{j_k}$, we may assume
\begin{equation}
\label{eq:equi1}
\liminf_{j\rightarrow\infty}
\int\limits_{\left\{x\in\Omega : \rho_{j}\geq M\right\}}\rho_{j}\,dx\geq\alpha,
\qquad\mbox{ for all $M>0$.}
\end{equation}
Since $\{u_j\}$ is a recovery sequence for $u\in W^{1,2}_h$, we have that
$\overline{\mathscr E}(u)=\int_{\Omega}f^{qc}(\nabla u)\,dx=\lim_{j}\mathscr E_{\ep_j}(u_j)$.
However, for $j$ large enough
\begin{equation}\label{eq:equi2}
\mathscr E_{\ep_j}(u_j)
\geq\int\limits_{\left\{x\in B_j : \vert\nabla u_j\vert <M\right\}}\frac{W_{\ep_j}(\nabla v_j)}{\ep^{2}_{j}}\,dx
+\quad c\!\!\!\!\!\!\!\!\!\!\!\!\int\limits_{\left\{x\in B_j : \vert\nabla u_j\vert\geq M\right\}}\frac{{\rm dist}^2(\nabla v_j, \mathcal U_{\ep_j})}{2\ep^2_j}\,dx,
\end{equation}
because $W_{\ep_j}(\na v_j)\geq c\, g_p({\rm dist}(\na v_j,\mathcal U_{\ep_j}))$ and
${\rm dist}(\na v_j,\mathcal U_{\ep_j})\leq1$ for a.e.~$x\in B_j$ and every $j$ large enough.
Next, note that for a.e.~$x$, there exists a rotation $R_j(x)$ and a matrix $U_j(x)\in\mathcal{M}$ such that
${\rm dist}(\nabla v_j(x),\mathcal{U}_{\ep_j})\geq\vert\nabla v_j(x)-R_j(x) \vert -\vert\ep_{j}U_j(x)+o(\ep_{j})|$.
But then,
\begin{equation}
\label{eq:equi3}
{\rm dist}^2(\nabla v_j(x),\mathcal{U}_{\ep_j})
\geq \frac{{\rm dist}^2(\nabla v_j(x), SO(d))}{2} - c\,\ep^{2}_{j}
\end{equation}
and combining (\ref{eq:equi2}) with (\ref{eq:equi3}) we deduce that
\begin{multline}\label{eq:equi4}
\mathscr E_{\ep_j}(u_j)
\geq\int\limits_{\left\{x\in B_j : \vert\nabla u_j\vert <M\right\}}\frac{W_{\ep_j}(\nabla v_j)}{\ep^{2}_{j}}\,dx\\
+\quad c\!\!\!\!\!\!\!\!\!\!\!\!\int\limits_{\left\{x\in\Omega : \vert\nabla u_j\vert\geq M\right\}}\frac{\rho_j}{4}\,dx
-c\,\vert\left\{x\in B_j : \vert\nabla u_j\vert\geq M\right\}\vert .
\end{multline}
Also, note that
$\vert\nabla u_j\vert =\ep_j^{-1}\vert\nabla v_j-I\vert\geq\ep_j^{-1}{\rm dist}(\nabla v_j,SO(d))$, so that
\[
\left\{x\in B_j : \rho_j(x)\geq M^2\right\}\subseteq
\left\{x\in B_j : \vert\nabla u_j\vert\geq M\right\},
\]
and that $\vert\left\{x\in B_j : \vert\nabla u_j\vert\geq M\right\}\vert\leq c/M$. 
Hence, (\ref{eq:equi4}) becomes
\begin{equation}\label{eq:equi7}
\mathscr E_{\ep_j}(u_j)\geq\int\limits_{\left\{x\in B_j : 
\vert\nabla u_j\vert <M\right\}}\frac{W_{\ep_j}(\nabla v_j)}{\ep^{2}_{j}}\,dx
+\quad c\!\!\!\!\!\!\!\!\!\!\!\!\int\limits_{\left\{x\in\Omega : \rho_j(x)\geq M^2\right\}}\frac{\rho_j}{4}\,dx - \frac cM.
\end{equation}
Let us now consider the first term on the right side of (\ref{eq:equi7}). 
Proceeding as in the proof of the $\Gamma$-$\liminf$ inequality in Theorem~\ref{thm:gammaconvergence}
(to which we refer for the notation), 
we infer that for some arbitrary $\delta$,
\begin{equation}\label{eq:equi8}
\int\limits_{\left\{x\in B_j : \vert\nabla u_j\vert <M\right\}}
\frac{W_{\ep_j}(\nabla v_j)}{\ep^{2}_{j}}\,dx
\geq\int\limits_{\left\{x\in B_j : \vert\nabla u_j\vert <M\right\}}\psi_k(\nabla u_j)\,dx
-\frac ck-c\,\delta,
\end{equation}
for every $j\geq\hat j=\hat j(\delta,k)$.
Also, we have that
\begin{align}
\int\limits_{\left\{x\in B_j : \vert\nabla u_j\vert <M\right\}}\psi_k(\nabla u_j)\,dx
&\geq\int_{B_j}\psi_k(\nabla u_j)\,dx\nonumber\\
&-\int\limits_{\left\{x\in B_j : \vert\nabla u_j\vert\geq M\right\}}
(\tilde a_k\vert\nabla u_j\vert + \tilde b_k)\,dx\nonumber\\
&\geq\int_{B_j}\psi_k(\nabla u_j)\,dx - \frac c{M}\tilde a_k - \frac c{M}\tilde b_k.
\label{eq:equi9}
\end{align}
By inequalities (\ref{eq:equi8}) and (\ref{eq:equi9}), we can rewrite (\ref{eq:equi7}) as 
\begin{multline*}\label{eq:equi11}
\mathscr E_{\ep_j}(u_j)\geq
\int_{B_j}\psi_k(\nabla u_j)\,dx-\frac c{M}\tilde a_k - \frac c{M}\tilde b_k
-\frac ck-c\,\delta\\
+\quad c\!\!\!\!\!\!\!\!\!\!\!\!\int\limits_{\left\{x\in\Omega : \rho_j(x)\geq M^2\right\}}\frac{\rho_j}4\,dx - \frac cM.
\end{multline*}
Taking the limit as $j\rightarrow\infty$ and using (\ref{eq:equi1}) gives 
\begin{equation*}
\int_{\Omega}f^{qc}(\nabla u)\,dx\geq \int_{\Omega}\psi_k(\nabla u)\,dx+c\,\alpha - 
\frac c{M}\tilde a_k - \frac c{M}\tilde b_k-\frac ck-c\,\delta-\frac cM,
\end{equation*}
and letting $M\rightarrow\infty$, we obtain
\begin{equation}
\label{eq:equi13}
\int_{\Omega}f^{qc}(\na u)\,dx\geq \int_{\Omega}\psi_k(\na u)\,dx+c\,\alpha
-\frac ck -c\,\delta.
\end{equation}
Next, taking the limit $k\rightarrow\infty$ in (\ref{eq:equi13}), by monotone convergence we get that
\begin{equation*}
\int_{\Omega}f^{qc}(\nabla u)\,dx\geq \int_{\Omega}f^{qc}(\nabla u)\,dx+
\delta\int_{\Omega}\vert\nabla u\vert ^2dx+c\,\alpha -c\,\delta,
\end{equation*}
and by the arbitrariness of $\delta$ we deduce that
\begin{equation*}
\int_{\Omega}f^{qc}(\nabla u)\,dx\geq \int_{\Omega}f^{qc}(\nabla u)\,dx+c\,\alpha,
\end{equation*}
which is absurd since $c\,\alpha>0$. This contradiction concludes the proof.
\end{proof}

The next result says that 
the Young measure representation of the limiting functional $\overline{\mathscr E}$
at a point $u\in W_h^{1,2}$ can be obtained through the Young measure 
generated by the gradients of a recovery sequence for $u$.

\begin{proposition}\label{prop:YM_rep}
Under the conditions of Theorem \ref{main_thm}, if $\{u_j\}$ is a recovery sequence 
for $u\in W^{1,2}_h$. Then we have the representation
\begin{equation*}
\overline{\mathscr E}(u)=\int_{\Om}\int_{\Rd}f(F)d\nu_x(F)dx,
\end{equation*}
where $(\nu_x)_{x\in \Om}$ is the $p$-gradient Young measure generated by the sequence $\{\na u_j\}$.
\end{proposition}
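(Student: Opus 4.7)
The plan is to sandwich $\overline{\mathscr E}(u)=\lim_j\mathscr E_{\ep_j}(u_j)$ between matching inequalities involving $\int_\Om\int_\Rd f(F)\,d\nu_x(F)\,dx$: a lower bound obtained by a Fatou-type argument with varying integrands, and an upper bound delivered by a Jensen-type inequality for gradient Young measures applied to $f^{qc}$.

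For the lower bound I would begin as in the $\Gamma$-$\liminf$ proof of Theorem~\ref{thm:gammaconvergence}. On $B_j$ one has $|\ep_j\na u_j|\le\sqrt{\ep_j}$ so polar decomposition applies, and frame indifference together with the identity \eqref{eq:zeta} gives
\[
\frac{1}{\ep_j^2}W_{\ep_j}(I+\ep_j\na u_j)=f_{\ep_j}(G_j),\qquad G_j:=\na u_j+\frac{\zeta(\ep_j\na u_j)}{\ep_j}.
\]
Since $f_{\ep_j}(0)=0$, this yields $\mathscr E_{\ep_j}(u_j)\ge\int_\Om f_{\ep_j}(1_{B_j}G_j)\,dx$. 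Next, property \eqref{eq:prop_h} combined with Lemma~\ref{lemma:bj}(ii) gives $\int_{B_j}|\zeta(\ep_j\na u_j)/\ep_j|^2\,dx\le c\,\ep_j\int_{B_j}|\na u_j|^2\,dx\to 0$, so together with $|B_j^c|\to 0$ one has $1_{B_j}G_j-\na u_j\to 0$ in measure, and hence $\{1_{B_j}G_j\}$ generates the same Young measure $(\nu_x)$ as $\{\na u_j\}$. To pass to the limit despite the $j$-dependent integrand I would truncate: for each $M>0$ the functions $f_{\ep_j}\wedge M$ are continuous, uniformly bounded by $M$, and converge locally uniformly to $f\wedge M$, so the fundamental theorem on Young measures (with such integrands) yields $\int_\Om(f_{\ep_j}\wedge M)(1_{B_j}G_j)\,dx\to\int_\Om\int_\Rd(f\wedge M)\,d\nu_x\,dx$. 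Using $f_{\ep_j}\ge f_{\ep_j}\wedge M$, taking $\liminf_j$, and then letting $M\to\infty$ by monotone convergence yields $\liminf_j\int_\Om f_{\ep_j}(1_{B_j}G_j)\,dx\ge\int_\Om\int_\Rd f\,d\nu_x\,dx$, and the recovery identity then gives $\overline{\mathscr E}(u)\ge\int_\Om\int_\Rd f\,d\nu_x\,dx$.

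For the upper bound, Lemma~\ref{lemma:bj}(ii) guarantees $\bar\nu_x=\na u(x)$ a.e., and Fatou applied to $|\cdot|^2$ along the $L^2$-bounded sequence $\{1_{B_j}\na u_j\}$ (which generates the same $(\nu_x)$) yields $\int_\Om\int_\Rd|F|^2\,d\nu_x\,dx<\infty$, so $\int f^{qc}\,d\nu_x<\infty$ a.e. I would then establish the Jensen-type inequality $f^{qc}(\bar\nu_x)\le\int f^{qc}\,d\nu_x$ by applying Kristensen's Lemma~\ref{lemma:kristensen} not to $f^{qc}$ (which lacks the required coercivity from below) but to the perturbed integrand $g_\delta:=f^{qc}+\delta|\cdot|^2$, obtaining an increasing sequence $\psi_k^\delta\nearrow g_\delta$ of quasiconvex functions of linear growth. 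Since linear growth is within $p$-growth for any $p\ge 1$, the standard Jensen inequality for $p$-gradient Young measures applies to each $\psi_k^\delta$; passing $k\to\infty$ by monotone convergence, and then $\delta\to 0$ using the finite second moment to kill the $\delta\int|F|^2\,d\nu_x$ remainder, yields $f^{qc}(\na u(x))\le\int f^{qc}\,d\nu_x\le\int f\,d\nu_x$. Integrating over $\Om$ gives $\overline{\mathscr E}(u)=\int_\Om f^{qc}(\na u)\,dx\le\int_\Om\int_\Rd f\,d\nu_x\,dx$, which combined with the lower bound concludes the proof.

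The main obstacle is the Jensen step: because $f^{qc}$ has quadratic growth while $(\nu_x)$ is only guaranteed to be a $p$-gradient Young measure with $p<2$, neither the Kinderlehrer--Pedregal characterization nor a direct application of Kristensen's lemma to $f^{qc}$ is available. The $\delta$-perturbation detour is essential, and the control of the error $\delta\int|F|^2\,d\nu_x\to 0$ requires the finite second moment of $\nu_x$—which rests on the $L^2$-compactness of Lemma~\ref{lemma:bj}(ii), neatly reinforced by the equiintegrability of Lemma~\ref{lemma:equiint}. A secondary technical point is the truncated Fatou argument in the lower bound, which must accommodate the varying integrands $f_{\ep_j}$, but this is handled routinely using the local uniform convergence $f_{\ep_j}\to f$ together with the uniform $L^\infty$-bound on each truncation.
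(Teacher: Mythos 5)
Your proof is correct in outline but takes a genuinely different route from the paper's. The paper proves Proposition~\ref{prop:YM_rep} via Lemma~\ref{lemma:truncation}: one first shows that $(\nu_x)_{x\in\Om}$ is a $2$-gradient Young measure (using Kristensen's Corollary~1.8 together with the $L^2$-bound from Lemma~\ref{lemma:bj}(ii)), then constructs a diagonal truncation $Y_k:=T_k(\na u_{j_k})$ generating the same Young measure, with $\{|Y_k|^2\}$ equiintegrable and $\int_\Om f(Y_k)\,dx\to\int_\Om f^{qc}(\na u)\,dx$, and finally sandwiches: $\overline{\mathscr E}(u)=\lim_k\int_\Om f(Y_k)\,dx\geq\int_\Om\langle\nu_x,f\rangle\,dx\geq\overline{\mathscr E}(u)$, where the last step is Jensen for the now-recognised $2$-gradient Young measure. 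You instead establish the two inequalities directly: the lower bound by a Fatou argument on the truncated varying integrands $f_{\ep_j}\wedge M$ along $\{1_{B_j}G_j\}$, and the upper bound by proving the pointwise Jensen inequality $f^{qc}(\na u(x))\leq\langle\nu_x,f^{qc}\rangle$ through Kristensen's Lemma~\ref{lemma:kristensen} applied to the perturbation $f^{qc}+\delta|\cdot|^2$, letting $k\to\infty$ and then $\delta\to0$ using only the finite second moment $\langle\nu_x,|\cdot|^2\rangle<\infty$. Your route avoids the $p$-to-$2$ gradient-Young-measure upgrade altogether (which is not stated in the paper and requires a nontrivial result from \cite{Kristensen}), at the price of a little extra care in the lower-bound limit: because $1_{B_j}G_j$ is not uniformly bounded, passing $\int_\Om(f_{\ep_j}\wedge M)(1_{B_j}G_j)\,dx\to\int_\Om\langle\nu_x,f\wedge M\rangle\,dx$ needs the usual splitting over $\{|1_{B_j}G_j|\leq R\}$ and its complement before invoking locally uniform convergence; you flag this, and it is indeed routine.

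One concrete error: the assertion $f_{\ep_j}(0)=0$ is false in general. Indeed $f_{\ep_j}(0)=\ep_j^{-2}W_{\ep_j}(I)$, which vanishes only if $I\in\mathcal U_{\ep_j}$; with wells of the form \eqref{eq:intro_wells} this requires $0\in\mathcal M$ (and the $o(\ep)$ correction to drop out), and it fails for the nematic model, where $f(0)=\mu\min_{n\in\Sph}|U_n|^2=\tfrac{3\mu}{2}>0$. The inequality you want should read
\[
\mathscr E_{\ep_j}(u_j)\geq\int_\Om f_{\ep_j}(1_{B_j}G_j)\,dx-|B_j^c|\,f_{\ep_j}(0),
\]
and since $|B_j^c|\to0$ while $f_{\ep_j}(0)\to f(0)<\infty$, the error term vanishes in the $\liminf$ and your conclusion survives. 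But the step as written is wrong and should be corrected.
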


Before proving this result, some comments are in order.
When the growth condition (\ref{eq:coercivity}) with $p=2$ is assumed, 
proving the Young measure representation can be reduced to showing 
that a recovery sequence $\{u_j\}$ is indeed a relaxing sequence for the 
limiting functional. In turn, this amounts to
the equiintegrability of $\{|\na u_j|^2\}$, which can 
be deduced from the rigidity results of \cite{CoDoMu}
and the equiintegrability of 
$\{{\rm dist}^2(\na v_j,SO(d))/\ep_j^2\}$ (see \cite[Lemma 4.2]{Schmidt}).
For $1<p<2$, one cannot hope to prove the latter; instead, the previous lemma says that 
the equiintegrability of 
$\{1_{B_j}{\rm dist}^2(\na v_j,SO(d))/\ep_j^2\}$ holds true. Nevertheless, this cannot be used
to prove the equiintegrability of the sequence $\{|1_{B_j}\na u_j|^2\}$ 
(which could play the role of $\{|\nabla u_j|^2\}$)
via a straightforward application of \cite{CoDoMu}. 
The reason is essentially that $1_{B_j}\na u_j$ is not 
itself a gradient and we cannot apply the rigidity results, 
at least not in any obvious way.
Note, however, that recovery sequences are indeed $p$-equiintegrable
as the following remark points out. 

\begin{remark}\label{remark:p-equiintegrable}
As done in \cite{AgDalDe}, it is possible to prove that the sequence
\begin{equation*}
\left\{\frac{{\rm dist}^p(\nabla v_j,SO(d))}{\ep^{p}_{j}}\right\}
\end{equation*}
is equiintegrable. An application of \cite{CoDoMu} then gives the 
equiintegrability of $\left\{\vert\nabla u_j\vert ^p\right\}$.
However, this is not enough for the Young measure representation due to the quadratic growth of $f$.
\end{remark}

In view of the previous discussion, a different strategy needs to be sought in order
to prove the Young measure representation.
The idea is to show that, given a recovery sequence $\{u_j\}$, 
one can construct a ``relaxing'' sequence for the limiting functional
via a suitable truncation of $\nabla u_j$. This is the content of the following lemma,
where the truncation operator $T_k:\Rd\to\Rd$ is defined as
\[
T_k(F):=\left\{\begin{array}{ll}
k\frac{F}{|F|}, & |F|>k\\
 & \\
F, & |F|\leq k.
\end{array}\right.
\]

\begin{lemma}\label{lemma:truncation}
Under the conditions of Theorem \ref{main_thm}, if $\{u_j\}$ is a recovery sequence 
for $u\in W^{1,2}_h$, there exists a subsequence $\{u_{j_k}\}\subset\{u_j\}$ such that
the truncations $Y_k:\Om\to\Rd$ defined as
\[
Y_k:=T_k(\nabla u_{j_k})
\]
satisfy
\begin{equation}\label{oioia}
\int_\Om f(Y_k)dx\longrightarrow\int_\Om f^{qc}(\nabla u)dx.
\end{equation}
\end{lemma}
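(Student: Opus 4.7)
Since $\{u_j\}$ is a recovery sequence for $u\in W^{1,2}_h$, we have $\mathscr E_{\ep_j}(u_j)\to\int_\Om f^{qc}(\na u)\,dx$. The goal is therefore to extract $\{j_k\}$ such that $\int_\Om f(Y_k)\,dx-\mathscr E_{\ep_{j_k}}(u_{j_k})\to 0$, which directly yields \eqref{oioia}. The proof proceeds by a diagonal argument based on the decomposition $\Om=(B_{j_k}\cap A_k)\cup(B_{j_k}\cap A_k^c)\cup B_{j_k}^c$, where $A_k:=\{|\na u_{j_k}|\leq k\}$, so that $Y_k=\na u_{j_k}$ on $A_k$ and $|Y_k|=k$ elsewhere.

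On $B_{j_k}\cap A_k$, the gradient is bounded by $k$, so using frame-indifference of $W_{\ep_{j_k}}$ together with the expansion $\sqrt{(I+\ep F)^T(I+\ep F)} = I+\ep\,{\rm sym}\,F+\zeta(\ep F)$, the bound $|\zeta(\ep F)|\leq c\,\ep^2|F|^2$ from \eqref{eq:prop_h}, and the uniform convergence $f_{\ep_{j_k}}\to f$ on compact sets, one obtains
\[
f(\na u_{j_k})=\frac{1}{\ep_{j_k}^2}W_{\ep_{j_k}}(I+\ep_{j_k}\na u_{j_k})+o(1)
\]
uniformly on this set. Thus this contribution is at most $\mathscr E_{\ep_{j_k}}(u_{j_k})+o(1)$. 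Extracting $\{j_k\}$ so that $|B_{j_k}^c|\leq 1/k^3$ (possible by Lemma~\ref{lemma:bj}(i)) and using $f(Y_k)\leq c(1+k^2)$ handles the contribution over $B_{j_k}^c$ with an $O(1/k)$ bound.

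The critical piece is $B_{j_k}\cap A_k^c$, where $|Y_k|=k$ gives $\int_{B_{j_k}\cap A_k^c}f(Y_k)\,dx\leq c(1+k^2)|B_{j_k}\cap A_k^c|$. Since $k^2|A_k^c|\leq\int_{A_k^c}|\na u_{j_k}|^2\,dx$ by Chebyshev, the problem reduces to establishing $L^2$-equiintegrability of $\{1_{B_j}|\na u_j|^2\}$. I would derive this from the equiintegrability of $\{1_{B_j}\dist^2(\na v_j,SO(d))/\ep_j^2\}$ (Lemma~\ref{lemma:equiint}) by applying the Friesecke-James-M\"uller rigidity on a sufficiently fine partition of $\Om$: on each piece one finds a nearest rotation $R$ with $|\na v_j-R|^2$ controlled integrally by $\dist^2(\na v_j,SO(d))$, and these local rotations are close to $I$ (by an analogue of \cite[Lemma~3.3]{AgDalDe}), which lets one transfer the $\dist^2/\ep_j^2$ control into $|\na u_j|^2=|\na v_j-I|^2/\ep_j^2$ control modulo integrable terms. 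Combining the three pieces yields $\limsup_k\int_\Om f(Y_k)\,dx\leq\int_\Om f^{qc}(\na u)\,dx$.

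For the matching lower bound, I would combine $f\geq f^{qc}$ with a Lipschitz truncation $w_k$ of $u_{j_k}$ at level $\sim k$ (Acerbi-Fusco type), giving $w_k\in W^{1,\infty}_h$ with $|\na w_k|\leq ck$, $w_k\rightharpoonup u$ in $W^{1,2}$, and $\na w_k=\na u_{j_k}$ off a set of measure $O(1/k^p)$, which coincides with $Y_k$ on the large set $A_k$. The $W^{1,2}$-quasiconvexity of $f^{qc}$ (with quadratic growth, by \cite{BallMurat}) provides weak lower semicontinuity of $\int f^{qc}(\na\cdot)\,dx$, which applied to $w_k$ and compared with $\int f^{qc}(Y_k)\,dx$ (using that the two integrands agree on the large overlap set and that the quadratic-growth remainders vanish by the equiintegrability step above) gives $\liminf_k\int_\Om f^{qc}(Y_k)\,dx\geq\int_\Om f^{qc}(\na u)\,dx$. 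The principal obstacle is the $L^2$-equiintegrability transfer; the remaining bookkeeping of error terms in the diagonal extraction is routine.
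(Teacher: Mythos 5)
Your opening strategy---showing directly that $\int_\Om f(Y_k)\,dx-\mathscr E_{\ep_{j_k}}(u_{j_k})\to 0$ via the three-set decomposition $\Om=(B_{j_k}\cap A_k)\cup(B_{j_k}\cap A_k^c)\cup B_{j_k}^c$---is reasonable in spirit, and the treatment of the pieces $B_{j_k}\cap A_k$ and $B_{j_k}^c$ is fine. However, the argument collapses at exactly the point you flag as the ``principal obstacle,'' and the proposed remedy does not work. You want to establish $L^2$-equiintegrability of $\{1_{B_j}|\na u_j|^2\}$ by running Friesecke--James--M\"uller rigidity on a fine partition, feeding in the equiintegrability of $\{1_{B_j}\dist^2(\na v_j,SO(d))/\ep_j^2\}$ from Lemma~\ref{lemma:equiint}. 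But FJM rigidity on a cube $Q$ gives $\int_Q|\na v_j-R_Q|^2\leq C\int_Q\dist^2(\na v_j,SO(d))$ with the \emph{full} integral on the right, not the $B_j$-restricted one. On $B_j^c$ the coercivity \eqref{eq:coercivity} only furnishes a $\dist^p$-type control (that is what allows the $g_p$-rigidity estimate \eqref{eq:compactness5}, yielding merely $p$-equiintegrability as in Remark~\ref{remark:p-equiintegrable}); there is no quadratic bound for $\dist^2(\na v_j,SO(d))$ there, so $\int_Q\dist^2(\na v_j,SO(d))/\ep_j^2$ is genuinely uncontrolled. You cannot instead apply rigidity to the truncated field because $1_{B_j}\na v_j$ is not a gradient. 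This is precisely the obstruction the paper spells out in the paragraph between Proposition~\ref{prop:YM_rep} and Lemma~\ref{lemma:truncation}: ``this cannot be used to prove the equiintegrability of the sequence $\{|1_{B_j}\na u_j|^2\}$ \dots\ via a straightforward application of [Conti--Dolzmann--M\"uller]. The reason is essentially that $1_{B_j}\na u_j$ is not itself a gradient.'' Your plan is exactly the ``obvious'' application they rule out.

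The paper's proof sidesteps the issue entirely, and this is where the real idea lies. One never proves equiintegrability of $\{1_{B_j}|\na u_j|^2\}$. Instead: (i) the $p$-gradient Young measure $(\nu_x)$ generated by $\{\na u_j\}$ is shown to be a $2$-gradient Young measure, using only the uniform $L^2$-\emph{bound} on $\{1_{B_j}\na u_j\}$ from Lemma~\ref{lemma:bj}(ii) together with \cite[Corollary 1.8]{Kristensen}; (ii) for each fixed $k$, $\{|T_k(\na u_j)|^2\}_j$ is trivially equiintegrable (bounded by $k^2$), so $\lim_j\int|T_k(\na u_j)|^2=\int\langle\nu_x,|T_k(\cdot)|^2\rangle$, and letting $k\to\infty$ by monotone convergence gives $\int\langle\nu_x,|\cdot|^2\rangle$; (iii) a diagonal choice of $j_k$ then makes $\int|Y_k|^2\to\int\langle\nu_x,|\cdot|^2\rangle$, and since $\{Y_k\}$ generates the same $2$-gradient Young measure $(\nu_x)$, convergence of the second moments upgrades to $2$-equiintegrability \emph{of $\{Y_k\}$ only}, which is all that is needed. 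The lower bound is then a one-line Jensen argument via Young measures, rather than the Lipschitz-truncation comparison you sketch (which again leans on the missing equiintegrability to control remainders). In short: the diagonal subsequence is not mere bookkeeping but the load-bearing device that replaces the unavailable $L^2$-equiintegrability of the original sequence, and without that device your proof has a genuine gap.
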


\begin{proof}
Let $\{u_j\}$ be a recovery sequence for $u$, so that
$u_j\rightharpoonup u$ weakly in $W^{1,p}(\Om,\RR^d)$ and $\mathscr E_{\ep_j}(u_j)\to\overline{\mathscr E}(u)$,
and let $(\nu_x)_{x\in\Om}$ be the $p$-gradient Young measure
generated by $\{\na u_j\}$. We split the proof into several steps.

\emph{Step 1.} We prove that $(\nu_x)_{x\in\Om}$ is
a $2$-gradient Young measure. Indeed, since
\begin{equation*}
\left\{x\in\Omega : \nabla u_j(x)\neq 1_{B_j}\nabla u_j(x)\right\}=B^{c}_{j}
\end{equation*} 
and $\vert B^{c}_{j}\vert\rightarrow 0$, the sequences $\left\{\nabla u_j\right\}$ and 
$\left\{1_{B_j}\nabla u_j\right\}$ generate the same Young measure $(\nu_x)_{x\in\Om}$
(see e.g. \cite[Lemma 6.3]{Pedregal_book}). 
In particular, by \cite[Theorem 6.11]{Pedregal_book}, we have that
\begin{equation*}
\int_{\Om}\langle\nu_x,|\cdot|^2\rangle\,dx:=\int_{\Om}\int_{\Rd}|A|^2d\nu_x(A)\,dx
\leq\liminf_{j\to\infty}\int_{\Om}|1_{B_j}\na u_j|^2\,dx\leq c.
\end{equation*}
This bound, together with the fact that $(\nu_x)_{x\in\Om}$ is a $p$-gradient Young measure
concludes the proof of Step 1, 
in view of \cite[Corollary 1.8]{Kristensen}.

\emph{Step 2.} For every subsequence $\{u_{j_k}\}\subset\{u_j\}$, the sequence 
$\{Y_k\}$ defined as in the statement generates the same Young measure $(\nu_x)_{x\in\Om}$.
As in Step 1, this follows directly from
\[
|\{x\in\Om:Y_k(x)\neq\nabla u_{j_k}(x)\}| = |\{x\in\Om:|\nabla u_{j_k}(x)|>k\}|\leq \frac 1k \int_\Om |\nabla u_{j_k}|dx,
\]
and the boundedness of $\{\nabla u_j\}$ in $L^p(\Om,\Rd)$.

\emph{Step 3.} Here we show that there exists a subsequence $\{u_{j_k}\}\subset\{u_j\}$ such that
$\{|Y_k|^2\}$ is equiintegrable. Note that
\[
\lim_{k\to\infty} \lim_{j\to\infty}\int_\Om |T_k(\nabla u_j)|^2\,dx
= \lim_{k\to\infty}\int_\Om \langle\nu_x,|T_k(\cdot)|^2\rangle\,dx
= \int_\Om \langle\nu_x, |\cdot |^2\rangle\,dx,
\]
where the first equality follows from the fact that, for every $k$, $\{|T_k(\nabla u_j)|^2\}_j$ 
is equiintegrable and the second equality from monotone convergence. 
Hence, for a diagonal subsequence $\{j_k\}$, the convergence 
\[
\int_\Om |Y_k|^2dx=\int_\Om |T_k(\nabla u_{j_k})|^2dx\longrightarrow \int_\Om \langle\nu_x, |\cdot |^2\rangle dx
\]
holds as $k\to\infty$. Standard arguments then imply that $\{|Y_k|^2\}$ is equiintegrable, 
in view of the fact that $(\nu_x)_{x\in\Om}$ is $2$-gradient Young measure.

\emph{Step 4.} We now conclude the proof of the lemma. On the one hand, 
by \cite[Theorem 6.11]{Pedregal_book} and the standard characterization
of gradient Young measures,
\begin{equation}\label{eq:liminfYM1}
\liminf_{k\to\infty}\int_{\Om}f(Y_k)\,dx\geq\int_{\Om}\langle\nu_x,f\rangle\,dx\geq\int_{\Om}\langle\nu_x,f^{qc}
\rangle\,dx\geq\int_{\Om}f^{qc}(\na u)\,dx.
\end{equation}
We are left to show that
\[
\limsup_{k\to\infty}\int_{\Om}f(Y_k)\,dx\leq\int_{\Om}f^{qc}(\na u)\,dx.
\]
Note that, fixing $M>0$,
\begin{align*}
\int_{\Om}f^{qc}(\na u)\,dx& 
\geq\limsup_{j\to\infty} \int_{\{|\na u_j|\leq M\}} f_{\ep_j}\left(\nabla u_j+\frac{\zeta(\ep_j\na u_j)}{\ep_j}\right)\,dx\\
&=\limsup_{j\to\infty}\int_{\{|\na u_j|\leq M\}} f(\na u_j)\,dx,
\end{align*}
where $f_{\ep_j}$ and $\zeta$ are defined in \eqref{lin_lim_1} 
and \eqref{eq:zeta}, respectively, and the equality is 
due to the fact that $f_{\ep_j}\to f$ uniformly on compact 
subsets and $\zeta(\ep_j\na u_j)/\ep_j\to 0$ uniformly on $\{|\na u_j|\leq M\}$.
In particular, the previous inequality holds for the subsequence $\{u_{j_k}\}$ introduced in Step 3, leading to
\begin{equation*}
\int_{\Om}f^{qc}(\na u)\,dx\geq \limsup_{k\to\infty}\int_{\{|Y_k|\leq M\}} f(Y_k)\,dx,
\end{equation*}
where we have also used the fact that for $k$ large, $Y_k = \nabla u_{j_k}$ on 
$\{|\nabla u_{j_k}|\leq M\}$. Finally, observe that
\[
\limsup_{k\to\infty}\int_{\{|Y_k|\leq M\}} f(Y_k)\,dx\geq \limsup_{k\to\infty}\int_\Om f(Y_k)dx - 
\limsup_{k\to\infty}\int_{\{|Y_k| > M\}}f(Y_k)dx
\]
and that the second summand on the right-hand side vanishes 
as $M\to\infty$ due to Step 3 and the $2$-growth of $f$.
This concludes the proof.
\end{proof}

\begin{proof}[Proof of Proposition~\ref{prop:YM_rep}]
Let $\{u_j\}$ be a recovery sequence for $u\in W^{1,2}_h$ and $(\nu_x)_{x\in\Om}$ be the gradient Young measure associated with $\{\nabla u_j\}$. Considering the subsequence $\{u_{j_k}\}$ given by Lemma \ref{lemma:truncation}, we have that
\[
\overline{\mathscr E}(u) := \int_\Om f^{qc}(\nabla u)dx = \lim_{k\to\infty}\int_{\Om}f(Y_k)\,dx
\geq\int_{\Om}\langle\nu_x,f\rangle\,dx\geq\overline{\mathscr E}(u),
\]
where the equality follows from \eqref{oioia} and the inequalities follow from \eqref{eq:liminfYM1}.
\end{proof}

\begin{remark}
Note that Proposition \ref{prop:YM_rep} can equivalently be expressed in terms of the function $V:\Rds\to\RR$,
recalling that $V$ is obtained as the limit of $W_{\ep}(I+\ep\cdot)/\ep^2$ and $f(F):=V({\rm sym}\,F)$. 
To do so, one needs to consider the measure $\nu^{e}_{x}$ defined for a.e. $x\in\Om$ as the pushforward of
$\nu_x$ under the transformation $g: F\mapsto{\rm sym}\,F$, i.e.
\begin{equation*}
\nu^{e}_{x}(\mathcal B):=\nu_x(g^{-1}(\mathcal B)),
\end{equation*}
for all Borel subsets $\mathcal B$ of $\Rds$. 
Then, by a simple change of variables (see e.g. \cite[Section 39]{Halmos_book}), we infer that
\begin{align*}
\mathscr E(u)
&=\int_{\Omega}\int_{\mathbb{R}^{d\times d}}V(g(F))\,d\nu_x(F)\,dx\\
&=\int_{\Omega}\int_{\Rds}V(E)\,d\nu_x(g^{-1}(E))\,dx=\int_{\Omega}\int_{\Rds}V(E)\,d\nu^{e}_x(E)\,dx.
\end{align*}
\end{remark}

Turning attention to the second improvement regarding 
the strong convergence of recovery sequences, let us first recall that a function
$f:\Rd\to\RR$ is \emph{uniformly strictly quasiconvex} if
there exists a constant $\delta>0$ such that for every $F\in\Rd$
\begin{equation}\label{def:strict}
\int_{\Om}[f(F+\na\varphi)-f(F)]dx\geq\delta\int_{\Om}|\na\varphi|^2dx,
\quad\mbox{for every }\varphi\in W_0^{1,\infty}(\Om,\RR^d).
\end{equation}
We have the following proposition.

\begin{proposition}\label{prop:strong_cvg}
Under the conditions of Theorem~\ref{main_thm}, 
suppose that the limiting density $f$ is uniformly strictly quasiconvex.
If $\{u_j\}$ is a recovery sequence for $u\in W^{1,2}_h$, then $u_j\to u$ 
strongly in $W^{1,p}(\Om,\RR^d)$.
\end{proposition}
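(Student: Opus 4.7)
The plan is to identify the gradient Young measure $(\nu_x)_{x\in\Om}$ generated by $\{\na u_j\}$ and to show, using uniform strict quasiconvexity, that $\nu_x = \delta_{\na u(x)}$ for a.e.\ $x\in\Om$; strong $L^p$-convergence of $\na u_j$ to $\na u$ will then follow from standard Young measure arguments, and strong convergence in $W^{1,p}$ by Poincar\'e's inequality. First, I would note that uniform strict quasiconvexity of $f$ implies quasiconvexity, so $f = f^{qc}$ and $\overline{\mathscr E}(u) = \int_\Om f(\na u)\,dx$. Then I would apply Lemma~\ref{lemma:truncation} to extract a subsequence $\{u_{j_k}\}$ and form the truncations $Y_k := T_k(\na u_{j_k})$, which satisfy: (a) $\{|Y_k|^2\}$ is equiintegrable (Step~3 of the proof of Lemma~\ref{lemma:truncation}); (b) $\int_\Om f(Y_k)\,dx\to\int_\Om f(\na u)\,dx$; and (c) $\{Y_k\}$ generates the same Young measure $(\nu_x)_{x\in\Om}$ as $\{\na u_j\}$, which by Step~1 of the same lemma is a $W^{1,2}$-gradient Young measure with barycenter $\na u(x)$. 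By the $L^2$-equiintegrability of $\{|Y_k|^2\}$ and the quadratic growth of $f$, the Young measure representation then yields
\begin{equation*}
\int_\Om \langle\nu_x,f\rangle\,dx \;=\; \lim_{k\to\infty}\int_\Om f(Y_k)\,dx \;=\; \int_\Om f(\na u)\,dx.
\end{equation*}

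The core of the argument is to establish the pointwise bound
\begin{equation*}
\langle \nu_x, f\rangle \;\geq\; f(\na u(x)) + \delta\,\langle \nu_x, |\cdot - \na u(x)|^2\rangle \quad\mbox{for a.e.\ }x\in\Om.
\end{equation*}
To prove this, I would invoke the Kinderlehrer--Pedregal characterization of homogeneous $W^{1,2}$-gradient Young measures: for a.e.\ $x\in\Om$, the measure $\nu_x$, with barycenter $F := \na u(x)$, is generated on a fixed ball $B$ by a sequence $F + \na\psi_n$ with $\psi_n\in W^{1,2}_0(B,\RR^d)$ and $\{|\na\psi_n|^2\}$ equiintegrable. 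Thanks to the quadratic growth of $f$, condition \eqref{def:strict} extends by density from $W^{1,\infty}_0$ to $W^{1,2}_0$ test functions; applying this extension to $\psi_n$ and letting $n\to\infty$ (with both sides controlled by the equiintegrability of $|F+\na\psi_n|^2$ and $|\na\psi_n|^2$) produces the required inequality at the level of $\nu_x$. Integrating this pointwise bound over $\Om$ and comparing with the identity above forces $\int_\Om \langle\nu_x, |\cdot-\na u(x)|^2\rangle\,dx = 0$, so that $\nu_x = \delta_{\na u(x)}$ for a.e.\ $x\in\Om$.

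Having identified $(\nu_x)$ as a Dirac, I would obtain $Y_k\to\na u$ strongly in $L^2(\Om,\Rd)$ from convergence in measure together with equiintegrability of $\{|Y_k|^2\}$ (Vitali); in particular, $Y_k\to\na u$ in $L^p$. To transfer this to $\na u_{j_k}$ itself, I would use the $p$-equiintegrability of $\{|\na u_j|^p\}$ from Remark~\ref{remark:p-equiintegrable}, which together with $|\{Y_k\neq\na u_{j_k}\}|\to 0$ gives $\|\na u_{j_k}-Y_k\|_p\to 0$; hence $\na u_{j_k}\to\na u$ in $L^p$, and Poincar\'e's inequality (applied to $u_{j_k}-u\in W^{1,p}_0$ with respect to $\pa_D\Om$) upgrades this to $u_{j_k}\to u$ in $W^{1,p}(\Om,\RR^d)$. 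A subsequence-of-subsequence argument, using that any subsequence of the original recovery sequence is itself a recovery sequence and therefore admits a further subsequence converging strongly to the same $u$, finally yields strong convergence of the full sequence $\{u_j\}$. The principal obstacle in the scheme is the Jensen-type inequality at the Young measure level, which requires both the Kinderlehrer--Pedregal characterization of $W^{1,2}$-gradient Young measures and a density extension of \eqref{def:strict} to $W^{1,2}_0$ test functions.
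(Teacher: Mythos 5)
Your argument is correct and reaches the paper's central conclusion, $\nu_x=\delta_{\na u(x)}$ for a.e.\ $x\in\Om$, but it takes a somewhat different and more laborious route to the key Jensen-type inequality. You establish $\langle\nu_x,f\rangle\geq f(\na u(x))+\gamma\,\langle\nu_x,|\cdot-\na u(x)|^2\rangle$ by localizing $\nu_x$, exhibiting a homogeneous generating sequence $F+\na\psi_n$ with $\psi_n\in W^{1,2}_0(B,\RR^d)$ and $\{|\na\psi_n|^2\}$ equiintegrable, extending \eqref{def:strict} by density from $W^{1,\infty}_0$ to $W^{1,2}_0$, and passing to the limit. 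The paper obtains the same inequality more cheaply by observing that \eqref{def:strict} says precisely that $g:=f-\gamma|\cdot|^2$ is quasiconvex, and that $g$ still satisfies a two-sided quadratic bound; the standard Jensen inequality for $W^{1,2}$-gradient Young measures applied directly to $g$ then yields $\langle\nu_x,f\rangle-f(\na u(x))\geq\gamma\bigl(\langle\nu_x,|\cdot|^2\rangle-|\na u(x)|^2\bigr)$. Combined with the a.e.\ equality $\langle\nu_x,f\rangle=f(\na u(x))$, which both proofs deduce from Proposition~\ref{prop:YM_rep}, this forces $\langle\nu_x,|\cdot|^2\rangle\leq|\na u(x)|^2$, and the convexity of $|\cdot|^2$ upgrades this to equality and hence to the Dirac. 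The paper's shortcut avoids both the localization/generation form of the Kinderlehrer--Pedregal theorem and the density extension of \eqref{def:strict}; your route makes the role of uniform strict quasiconvexity more transparent, at the cost of more machinery. For the final step, the paper also argues a bit more directly: it derives $\na u_j\to\na u$ pointwise a.e.\ (up to a subsequence) by combining $1_{B_j}\na u_j\to\na u$ in measure with $1_{B_j^c}\na u_j\to0$ from Lemma~\ref{lemma:bj}, and then invokes Vitali with the $p$-equiintegrability of Remark~\ref{remark:p-equiintegrable}; your detour through strong $L^2$-convergence of the truncations $Y_k$ and the subsequent transfer to $\na u_{j_k}$ reaches the same conclusion.
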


\begin{proof}
Since $f$ is uniformly strictly quasiconvex, Proposition \ref{prop:YM_rep} says that
\begin{equation}\label{eqn:to_strict}
\int_{\Omega}f(\nabla u)\,dx =\int_{\Om}\langle\nu_x,f\rangle\,dx.
\end{equation}
Also, we have that $\langle\nu_x,f\rangle\geq f(\na u(x))$ for a.e.~$x\in\Om$, by the 
characterization of gradient Young measures (see \cite[Theorem 8.14]{Pedregal_book}).
Combining this with (\ref{eqn:to_strict}), we infer that
\begin{equation}\label{eqn:to_strict_1}
\langle\nu_x,f\rangle=f(\na u(x)),\qquad\mbox{for a.e. }x\in\Om.
\end{equation}
Next, we claim that
\begin{equation}\label{eqn:to_strict_2}
\langle\nu_x,|\cdot|^2\rangle=|\na u(x)|^2,\qquad\mbox{for a.e. }x\in\Om.
\end{equation}
To see this, let us first note that, since $f$ satisfies (\ref{def:strict}) for some $\gamma>0$, then
the function $g(F):=f(F)-\tilde\gamma|F|^2$ is quasiconvex for every $0<\tilde\gamma\leq\gamma$.
This fact, together with the growth bounds $-\tilde\gamma|F|^2\leq g(F)\leq C(1+|F|^2)$
(recall that $f$ is nonnegative and that $f(F)\leq C(1+|F|^2)$ by assumption),
allows us to deduce that
$\langle\nu_x,g\rangle\geq g(\na u(x))$ for a.e.~$x\in\Om$, 
which is equivalent to
\[
\langle\nu_x,f\rangle-f(\na u(x))\geq\gamma(\langle\nu_x,|\cdot|^2\rangle-|\na u(x)|^2),
\quad\mbox{ for a.e. }x\in\Om.
\]
But the left side of this inequality is zero in view of (\ref{eqn:to_strict_1}),
so that $\langle\nu_x,|\cdot|^2\rangle\leq |\na u(x)|^2$ for a.e.~$x\in\Om$.
The convexity of the map $F\mapsto |F|^2$ then gives (\ref{eqn:to_strict_2}).
In particular, this implies that
\begin{equation}\label{eqn:to_strict_3}
\nu_x=\delta_{\na u(x)},\qquad\mbox{for a.e. }x\in\Om.
\end{equation}
This is because, since $(\nu_x)_{x\in\Om}$ is a 2-gradient Young measure, 
there exists a sequence $\{w_j\}$ such that $\na w_j \rightharpoonup \na u$ in $L^2(\Om,\Rd)$, 
$\{|\na w_j|^2\}$ is equiintegrable and $\{\na w_j\}$ generates the measure $(\nu_x)_{x\in\Om}$. 
By Young measure representation and (\ref{eqn:to_strict_2}), we infer that
\[
\lim_{j\to\infty}\int_{\Om}|\na w_j(x)|^2dx = 
\int_{\Om}\langle\nu_x, |\cdot |^2\rangle dx = \int_{\Om}|\na u(x)|^2 dx
\]
and, therefore, $\na w_j\to\na u$ strongly in $L^2(\Om,\Rd)$.
But then $\nu_x = \delta_{\na u(x)}$ a.e.~in $\Om$ by 
e.g.~\cite[Proposition 6.12]{Pedregal_book}.

We can now conclude the proof. Again by \cite[Proposition 6.12]{Pedregal_book}, 
equation (\ref{eqn:to_strict_3}) implies that, up to a subsequence, 
$\{1_{B_j}\na u_j\}$ converges to $\na u$ pointwise a.e.~in $\Om$.
Also, by Lemma~\ref{lemma:bj} (i), we have that, up to a further subsequence and for a.e.~$x\in\Om$,
$1_{B_j^c}(x)\na u_j(x)\to0$, so that
\[
\na u_j(x)=1_{B_j}(x)\na u_j(x)+1_{B_j^c}(x)\na u_j(x)\to\na u(x).
\]
Hence, in view of Remark~\ref{remark:p-equiintegrable}, 
an application of Vitali's convergence theorem concludes the proof of the proposition.
\end{proof}


\section{Appendix}


\subsection{Relaxation results}
In this section we present a version of Theorem~\ref{thm:AF} below, 
suitable for our purposes.

\begin{theorem}[Statement III.7, \cite{AcerbiFusco}]\label{thm:AF}
  Let $1\leq q \leq +\infty$ and let
$f:\Rd \to \RR$ be a continuous function
satisfying
\begin{align*}
 & 0\leq f(F) \leq c(1+|F|^q),\quad {\rm if}\,\, q\in [1,\infty)\\
  &f\,\, \mbox{is locally bounded}, \quad\qquad{\rm if}\,\,  q = \infty .
\end{align*}
Define
\begin{equation*}
  \mathscr I (u) := \int_\Omega f(\nabla u)dx,\qquad
  \overline{\mathscr I} (u) := \int_\Omega f^{qc}(\nabla u)dx,
\end{equation*}
where $f^{qc}$ is the quasiconvexification of $f$.
Then $\overline{\mathscr I}$ is 
the seq. w.l.s.c. (w*.l.s.c.) envelope of $\mathscr I$ in $W^{1,q}(\Om,\RR^d)$ ($W^{1,\infty}(\Om,\RR^d)$)
if $q<\infty$ ($q=\infty$).
\end{theorem}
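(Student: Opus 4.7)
My plan is to prove this classical Acerbi--Fusco relaxation theorem by establishing the two defining properties of the sequentially weakly lower semicontinuous (resp.\ weakly* lower semicontinuous) envelope: first, that $\overline{\mathscr I}\leq\mathscr I$ and $\overline{\mathscr I}$ is itself s.w.l.s.c.\ (resp.\ w*.l.s.c.) in $W^{1,q}(\Om,\RR^d)$; second, that every $u$ in the relevant space admits a recovery sequence $u_j\rightharpoonup u$ (resp.\ $u_j\stackrel{*}{\rightharpoonup} u$) with $\mathscr I(u_j)\to\overline{\mathscr I}(u)$.

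Part one is the routine half. The inequality $\overline{\mathscr I}\leq\mathscr I$ is immediate from $f^{qc}\leq f$. For lower semicontinuity I would observe that $f^{qc}$ is quasiconvex by construction and that it inherits the bound $0\leq f^{qc}(F)\leq c(1+|F|^q)$ from the corresponding bound on $f$ (since $0\leq f^{qc}\leq f$). Morrey's theorem then yields s.w.l.s.c.\ of $\int_\Om f^{qc}(\na u)\,dx$ in $W^{1,q}$ for $q\in(1,\infty)$; the endpoint $q=1$ requires the finer linear-growth result for quasiconvex integrands, while for $q=\infty$ local boundedness of $f^{qc}$ together with the uniform gradient bound for weak* convergent sequences delivers w*.l.s.c.\ on $W^{1,\infty}$.

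The main obstacle is part two, the construction of the recovery sequence, which I would perform via three nested approximation steps. First, by the density of Lipschitz maps in $W^{1,q}$ together with the strong continuity of $\overline{\mathscr I}$ on bounded subsets of $W^{1,q}$ (a consequence of the $q$-growth of $f^{qc}$ and dominated convergence, in the same spirit as the estimate opening the $\Gamma$-$\limsup$ step of Theorem~\ref{thm:gammaconvergence}), I would reduce to $u\in W^{1,\infty}$. Second, every such $u$ can be approximated in $W^{1,\infty}$-norm by maps that are affine on the cells of a fine polyhedral partition of $\Om$, and the same strong-continuity argument carries the energies through. Third, on each cell $\omega$ where $u(x)=Fx+b$, the definition
\[
f^{qc}(F)=\inf\Bigl\{\tfrac{1}{|\omega|}\int_\omega f(F+\na\varphi)\,dx:\varphi\in W_0^{1,\infty}(\omega,\RR^d)\Bigr\}
\]
yields, for each $\eta>0$, a corrector $\varphi_\eta$ with average energy at most $f^{qc}(F)+\eta$. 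Rescaling $\varphi_\eta$ on a fine periodic subdivision of $\omega$ produces a sequence of correctors vanishing weakly (weakly*) in $W^{1,q}(\omega,\RR^d)$ while preserving the energy average. The zero boundary condition on each $\varphi_\eta$ permits gluing across the cells of the partition without destroying the piecewise affine boundary data, and a final diagonal extraction in the partition scale, the Lipschitz approximation index, and $\eta\downarrow 0$ produces the required recovery sequence. The subtle technical point will be keeping the $q$-growth bounds effective throughout these rescalings and diagonalizations so that the resulting sequence converges in the correct topology of $W^{1,q}$ (resp.\ $W^{1,\infty}$), and then invoking part one to get that the energies cannot exceed $\overline{\mathscr I}(u)$ in the limit.
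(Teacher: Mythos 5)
The paper does not prove Theorem~\ref{thm:AF}: it is quoted verbatim (hence the label ``[Statement~III.7, \cite{AcerbiFusco}]'') from Acerbi and Fusco's 1984 paper and used as a black-box ingredient in the Appendix, where the paper's own original contribution is Theorem~\ref{thm:relaxation} (the extension to prescribed boundary data). There is therefore no internal proof to compare your attempt against.

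That said, your sketch follows the standard Acerbi--Fusco strategy: show $\overline{\mathscr I}\leq\mathscr I$ and lower semicontinuity of $\overline{\mathscr I}$, then build a recovery sequence by reducing to piecewise affine data, using the infimum characterization of $f^{qc}$ on each affine piece, gluing via the zero boundary values of the correctors, rescaling periodically to produce weakly vanishing oscillations, and diagonalizing. Two places are a little loose. First, piecewise affine maps are not dense in $W^{1,\infty}$-norm; they are dense in $W^{1,q}$ for finite $q$, and for $q=\infty$ you would instead approximate with a uniform Lipschitz bound so that gradients converge a.e., then pass through $\overline{\mathscr I}$ by dominated convergence --- the approximation is in weak* topology, not in norm. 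Your second reduction step should be phrased accordingly. Second, for $q=1$ lower semicontinuity of quasiconvex integrands with linear growth is delicate because of concentration effects, and the attribution ``Morrey's theorem'' is not quite right: the relevant result under $p$-growth is Acerbi--Fusco's own semicontinuity theorem. You flag the $q=1$ subtlety but do not resolve it; in the cited paper it is handled carefully. Neither issue is fatal, and the overall skeleton is the correct one from the literature.
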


With the aim of including boundary data, 
we extend Theorem \ref{thm:AF} in the following way:

\begin{theorem}
  \label{thm:relaxation}
Let $\Omega$ be a bounded domain with Lipschitz boundary and let
\begin{equation*}
  \label{eq:F}
  \mathscr{F}(u) = \left\{    \begin{array}{ll}
      \int_\Omega f(\nabla u)dx, & u \in W_h^{1,q}\\
+\infty, & u \in W^{1,q}\setminus W_h^{1,q},
    \end{array} \right.
\end{equation*}
for $1\leq q\leq \infty$, and $W^{1,q}_h$ defined as in \eqref{Wph}.
Suppose that $0\leq f(F) \leq c(1+|F|^q)$, for every $F\in \Rd$
when $q<\infty$ and $f$ is locally bounded if $q=\infty$. Then the 
seq. w.l.s.c. envelope of $\mathscr F$ in $W^{1,q}$ (weak* for $q=\infty$) is
\begin{equation*}
  \label{eq:Freldef}
\overline{  \mathscr{F}}(u) := \left\{    \begin{array}{ll}
      \int_\Omega f^{qc}(\nabla u)dx, & u \in W_h^{1,q}\\
+\infty, & u \in W^{1,q}\setminus W_h^{1,q}.
    \end{array} \right.
\end{equation*}
\end{theorem}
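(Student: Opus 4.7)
The plan is to verify the two standard inequalities that characterize the seq. weak (weak*) l.s.c. envelope of $\mathscr F$: namely (i) $\overline{\mathscr F}$ is itself seq. w.l.s.c. (w*.l.s.c.) in $W^{1,q}$, and (ii) every $u\in W^{1,q}_h$ admits a recovery sequence $\{u_k\}\subset W^{1,q}_h$ with $u_k\rightharpoonup u$ (weak* if $q=\infty$) and $\limsup_k\int_\Om f(\na u_k)\,dx\leq\int_\Om f^{qc}(\na u)\,dx$.

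For (i) I would first note that the trace operator $T:W^{1,q}(\Om,\RR^d)\to L^q(\pa\Om,\RR^d)$ is linear and continuous, hence weakly (weak*) continuous, so that $W^{1,q}_h$ is sequentially weakly (weak*) closed in $W^{1,q}(\Om,\RR^d)$. Given $u_k\rightharpoonup u$ with $\liminf_k\overline{\mathscr F}(u_k)<\infty$, one may therefore pass to a subsequence with $u_k\in W^{1,q}_h$, conclude $u\in W^{1,q}_h$, and then apply Theorem~\ref{thm:AF} to the quasiconvex integrand $f^{qc}$ (which is nonnegative and inherits the growth/local boundedness of $f$) to obtain
\[
\int_\Om f^{qc}(\na u)\,dx\leq\liminf_{k\to\infty}\int_\Om f^{qc}(\na u_k)\,dx=\liminf_{k\to\infty}\overline{\mathscr F}(u_k).
\]

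For (ii) the only nontrivial case is $u\in W^{1,q}_h$. Theorem~\ref{thm:AF} yields $v_k\rightharpoonup u$ in $W^{1,q}$ with $\int_\Om f(\na v_k)\,dx\to\int_\Om f^{qc}(\na u)\,dx$, but in general $Tv_k\neq Th$ on $\pa_D\Om$. To correct the boundary values I plan to use a De Giorgi cutoff/slicing argument: fix $\delta>0$, subdivide the collar $C_\delta:=\{x\in\Om:\dist(x,\pa_D\Om)<\delta\}$ into $N$ disjoint layers $L_1,\dots,L_N$ of thickness $\delta/N$ (the Lipschitz regularity of $\pa_D\Om$ in $\pa\Om$ ensures such layers and cutoffs are constructible), and for each $i$ pick a Lipschitz cutoff $\varphi_i\in[0,1]$ equal to $1$ on the interior side of $L_i$ and to $0$ on the boundary side, with $|\na\varphi_i|\leq CN/\delta$. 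Setting $w_k^i:=\varphi_i v_k+(1-\varphi_i)u\in W^{1,q}_h$, the bound $f(F)\leq c(1+|F|^q)$ gives for $q<\infty$
\[
\int_\Om f(\na w_k^i)\,dx\leq\int_\Om f(\na v_k)\,dx+c\!\int_{C_\delta}(1+|\na u|^q)\,dx+c\!\int_{L_i}\!\bigl(1+|\na v_k|^q+|\na u|^q+(N/\delta)^q|v_k-u|^q\bigr)dx.
\]
Averaging over $i=1,\dots,N$, the $W^{1,q}$-boundedness of $\{v_k\}$ yields an index $i(k)$ for which the integral over $L_{i(k)}$ of the first three integrands is $O(1/N)$ uniformly in $k$, while the compact embedding $W^{1,q}(\Om)\hookrightarrow L^q(\Om)$ sends the fourth contribution to zero at \emph{fixed} $N,\delta$ as $k\to\infty$; the case $q=\infty$ is simpler, since $v_k\to u$ uniformly by Arzelà--Ascoli and the transition term is controlled directly by the local boundedness of $f$ without any averaging.

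A diagonal choice $N=N(k)\to\infty$, $\delta=\delta(k)\to 0$ selected after $k$ is large enough to force $(N/\delta)^q\int_{C_\delta}|v_k-u|^q\,dx$ small then produces $u_k:=w_k^{i(k)}\in W^{1,q}_h$ with $\limsup_k\int_\Om f(\na u_k)\,dx\leq\int_\Om f^{qc}(\na u)\,dx$ (using $\int_{C_\delta}(1+|\na u|^q)\,dx\to 0$ as $\delta\to 0$); the resulting uniform $W^{1,q}$-bound and the identity $u_k-u=\varphi_{i(k)}(v_k-u)\to 0$ in $L^q$ give $u_k\rightharpoonup u$ in $W^{1,q}$ after a further subsequence. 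The main obstacle is precisely the balancing of the three competing scales in this diagonal extraction: $|\na\varphi_i|=N/\delta$ blows up both when $N\to\infty$ and when $\delta\to 0$, so the rates $N(k),\delta(k)$ and the $L^q$-convergence speed of $v_k\to u$ must be interlocked carefully. The pigeonhole-style averaging over $N$ disjoint layers is the standard device that makes the ``bulk'' transition-region contributions vanish uniformly in $k$, leaving the delicate $|v_k-u|^q$ piece to be dealt with layer-by-layer via compactness.
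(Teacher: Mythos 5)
Your proposal is correct and uses essentially the same strategy as the paper: the lower bound is read off from Acerbi--Fusco (Theorem~\ref{thm:AF}) together with the weak (weak*) closedness of $W^{1,q}_h$, and the recovery sequence is built by first invoking Theorem~\ref{thm:AF} to get a relaxing sequence, then correcting the boundary data with a convex-combination cutoff $\varphi v_k + (1-\varphi)u$, using a pigeonhole/De Giorgi slicing argument in the transition region and a final diagonal extraction in $k$, $N$, $\delta$. The only difference is presentational: the paper carries out the slicing ``for simplicity'' in the case $\Om = B_r$ with concentric annuli of thickness $j^{-2}$, whereas you set up the collar and layers around $\pa_D\Om$ for a general Lipschitz domain directly; the key steps (equiboundedness of $\{v_k\}$ controlling the averaged layer contribution, $L^q$-compactness killing the $(N/\delta)^q|v_k-u|^q$ term at fixed $N,\delta$, and the final interlocked choice of scales) match the paper's argument.
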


\begin{proof}
First assume $q\in[1,\infty)$.
By \cite{AcerbiFusco}, 
we know that $\overline{\mathscr F}$ is seq. w.l.s.c. in $W^{1,q}$
and $\overline{\mathscr F}\leq \mathscr F$.
Thus, for any $u_j \rightharpoonup u$ in $W^{1,q}$,
\begin{equation*}
 \overline{\mathscr F}(u) \leq \liminf_{j\to\infty}
\overline{\mathscr F}(u_j) \leq \liminf_{j\to\infty} \mathscr F(u_j).
\end{equation*}
It remains to prove that there exists $u_j\rightharpoonup u$ in $W^{1,q}$ such that
$$\overline{\mathscr F}(u)\geq\limsup_{j\to\infty}\mathscr F(u_j).$$ 
For simplicity, we prove the case where $\Omega = B_r$ is the
ball of radius $r$, centered at the origin. 
By Theorem~\ref{thm:AF}, for any $u\in W_h^{1,q}$ there exists a sequence
$\{w_j\}\in W^{1,q}(\Omega,\RR^d)$ such that
\begin{equation*}
  \label{eq:Ibar}
\overline{ \mathscr F}(u) = \lim_{j\to\infty}\int_{\Om}f(\na w_j)dx.
\end{equation*}
We need to modify the sequence $\{w_j\}$ to account for the boundary data. 
In order to do this, for $0<s<s+\ep<r$, let $\vph$ be a cut-off between $B_s$ and $B_{s+\ep}$, i.e.
$\vph \geq 0$, $\vph \equiv 1$ on $B_s$ and $\vph \equiv 0$ on $B_{s+\ep}$,
and $|\nabla\vph|\leq 1/\ep$. In what follows, the parameters $s$ and $\ep$ are chosen to depend on $j$. Define
\begin{equation*}
  u_j:= \vph w_j + (1-\vph)u,
\end{equation*}
so that $\{u_j \}\subset W_h^{1,q}$ and $u_j \rightharpoonup u$. Then,
\begin{align}
  \int_{B_r}f(\nabla u_j) &= \int_{B_s}f(\nabla w_j) +\int_{B_r\setminus B_{s+\ep}}
f(\nabla u) +  \int_{B_{s+\ep}\setminus B_s}f(\nabla u_j)\nonumber \\
& \leq \int_{B_s}f(\nabla w_j) +\int_{B_r\setminus B_{s+\ep}}f(\nabla u)
+  c\int_{B_{s+\ep}\setminus B_s}\left( 1+|\nabla u_j|^q\right).\label{qgrowth}
\end{align}
From the definition of $u_j$, 
$\nabla u_j = (w_j-u)\otimes \nabla \vph + (\nabla w_j - \nabla u) \vph$,
so that
\begin{equation*}
  |\nabla u_j|^q \leq c\left( \frac{1}{\ep^q}|w_j-u|^q + |\nabla w_j|^q + |\nabla u|^q \right),
\end{equation*}
and in turn
\begin{align}
  \int_{B_r}f(\nabla u_j) \leq& \int_{B_r}f(\nabla w_j) + 
\int_{B_r\setminus B_{s+\ep}}f(\nabla u)+\nonumber \\
&  c \int_{B_{s+\ep}\setminus B_s}\left(1+|\nabla w_j|^q +|\nabla u|^q\right) 
+  c \int_{B_{s+\ep}\setminus B_s}\frac{1}{\ep^q}|w_j-u|^q .\label{est1}
\end{align}
Since, up to a subsequence,
\[
\int_{B_r}|w_j-u|^q \leq \frac{1}{j^{2q+1}},
\]
choosing $\ep = \ep_j = j^{-2}$, then
\begin{equation*}
 \int_{B_{s+\ep_j}\setminus B_s}\frac{1}{\ep_j^q}|w_j-u|^q \leq \frac1j .
\end{equation*}
The choice of this subsequence, as well as of $\ep_j$, will become clear later.
Regarding the second-to-last term in \eqref{est1}, note that 
$\exists c>0$ such that
\[
\int_{B_r}\left(1+|\nabla w_j|^q +|\nabla u|^q\right) \leq c, 
\qquad \mbox{for all } j.
\]
In particular, 
\[
\sum_{i=0}^{j-1}\int_{B_{r-\frac{i}{j^2}}\setminus B_{r-\frac{i+1}{j^2}}}
\left(1+|\nabla w_j|^q +|\nabla u|^q\right)
\leq c,
\]
therefore, for every $j$ there exists $i_j$ such that 
\begin{equation*}
  \label{Sk}
\int_{B_{r-\frac{i_j}{j^2}}\setminus B_{r-\frac{i_j+1}{j^2}}} \left(1+|\nabla w_j|^q +|\nabla u|^q\right) \leq \frac cj.
\end{equation*}
Choosing $s_j = r-\frac{i_j+1}{j^2}$, then
\begin{equation}
\label{eq:eksk}
 s_j + \ep_j = r - \frac{i_j}{j^2}
\end{equation}
and \eqref{est1} becomes
\begin{equation*}
  \int_{B_r}f(\nabla u_j) \leq \int_{B_r}f(\nabla w_j) + 
\int_{B_r\setminus B_{s_j+\ep_j}}f(\nabla u)+ \frac cj.
\end{equation*}
Since $0\leq i_j\leq j-1$ for each $j$, then $i_j/j^2 \to 0$ and in turn from \eqref{eq:eksk}
\[
\left| B_r\setminus B_{s_j + \ep_j}\right|\to 0 ,
\qquad {\rm as}\,\, j\to \infty.
\]
Hence,
\begin{equation*}
\limsup_{j\to\infty}\mathscr F(u_j) =  \limsup_{j\to\infty}\int_{B_r}f(\nabla u_j) \leq
  \lim_{j\to\infty}\int_{B_r}f(\nabla w_j) =\overline{\mathscr F}(u).
\end{equation*}
This completes the proof for $q<\infty$. 

When $q=\infty$ one can argue in the same way but 
we do not have the inequality in \eqref{qgrowth}.
However, $\{u_j\}$ is bounded in $W^{1,\infty}_h$ and 
the sequence $\{f(\nabla u_j)\}$ is bounded in $L^{\infty}(\Omega)$.
Then, with the
same choices for $\ep_j$ and $s_j$, we obtain that
\[
\int_{B_{s_j+\ep_j}\setminus B_{s_j}}f(\nabla u_j)\leq c
\left| B_{s_j+\ep_j}\setminus B_{s_j}\right|\to 0.
\]
\end{proof}

\subsection{Quasiconvexity on linear strains}\label{subsection:quasi}
We recall that, for $1\leq q\leq\infty$, a locally bounded and 
Borel measurable function $f:\RR^{m\times n} \to \RR$ 
is \emph{$W^{1,q}$-quasiconvex} if 
\begin{equation*}
  f(F) \leq \fint_U f(F+\nabla \varphi)dx,\qquad \mbox{for all } \varphi
  \in W^{1,q}_0(U; \RR^m),\,\, F \in \RR^{m\times n},
\end{equation*}
where $U \subset \RR^n$ is bounded and open, and 
$\fint_U := \frac{1}{|U|}\int_U$. This definition is independent of the choice of $U$.
As it is common in the literature, we 
refer to $W^{1,\infty}$-quasiconvexity as \emph{quasiconvexity}.
The \emph{quasiconvexification} 
of a function $f:\RR^{m\times n} \to \RR$ is defined as
\begin{equation}
\label{eq:qset}
  f^{qc}:= \sup\{g:\RR^{m\times n} \to \RR \,: \, 
  g\leq f, \, g \, \mbox{is quasiconvex}\},
\end{equation}
with the convention that $f^{qc} \equiv -\infty$ if the above set is empty. 
Recall that for every locally bounded and Borel measurable $f:\RR^{m\times n} \to \RR$,
\begin{equation}
\label{eq:dac}
f^{qc}(\xi) = \inf_{\varphi \in W_0^{1,\infty}(U,\RR^m)}  \fint_{U}
f(\xi + \nabla \varphi) \, dx,
\end{equation}
whenever $U \subset \RR^n$ is a bounded and open set with $|\partial U|=0$. 
For a proof of this characterization, in the case where the set in \eqref{eq:qset} is nonempty, 
see \cite[Theorem 6.9]{Dacorogna}). 
Otherwise, we refer the reader to \cite[Theorem 4.5]{Muller_notes}.

A function $f :\Rds \to \RR$, is \emph{quasiconvex on linear
strains} if for every $E\in \Rds$
\begin{equation*}
  f(E) \leq \fint_U f(E+e(\varphi))dx,\qquad\mbox{for every }\varphi
  \in W^{1,\infty}_0(U, \RR^d),
\end{equation*}
where $U\subset \RR^d$ is open and bounded, 
and $e(\varphi) := {\rm sym} (\na \varphi)$. 
We recall that, as for quasiconvexity, this 
definition does not depend on $U$.
For  $f :\Rds \to \RR$, the \emph{quasiconvexification
on linear strains} is defined as
\begin{equation*}
  f^{qce}:= \sup\{g:\Rds \to \RR \, : \, g\leq f, \, g \,
  \mbox{is quasiconvex on linear strains}\}.
\end{equation*}
In what follows we collect some known results 
concerning the notion of quasiconvexity on linear strains.

\begin{lemma}
\label{lem:qcqce}
Let $V:\Rds \to \RR$ be locally bounded and Borel measurable, and
define $f:\Rd\to \RR$ by $f(F): = V({\rm sym}\,F)$.
Then $f$ is quasiconvex if and only if $V$
is quasiconvex on linear strains. 
\end{lemma}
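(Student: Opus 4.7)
The plan is to prove both implications directly from the definitions, using only the identity $f(F) = V(\mathrm{sym}\,F)$ and the observation that $\mathrm{sym}(F + \nabla\varphi) = \mathrm{sym}\,F + e(\varphi)$ for any $\varphi \in W^{1,\infty}_0(U,\RR^d)$.

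For the forward direction, I would assume $f$ is quasiconvex and fix $E \in \Rds$ and $\varphi \in W^{1,\infty}_0(U,\RR^d)$. Since $E$ is symmetric, $V(E) = f(E)$, and since $\mathrm{sym}(E + \nabla\varphi) = E + e(\varphi)$, we have $f(E + \nabla\varphi) = V(E + e(\varphi))$. Applying the quasiconvexity of $f$ at the matrix $E \in \Rd$ gives
\begin{equation*}
V(E) = f(E) \leq \fint_U f(E + \nabla\varphi)\,dx = \fint_U V(E + e(\varphi))\,dx,
\end{equation*}
which is quasiconvexity of $V$ on linear strains.

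For the reverse direction, I would assume $V$ is quasiconvex on linear strains and fix an arbitrary $F \in \Rd$ together with $\varphi \in W^{1,\infty}_0(U,\RR^d)$. Setting $E := \mathrm{sym}\,F \in \Rds$, the same two identities give $f(F) = V(E)$ and $f(F + \nabla\varphi) = V(E + e(\varphi))$, so that the quasiconvexity of $V$ on linear strains applied at $E$ yields
\begin{equation*}
f(F) = V(E) \leq \fint_U V(E + e(\varphi))\,dx = \fint_U f(F + \nabla\varphi)\,dx,
\end{equation*}
establishing that $f$ is quasiconvex.

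There is no real obstacle here; the only point worth being careful about is that quasiconvexity of $f$ is required for \emph{all} $F \in \Rd$, not only symmetric ones, so one must check that the symmetric test gradients $e(\varphi)$ appearing in the definition of quasiconvexity on linear strains suffice. This is automatic because $f$ is blind to the skew part of its argument, so testing against arbitrary $\nabla\varphi$ produces the same inequality as testing against its symmetric part $e(\varphi)$. Local boundedness and Borel measurability of $V$ transfer automatically to $f$ (and vice versa) through the continuous linear map $F \mapsto \mathrm{sym}\,F$, so no measurability issues arise.
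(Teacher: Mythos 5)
Your proof is correct and is precisely the direct verification from the definitions that the paper leaves implicit (the paper's proof simply states that the result ``follows immediately from the definitions''). Both directions are carried out cleanly by using $f(F) = V(\mathrm{sym}\,F)$ together with $\mathrm{sym}(F+\nabla\varphi) = \mathrm{sym}\,F + e(\varphi)$, and the closing remark correctly identifies why arbitrary $F\in\Rd$ and arbitrary gradient perturbations cause no trouble.
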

\begin{proof}
The proof follows immediately from the definitions.
\end{proof}
\begin{proposition}
\label{prop:qcqce}
Under the hypotheses of Lemma \ref{lem:qcqce}, 
$f^{qc}(F) = V^{qce}({\rm sym}\,F)$, for every $F\in \Rd$.
\end{proposition}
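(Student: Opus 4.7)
The plan is to prove two inequalities, using as the central observation that $f^{qc}$ depends only on the symmetric part of its argument.

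First I would show $V^{qce}(\mathrm{sym}\,F) \leq f^{qc}(F)$. Define $G : \Rd \to \RR$ by $G(F) := V^{qce}(\mathrm{sym}\,F)$. Since $V^{qce}$ is quasiconvex on linear strains by construction, Lemma \ref{lem:qcqce} gives that $G$ is quasiconvex. Moreover $V^{qce} \leq V$ implies $G(F) = V^{qce}(\mathrm{sym}\,F) \leq V(\mathrm{sym}\,F) = f(F)$. As $f^{qc}$ is the pointwise supremum of all quasiconvex functions bounded above by $f$, we conclude $G \leq f^{qc}$, i.e., $V^{qce}(\mathrm{sym}\,F) \leq f^{qc}(F)$.

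For the reverse inequality, the key step is to observe that $f^{qc}$ depends only on $\mathrm{sym}\,F$. Using the representation \eqref{eq:dac} on any bounded open $U \subset \RR^d$ with $|\partial U| = 0$,
\begin{equation*}
f^{qc}(F) = \inf_{\varphi \in W^{1,\infty}_0(U,\RR^d)} \fint_U f(F + \na\varphi)\,dx
= \inf_{\varphi \in W^{1,\infty}_0(U,\RR^d)} \fint_U V(\mathrm{sym}\,F + e(\varphi))\,dx,
\end{equation*}
since $\mathrm{sym}(F+\na\varphi) = \mathrm{sym}\,F + e(\varphi)$. The right-hand side depends on $F$ only through $\mathrm{sym}\,F$, so $f^{qc}(F) = f^{qc}(\mathrm{sym}\,F)$ for every $F \in \Rd$.

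With this, I would define $H : \Rds \to \RR$ by $H(E) := f^{qc}(E)$ (viewing $E \in \Rds \subset \Rd$). The factorization $f^{qc}(F) = H(\mathrm{sym}\,F)$ together with the fact that $f^{qc}$ is quasiconvex allows Lemma \ref{lem:qcqce} to be applied in reverse, yielding that $H$ is quasiconvex on linear strains. Since $H(E) = f^{qc}(E) \leq f(E) = V(\mathrm{sym}\,E) = V(E)$ for $E \in \Rds$, the maximality in the definition of $V^{qce}$ gives $H \leq V^{qce}$, hence $f^{qc}(F) = H(\mathrm{sym}\,F) \leq V^{qce}(\mathrm{sym}\,F)$, completing the proof.

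The only delicate point is invoking Lemma \ref{lem:qcqce} ``in reverse'' to pass quasiconvexity from $f^{qc}$ to $H$; this requires $H$ to be well-defined on $\Rds$, which is ensured precisely by the symmetry-only dependence established via \eqref{eq:dac}. One should also note that the characterization \eqref{eq:dac} applies since $V$ (and hence $f$) is locally bounded and Borel measurable by hypothesis.
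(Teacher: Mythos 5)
Your proof is correct and follows essentially the same approach as the paper's: the first inequality is identical, and in the second direction your function $H = f^{qc}|_{\Rds}$ coincides with the paper's auxiliary function $h(E) = \inf_\varphi \fint_U V(E + e(\varphi))\,dx$ (the paper defines it via the infimum formula and observes $h(\mathrm{sym}\,F) = f^{qc}(F)$, while you first extract the symmetry-only dependence of $f^{qc}$ from \eqref{eq:dac} and then define $H$ by restriction), with the same subsequent use of Lemma \ref{lem:qcqce} and the maximality of $V^{qce}$.
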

\begin{proof} 
By Lemma \ref{lem:qcqce}, $F\mapsto V^{qce}({\rm sym}\,F)$ is quasiconvex.
Also, $V^{qce}({\rm sym}\,F)\leq V({\rm sym}\,F) = f(F)$, so that by definition
$V^{qce}({\rm sym}\,F)\leq f^{qc}(F)$ for every $F\in\Rd$. 
Now, fix $U\subset \RR^d$ open and bounded and define
\begin{equation}
  \label{eq:h}
  h(E):=\inf_{\varphi \in W^{1,\infty}_0(U,\RR^d)} \fint_U V(E+e(\varphi))dx,\qquad E\in\Rds.
\end{equation}
Then by \eqref{eq:dac}
\begin{equation*}
  h({\rm sym}\, F) = \inf_{\varphi \in W^{1,\infty}_0(U,\RR^d)} \fint_U f(F+\nabla\varphi)dx 
= f^{qc}(F).
\end{equation*}
Therefore, by Lemma \ref{lem:qcqce}, $h$ is quasiconvex on linear strains.
Taking $\varphi \equiv 0$ in \eqref{eq:h},
$h \leq V$ and then $h\leq V^{qce}$. Hence,
\begin{equation*}
 f^{qc}(F) = h({\rm sym}\, F)\leq V^{qce}({\rm sym}\, F),\qquad\mbox{for every } F\in \Rd.
\end{equation*}
This concludes the proof.
\end{proof}

As an immediate consequence of Proposition~\ref{prop:qcqce}, we get the following corollary.

\begin{corollary}
  \label{cor:dac2}
 Let $V:\Rds \to \RR$ be locally bounded and Borel measurable.
Then for all $E\in \Rds$
\begin{equation*}
  V^{qce}(E) = \inf_{\varphi \in W^{1,\infty}_0 (U,\RR^d)} \fint_U V(E+e(\varphi))dx,
\end{equation*}
where $U\subset \RR^d$ is open and bounded. 
\end{corollary}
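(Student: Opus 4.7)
The plan is to read off the corollary directly from Proposition \ref{prop:qcqce} combined with the characterization \eqref{eq:dac} of the quasiconvexification. Since this result is an immediate consequence, the writing is mainly a matter of unpacking definitions carefully.

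First, I would fix $E\in \Rds$ and set $f(F):=V({\rm sym}\,F)$ for $F\in\Rd$, which is locally bounded and Borel measurable since $V$ is. Viewing $E$ as an element of $\Rd$, we have ${\rm sym}\,E = E$, so Proposition~\ref{prop:qcqce} yields
\begin{equation*}
V^{qce}(E) = V^{qce}({\rm sym}\,E) = f^{qc}(E).
\end{equation*}

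Next, I would apply the standard characterization \eqref{eq:dac} of the quasiconvex envelope to $f$ on the bounded open set $U\subset\RR^d$ to get
\begin{equation*}
f^{qc}(E) = \inf_{\varphi \in W^{1,\infty}_0(U,\RR^d)} \fint_U f(E + \nabla\varphi)\,dx
         = \inf_{\varphi \in W^{1,\infty}_0(U,\RR^d)} \fint_U V({\rm sym}(E + \nabla\varphi))\,dx.
\end{equation*}
Since ${\rm sym}(E+\nabla\varphi) = E + e(\varphi)$, combining the two displays gives exactly the claimed identity.

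The only subtle point concerns the hypothesis $|\partial U|=0$ needed to invoke \eqref{eq:dac}, but this is not a real obstacle: one can either assume this mild regularity of $U$ as is implicit in the analogous proof of Proposition~\ref{prop:qcqce}, or note that the value of the infimum on the right-hand side is independent of $U$ (by a standard covering/rescaling argument) and so the identity extends to arbitrary bounded open $U$. Thus the main (and only) work is the notational translation between ${\rm sym}\,\nabla\varphi$ and $e(\varphi)$; no new ideas beyond Proposition~\ref{prop:qcqce} are required.
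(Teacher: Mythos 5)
Your proof is correct and follows essentially the same route the paper intends: the right-hand side is exactly the auxiliary function $h$ introduced in the proof of Proposition~\ref{prop:qcqce}, and evaluating $V^{qce}({\rm sym}\,F)=f^{qc}(F)$ at $F=E\in\Rds$ (so that ${\rm sym}\,E=E$) together with the characterization \eqref{eq:dac} gives the identity. Your remark on the $|\partial U|=0$ hypothesis in \eqref{eq:dac} versus the corollary's bare assumption of a bounded open $U$ is a fair observation, correctly resolved by the $U$-independence of the infimum.
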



\bibliographystyle{plain}

\end{document}